\newcommand {\norm}[1] {\left\| #1 \right\|}
\newtheorem{theorem}{{\bf Theorem}}[section]
\theoremstyle{definition} \newtheorem{definition}[theorem]{\bf Definition}
\theoremstyle{plain} \newtheorem{lemma}[theorem]{Lemma}
\newtheorem{proposition}{Proposition}[section]
\newtheorem{remark}{Remark}[section]
\newcommand{\be}{\begin{eqnarray*}}
	\newcommand{\en}{\end{eqnarray*}}
\newcommand{\bes}{\begin{eqnarray}}
\newcommand{\ens}{\end{eqnarray}}
\newcommand{\al}{\alpha}
\newcommand{\la}{\lambda}
\def\nn{\nonumber}
\def\bq{\begin{equation}}
\def\eq{\end{equation}}
\def\bqq{\begin{eqnarray*}}
	\def\eqq{\end{eqnarray*}}
\title[Final value problem]{Existence and regularity results for terminal  value problem for nonlinear   fractional wave  equations  }
\author[N.H. Tuan]{Nguyen Huy Tuan}
\address[N.H. Tuan]{Department of Mathematics and Computer Science, University of Science-VNUHCM, 227 Nguyen Van Cu Str., Dist. 5, Ho Chi Minh City, Viet Nam}
\email{nhtuan@hcmus.edu.vn}
	\author[T. Caraballo] {Tom\'as  Caraballo}
\address{Departamento de Ecuaciones Diferenciales y Análisis Numérico C/ Tarfia s/n, Facultad de Matemáticas, Universidad de Sevilla, Sevilla 41012, Spain}
\email{caraball@us.es}
\author[T.B. Ngoc]{Tran Bao Ngoc}
\address{Institute of Research and Development, Duy Tan University, Da Nang 550000, Vietnam}%
\email{tranbaongoc3@duytan.edu.vn}
\author[Y. Zhou] {Yong Zhou }
\address{Faculty of Mathematics and Computational Science, Xiangtan University, Xiangtan,  P.R. China}%
\email{yzhou@xtu.edu.cn }
\begin{document}
	
	\begin{abstract}
	We consider the terminal value problem (or called final value problem, initial inverse problem, backward in time problem) of determining the initial value, in a general class of time-fractional wave equations with Caputo derivative, from a given final value.  We are concerned with the existence, regularity of solutions upon the terminal value. Under several assumptions on the nonlinearity, we address and show the well-posedness (namely, the existence, uniqueness, and   continuous dependence) for the terminal value problem.  Some regularity results for the mild solution and its derivatives of first and fractional orders are also derived. The effectiveness of our
methods are showed by applying the results to two interesting models:  Time fractional Ginzburg-Landau equation, and Time fractional Burgers equation, where time and spatial regularity estimates are obtained.

\vspace*{0.2cm}

{\bf Keywords:} fractional derivatives and integrals, Caputo fractional derivative, terminal value problem,  time fractional wave equation, well–posedness, regularity estimates.   \\[2mm]
{\bf 2010 MSC:} 26A33, 35B65, 35R11	
	\end{abstract}
	\maketitle
 
	\section{Introduction} \thispagestyle{empty}

\subsection{Statement of the problem}
As we know, the derivatives of positive integer orders of a differentiable function are determined by its properties only in an infinitesimal neighborhood of the considered point. As a result, partial differential equations with integer-order derivatives cannot describe processes with memory. The fact that fractional calculus is a powerful tool for describing the effects of power-law memory. If an integer-order derivative is replaced by a fractional one, typically Caputo, or   Riemann-Liouville, Grunwald, Letnikov,  Weyl derivatives, then we have time-fractional PDEs.   Historically, the Riemann-Liouville and Caputo fractional derivatives are the most important ones.  Time-fractional differential equations have recently become a topic of active research, and they also have many applications for modeling physics situations or describing a wide class of processes with memory, such as transport theory, viscoelasticity, rheology, and non-Markovian stochastic processes. 

\vspace*{0.1cm}

In this paper, we consider the following  fractional wave  equation  
\begin{align}
\hspace*{1.18cm} \left\{ \hspace*{0.1cm} \begin{array}{lllllllcccccccc}
\displaystyle \frac{\partial^{\alpha}}{\partial t^\alpha}  u(x,t)   &=& - \mathcal{A}u(x,t)+  G(t,u(x,t))  ,&&  x\in \Omega, \,\,\, 0<t\le T, \vspace*{0.1cm} \\
\hspace*{0.45cm} u(x,t)  &=& 0,  &&  x\in \partial \Omega, 0<t<T, \vspace*{0.1cm} \\
\hspace*{0.45cm} u_t(x,0)  &=& 0, &&  x\in  \Omega, \hspace*{0.23cm} 0<t<T,  
\end{array}\right. 	\label{mainprob1}
\end{align} 
where $G$ is called a source function which will be defined later. 
The time fractional derivative   $\partial^\alpha/\partial t^\alpha$,  $1 <\alpha < 2$,  is understood as {\it the left–sided
	Caputo fractional derivative of order $\alpha$} with respect to $t$, which is defined by
$$
\displaystyle \frac{\partial^{\alpha}}{\partial t^\alpha} v(t) = \frac{1}{\Gamma(2-\alpha )} \int_0^t \frac{\partial^{2}}{\partial s^2} v(s)  (t-s)^{1-\alpha }   ds, 
$$
whereupon  $\Gamma$ is the Gamma function.   For $\alpha=2$, we recovery the usual time derivative of second order $\displaystyle \partial^{2}/\partial t^2  $.  Caputo
derivatives have been used recently to model fractional diffusion in plasma turbulence,
see \cite{Ca}. Another advantage of using   Caputo derivatives in modeling physical problems
is that   Caputo derivatives of constant functions are zero. 
In Problem \eqref{mainprob1},  let us assume that  $\Omega$  
is a nonempty open set and possesses a Lipschitz continuous boundary in $\mathbb{R}^N$, $N\ge 1$, $T>0$,  and let   $\mathcal A$  be a symmetric and uniformly elliptic operator on $\Omega$  defined by 
\begin{align*}
\mathcal{A} v(x) =  - \sum_{m=1}^N \frac{\partial }{\partial x_m} \left( \sum_{n=1}^N a _{mn}(x) \frac{\partial}{\partial x_n}v(x) \right) + q (x)v(x),\quad x\in\overline{\Omega},
\end{align*} where  $a _{ij}\in C^1\left(\overline{\Omega}\right)$,  $q\in C\left(\overline{\Omega};[0,+\infty)\right)$, and $\displaystyle a_{mn}=a_{nm}, 1\le m,n\le N$. We assume also that there exists a constant $b_0>0$ such that, for  $x\in \overline{\Omega}$, $y=(y_1,y_2,...,y_N)\in \mathbb{R}^N$,  
$$\displaystyle \sum_{1\le m,n\le N}   a _{mn}(x)  y_m y_n \ge b_0 |y|^2.$$

This paper  considers the initial inverse problem of determining the initial value  $ u(x,0)= u_0(x)$ from its final value  $ u(x,T)$.  We focus to study existence, uniqueness and regularity   of  mild solutions of Problem \eqref{mainprob1}  associated with the final value condition 
\begin{align} \label{mainprob4}
u(x,T)=f(x),  \quad  x\in \Omega ,   
\end{align}
where  $f$ belongs to an  appropriate  space.  

\vspace*{0.1cm}

In the study of \eqref{mainprob1}, we are mainly motivated by problems arising in  anomalous diffusion phenomena. Anomalous diffusion and wave equations are great interest in physics. They are frequently
used for the super-diffusive models of anomalous diffusion such as diffusion in heterogeneous media. These fractional differential equations have another important issue in the probability theory related to non-Markovian diffusion processes with memory.  Fractional wave equations also describe evolution processes intermediate between diffusion and wave propagation \cite{Mai1,Mai2,Mai}. In \cite{Mai},  it has been shown that the fractional wave equation governs the propagation of mechanical diffusion-waves in viscoelastic media. Such waves are relevant in acoustics, seismology, medical imaging, etc.
The physical background for a time-space fractional diffusion-wave equation can be seen in \cite{Chen}.

\subsection{Motivations}
If   condition \eqref{mainprob4} is replaced  by
\begin{align} \label{mainprob44}
u(x,0)=\overline f(x),  \quad  x\in \Omega ,   
\end{align}
then we receive the {\it direct problem}  or {\it initial
	value problem  (IVP)}  of   \eqref{mainprob1}.  Some  quasi-linear equations of the form  \eqref{mainprob1} and \eqref{mainprob44} with standard time derivative $(\alpha=2)$ have been
extensively studied in literature and several results.  A global well-posedness has been
proved both in the subcritical case 
by Ginibre and Velo \cite{3}, and in the critical case, 
by Grillarkis \cite{5},  Shatah and Struwe \cite{22,23}, and some references therein.

\vspace*{0.1cm}

In fractional derivative cases, such as  Caputo or Riemann–Liouville derivatives, Problem  \eqref{mainprob1} and \eqref{mainprob44}  have been considered with  $G=0$ or $G=G(x,t)$ by some authors see,  e.g. \cite{Samko,Dong,Podlubny,Lax,Brezis,Diethelm} and also \cite{r2,r3,r4}.  

\vspace*{0.1cm}

Concerning to Problem \eqref{mainprob1} and \eqref{mainprob44}  with derivative $\partial^\alpha/\partial t^\alpha$ for $0<\al<1$, some authors developed and obtained    interesting results. A. Carvaho et  al.  \cite{Andrade}   established a local theory of mild solutions   where $\mathcal A$ is
a sectorial (nonpositive) operator.  	B.H. Guswanto \cite{Gu} studied the existence and uniqueness of a local
mild solution to a class of initial value problems for nonlinear fractional evolution equations. Besides, The existence, uniqueness and regularity of solutions  are established in some previous works see, for instant,  \cite{Nochetto,Yamamoto1,Yamamoto2}.  
Such research is rapidly developing, and here we do not intend to give any comprehensive lists
of references.  

\vspace*{0.1cm}

In our knowledge, studying the initial value problem for the fractional wave equations  in the nonlinear case are still limited.  Recently,   M. Yamamoto et al  \cite{Yamamoto2} have studied Problem \eqref{mainprob1} and (\ref{mainprob44}) with a linear inhomogeneous source, i.e.  $G=G(x,t)$,  and then further investigated   local solutions with a nonlinear source.  Very recently, M. Warma et al \cite{Al} considered the existence and regularity of local and global weak solutions with a suitable growth assumption on the nonlinearity $G$.
 Except for 	 
the works \cite{Yamamoto2,Pi,Al},  there are very few results on Problem   \eqref{mainprob1}-\eqref{mainprob4} to the best of our knowledge. 

\vspace*{0.1cm}

 In practice, there are some physical models which are not subjected to  initial value problems.  Some  phenomena cannot be observed at the time $t=0$, and only can be measured at a terminal time $t=T>0$.  Hence, a final value condition  appears  instead of the respectively initial value one.  It has great importance in engineering areas and aimed at detecting the previous state of a physical field from its present information. In a few sentences, we explain the  presence of the equation $u_t(x,0)=0$. By \cite{Yang},   the system  (\ref{mainprob1})-(\ref{mainprob4}) in the 2-dimensional case can be considered as a description for an imaging process, namely, to recover an  exact picture from its blurry form.
 The condition  $u_t(x,0)=0$ means that the distribution does not change on the interval $(0,t_0)$ with $t_0$ is near zero.
 So, the necessary of studying {\it terminal value problems} or {\it  final value problems (FVPs)} or {\it backward problems} are certain. 

\vspace*{0.1cm}

The     final value problem   (\ref{mainprob1})-(\ref{mainprob4}) with {\it  derivatives of integer-orders } has been  treated for a long time, e.g., see \cite{Showater,Carasso,Baumeister}.  In  \cite{Carasso}, A. Carasso et al. considered the following final value problem for the traditional wave equation  (i.e. $\alpha =2$)        
$$
\left\{\begin{array}{lllllllccccccc}
u_{tt} &=& (\Delta  + k)^2 u, && x\in \Omega, \hspace*{0.35cm} 0<t<T,\\
u = \Delta u &=& 0, && x\in \partial\Omega, ~ 0<t<T,\\
u_t(x,0)&=&g(x), && x\in \Omega,\\
u(x,T)&=&f(x), && x\in \Omega, 
\end{array} \right.
$$
where $k$ is a given positive number which may  equal several eigenvalues
of $-\Delta $, and $f$, $g$ are given functions.
For now, little
research has been done on the inverse problems of time-space fractional diffusion equations. FVPs for fractional PDEs can be roughly divided into two topics.
 The first one contains problems that are to study the ill-posedness and propose some regularization methods for approximating a  sought solution. 
We can list some well-known results, for example, J. Jia et al \cite{Jia},  J. Liu et al \cite{Liu}, some papers of M. Yamamoto and his group see \cite{Ya5,Ya6,Ya8,Ya9,Ya10},   B. Kaltenbacher et al \cite{Bar,Bar1},   W. Rundell et al \cite{Run,Run1},    J. Janno see \cite{Ja1,Ja2}, etc.
The second topic contains problems that are to study the existence and regularity of solutions such as \cite{NHTuanN,Luc}. Investigating the existence and regularity of solutions of  ODE/PDE models plays an important role  in both the development of the ODE/PDE theory and its applications in real-life problems. Furthermore, studying regularity helps to improve the smoothness and stability of solutions in    different spaces, and hence makes the numerical simulation valuable. The second topic   has been not well treated in the literature. \\

As far as we know, there are few works to analyze Problem \eqref{mainprob1}-\eqref{mainprob4} which provides existence, uniqueness results, and
  regularity estimates.  The main difficulty in the analysis of Problem (\ref{mainprob1})-(\ref{mainprob4}) and the essential difference from the traditional problems come by the nonlocality of the time-fractional derivative $\partial^\alpha/\partial t^\alpha$. The major question for this work in our mind that 

\vspace*{0.1cm}
  
{\it  What is the regularity of the corresponding solution $u$ (output data) if the given  data (input data) $f,G$  are regular?}

\vspace*{0.1cm}

Our goal   in this paper  is to  find     suitable Banach spaces  for the given data $(f,G)$ in order to obtain existence and  regularity results for the corresponding solution. The regularity estimates are important in the analysis of time discretization schemes for Problems (\ref{mainprob1})-(\ref{mainprob4}) in the future.

\vspace*{0.1cm}

The difficulties of a final value problem can be briefly described as follows (see Remark \ref{Diff} for more details). Firstly, since the fractional derivative $\displaystyle \partial_t^{\,\alpha} $ is non-locally defined on the time interval $(0,t)$, we cannot convert a final value problem to an initial-value problem by using some substitution methods. Secondly, the formulation of mild solutions of a final value problem is more complex than the initial problem respectively. This positively promotes us to construct new solution techniques  to deal with Problem (\ref{mainprob1})-(\ref{mainprob4}). Some more details can be found in Subsection 3.1, where the 
explicit representation of solutions relies on the eigenfunctions expansion and the Mittag-Leffler functions.   details,  
\vspace*{0.2cm}

Let us describe the main results of this paper in two cases as follows. The first case is related to the properties of solutions under a globally Lipschitz (GL) assumption on the nonlinearity corresponding to two first theorems, while the second one concerns a critical nonlinearity  corresponding to the third theorem. 
 In the first theorem, we obtain the regularity results of solutions and its derivatives of first and fractional orders under the (GL) assumptions $\mathscr H_1$ (see page 7). 
The key idea of its proof is based on a Picard iteration argument and techniques of finding appropriate spaces for $f$. Choosing spaces of $f$ and $G$ is a difficult and nontrivial task when we study the regularity of the solution. Although applications of our problem under $\mathscr H_1$ are not wide,  
the analysis and techniques here are helpful tools for studying the next results. Moreover,  the existence of a mild solution in the space $L^\infty$ may not obtain by considering $\mathscr H_1$. This can be overcome by considering the (GL) assumption $\mathscr H_2$ of the nonlinearity which is presented in the second theorem. The third theorem uses the contraction mapping principle to prove the existence of a mild solution in the    critical case. 
As we know,  nonlinear PDEs with critical nonlinearities are an interesting topic. We  can mention this in \cite{Carvalho} and references therein. 
Studying the initial value problem for (\ref{mainprob1}) in the critical case is also a challenging problem, therefore investigating the regularity of the mild solution and its derivatives are very difficult. 

\subsection{Outline}
The outline of this paper is as follows. In Section 2, we introduce some terminology used throughout this work.  Moreover,  we obtain a precise representation of solutions by using Mittag-Leffler functions. In Section 3, we investigate the well-posedness, and regularity of a mild solution to  Problem (\ref{mainprob1})-(\ref{mainprob4}).  
Three main results on the existence, uniqueness (in some suitable class of functions),   regularity of the mild solution and its derivatives are proved under suitable assumptions on the terminal data and the nonlinearity. In Section 4, we apply the
theoretical results to some typically fractional diffusion: Time fractional Ginzburg-Landau equation and  Burgers  equation.  Finally, in Section 5, we provide full proofs to the main theorems established in Section 3.

\section{Preliminaries} 	
In this section we will recall some properties that will be useful for the study of the well posedness of  Problem (\ref{mainprob1})-(\ref{mainprob4}). We start by introducing some functional spaces. Then we will recall some properties of  Mittag-Leffler functions.
  Let  the operator $\mathcal{L}$ be considered on $L^2(\Omega)$ with respect to domain   $D(\mathcal{L})=    W_0^{1,2}(\Omega) \cap  W^{2,2}(\Omega),$ 
where $L^2(\Omega)$, $W_0^{1,2}(\Omega)$, $W^{2,2}(\Omega)$ are the usual Sobolev spaces. Then the spectrum of $ \mathcal{L}$ is a non-decreasing sequence of positive real numbers $\displaystyle \{ \lambda_j  \}_{j=1,2,...}$ satisfying 
$\lim_{j\to \infty}\lambda_j=\infty$. Moreover,  there exists a positive constant $c_{\mathcal L}$ such that
$ 
\lambda_j \ge c_ {\mathcal L} j^{2/d},
$
for all  $j\ge 1$, see \cite{Courant}.  
Let us denote by $\displaystyle  \{ \varphi_j  \}_{j=1,2,...} \subset D(\mathcal L)$ 
the set of eigenfunctions of $\mathcal{L}$, i.e., $\displaystyle \mathcal L  \varphi_j = \lambda_j \varphi_j$, and $\varphi_j=0$ on $\partial \Omega$, for all  $j\ge 1$. 
The sequence $\displaystyle  \{ \varphi_k  \}_{k=1,2,...}$ forms an orthonormal basis of $L^2(\Omega)$, see e.g.  \cite{Kato}. 
For a given real number $\gamma\ge 0$, the Hilber scale space 
$$
\displaystyle \mathbb H^{ \gamma}(\Omega): =\left\lbrace v 
\in L^2(\Omega): \sum_{j=1}^\infty   \lambda_j^{2\gamma}  \langle v,\varphi_j \rangle   ^2 <\infty \right\rbrace 
$$   
($\langle \cdot,\cdot \rangle$ is the usual product of $L^2(\Omega)$)	endowed with the norm 
$  \Vert v\Vert^2_{\mathbb H^{ \gamma}(\Omega)}:= \sum_{j=1}^\infty   \lambda_j^{2\gamma} \left\vert \langle v,\varphi_j \rangle \right\vert^2.$  
{ We have $\mathbb H^0(\Omega)=  L^2(\Omega)$ if $\gamma=0$,  and $\mathbb{H}^{\frac{1}{2}}(\Omega) = W_0^{1,2}(\Omega)$}. 
We denote by $\mathbb H^{-\gamma}(\Omega)$ the dual space of $\mathbb H^{\gamma}(\Omega)$ provided that the dual space of $  L^2({\Omega})$ is identified with itselt, e.g. see \cite{McLean}. The space  $\mathbb H^{-\gamma}(\Omega)$ is a Hilbert space with respect to the norm 
$ 
\norm{v}^2_{\mathbb H^{-\gamma}(\Omega)}= \sum_{j=1}^\infty   \lambda_j^{-2\gamma}   \langle v,\varphi_ j  \rangle _{-\gamma,\gamma}  ^2 ,  
$
for $v\in  \mathbb H^{-\gamma}(\Omega)$  where $\langle.,.\rangle_{-\gamma,\gamma}$ is the dual product between $\mathbb H^{-\gamma}(\Omega)$ and $\mathbb H^{\gamma}(\Omega)$. We note that 
\begin{align}
\langle\tilde{v},v\rangle_{-\gamma,\gamma}  =\langle \tilde{v},v \rangle , \quad \textrm{for } \tilde{v}\in   L^2(\Omega), v\in \mathbb H^{\gamma}(\Omega).\label{gg}
\end{align}  
For given numbers $p\ge 1$ and $\nu \in \mathbb{R}$, we let $ L^p (0,T;\mathbb H^\nu(\Omega))$ be the space of all functions $w:(0,T)\to \mathbb H^\nu(\Omega)$ such that  
$$
\Vert w\Vert_{ L^p (0,T; \mathbb H^\nu(\Omega))} := \left(\int_{0}^T \Vert w(t)\Vert_{\mathbb H^\nu(\Omega)}^p dt \right)^{1/p} <\infty.$$ 
Let us denote by $  C((0,T];\mathbb H^\nu (\Omega))$ the set of all continuous functions which map  $(0,T]$ into $\mathbb H^\nu(\Omega)$. For a given number $ \eta>0$, we denote by $\displaystyle  C^\eta((0,T];\mathbb H^\nu (\Omega))$ the space of all functions $w$ in $ C((0,T];\mathbb H^\nu (\Omega))$ such that
$ \Vert w\Vert_{ C^\eta((0,T];\mathbb H^\nu (\Omega))}:=\sup_{0<t\le T} t^\eta \Vert w(t)\Vert_{\mathbb H^\nu (\Omega)}<\infty , $    see \cite{Dang}.

\subsection{Fractional Sobolev spaces} We recall some Sobolev embeddings as follows. Let $\Omega$ be a nonempty open set with a Lipschitz continuous boundary in $\mathbb{R}^N$, $N\ge 1$. Let us recall that the notation $W^{s,p}(\Omega)$, $s\in \{0,1,2,...\}$, $p\ge 1$, refers to the standard Sobolev one, e.g. see \cite{Adams}. In the case $0\le s\le 1$ is  a positive real number, the intermediary space $W^{s,p}(\Omega)=\left[ L^p(\Omega);W^{1,p}(\Omega) \right]_s$ can be defined by 
\begin{align*}
W^{s,p}(\Omega)= \left\{ u\in L^p(\Omega): \frac{|u(x)-u(x')|}{|x-x'|^{\frac{N}{p}+s} } \in L^p(\Omega\times \Omega) \right\}. \nn
\end{align*}   
Since $\Omega$ 
is a nonempty open set and possesses a Lipschitz continuous boundary in $\mathbb{R}^N$, then the following Sobolev embedding  holds  
\begin{align}
\displaystyle W^{\sigma,p}(\Omega) \hookrightarrow W^{\gamma,q}(\Omega)  \qquad \textrm{if } \quad \left\{
\begin{array}{lllccc}
1 \le p , q < \infty , \vspace*{0.2cm} \\
0 \le  \gamma \le \sigma<\infty , \vspace*{0.2cm} \\
\displaystyle  \sigma-\gamma \ge \frac{N}{p} - \frac{N}{q}. \vspace*{0.2cm}
\end{array} \right. \label{nhung1}
\end{align} 
By letting $p=2$, $\gamma=0$  in  (\ref{nhung1}), one  obtains that  $ W^{\sigma,2}(\Omega)   \hookrightarrow L^q(\Omega)$ with $1\le q<\infty$,  $0\le \sigma <\infty$, and $\sigma \ge \frac{N}{2}-\frac{N}{q}$. Henceforth, setting $0\le \sigma <\frac{N}{2}$ infers that $1\le  q \le \frac{2N}{N-2\sigma}$. Summarily, we obtain the following embedding  
\begin{align}
\displaystyle W^{{\sigma_1},2}(\Omega)  \hookrightarrow L^{q_1}(\Omega)  \qquad \textrm{if } \quad  0\le {\sigma_1} <\frac{N}{2}, ~ 1\le  {q_1} \le \frac{2N}{N-2{\sigma_1}}. \label{nhung1a}
\end{align}  
This  {  implies   $W_0^{{\sigma_1},2}(\Omega)  \hookrightarrow L^{q_1}(\Omega)$, and so   	$L^{{q}_1^*}(\Omega) =\big[ L^{q_1}(\Omega) \big]^*  \hspace*{0.1cm} \hookrightarrow \big[ W_0^{{\sigma_1},2}(\Omega) \big]^*= W^{-{\sigma_1},2}(\Omega)$} with respect to $ -\frac{N}{2} < -{\sigma_1} \le 0$ and $ {q_1^*} \ge \left(\frac{2N}{N-2{\sigma_1}}\right) \left\{ \left(\frac{2N}{N-2{\sigma_1}}\right) -1 \right\}^{-1}= \frac{2N}{N+2{\sigma_1}}$. Thus, 
\begin{align}
\displaystyle  L^{q_2}(\Omega)  \hspace*{0.1cm} \hookrightarrow  W^{{\sigma_2},2}(\Omega)   \qquad \textrm{if } \quad  -\dfrac{N}{2} <  {\sigma_2} \le 0, ~ {q_2} \ge  \dfrac{2N}{N-2{\sigma_2}}.   \label{nhung1b}
\end{align}   

On the other hand,  the Hilbert scale spaces   and the fractional Sobolev space are related to each other by the following embeddings \vspace*{0.2cm}
\begin{align}
\mathbb{H}^s(\Omega) \hookrightarrow W^{2s,2}(\Omega) \hookrightarrow L^2(\Omega) ,\hspace*{0.08cm} \quad \textrm{if} \quad s\ge { 0 .}  \vspace*{0.2cm}  \label{HSem}
\end{align}

\subsection{On the Mittag-Leffler functions} 
An important function in the integral formula of solutions of many differential equations involving in the Caputo fractional derivatives is the
Mittag-Leffler function, which is defined by
$$\displaystyle E_{\alpha,\beta}(z)=\sum_{k=1}^{\infty} \frac{z^k}{\Gamma(\alpha k +\beta)}, \quad z\in \mathbb{C},$$
for $\alpha>0$ and $\beta\in \mathbb{R}$. It will be used to represent the solution of Problem (\ref{mainprob1})-(\ref{mainprob4}). We recall the  following lemmas, for which their proofs can be found in many books, e.g. see \cite{Samko,Podlubny,Diethelm}. The first one is very basic and also useful in many estimates of this paper. The second one  helps us to find the derivatives of first order  and  fractional order $\alpha $ of  the mild solution of  problem (\ref{mainprob1})-(\ref{mainprob4}). The third one is important and helps us to deal with the Mittag-Leffler function corresponding to the final time $T$. In the last one, we combine the first and  third ones to derive some more important estimates. The proof of  this lemma can be found in the Appendix.  In this paper, we always consider $T$ satisfies   assumption (\ref{Tassumption}) below. 

\begin{lemma} \label{MLineqna} Given $1<\alpha<2$ and $\beta \in \{1;\alpha\}$,   then there exist   positive constants $m_\alpha$ and ${ M}_\al$,   depending only on $\alpha$ such that 
	\begin{align*}
	\frac{m_\al}{1+t}  \le  | E_{\alpha,\beta}(-t) |  \le \frac{ {  M}_\al}{1+t}, \quad \textrm{ for all } t\ge 0.
	\end{align*} 
\end{lemma}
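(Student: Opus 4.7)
The plan is to establish the two sides of the inequality separately, and in each case to patch together a large-$t$ asymptotic analysis with a bounded-$t$ continuity argument. The main technical tool is the classical asymptotic expansion of $E_{\alpha,\beta}$ on the negative real axis (available in the references to fractional calculus collected in the preliminaries): for $0<\alpha<2$, $\beta\in\mathbb{R}$, and $t\to\infty$,
\[
E_{\alpha,\beta}(-t) \;=\; -\sum_{k=1}^{N} \frac{(-t)^{-k}}{\Gamma(\beta-\alpha k)} \;+\; O\!\left(t^{-N-1}\right).
\]

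For the upper bound, I would take $N=1$ in this expansion to obtain $|E_{\alpha,\beta}(-t)|\le C_1/t$ for all $t\ge t_0$, with $t_0$ and $C_1$ depending only on $\alpha$ and $\beta\in\{1,\alpha\}$. On the compact interval $[0,t_0]$, $E_{\alpha,\beta}$ is entire and therefore continuous and uniformly bounded by some $C_2$. Combining the two regimes produces $|E_{\alpha,\beta}(-t)|(1+t)\le M_\alpha$ uniformly in $t\ge 0$.

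For the lower bound I would use the same template in reverse. The $k=1$ coefficient of the expansion is $1/\Gamma(\beta-\alpha)$: when $\beta=1$, the argument $1-\alpha\in(-1,0)$ is not a pole of $\Gamma$, so $\Gamma(1-\alpha)\ne 0$ and the leading term furnishes $|E_{\alpha,1}(-t)|\,t\ge c_1>0$ for all $t\ge t_1$ sufficiently large. For $t\in[0,t_1]$, continuity of $t\mapsto E_{\alpha,1}(-t)$ together with its non-vanishing on that interval yields, by compactness, a positive lower bound $c_2$. Choosing $m_\alpha$ small enough then gives $m_\alpha/(1+t)\le |E_{\alpha,\beta}(-t)|$ for every $t\ge 0$.

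The main obstacle I anticipate is the lower bound, on two fronts. First, the case $\beta=\alpha$ is delicate because $\Gamma(\beta-\alpha)=\Gamma(0)$ is a pole, so the $k=1$ coefficient formally vanishes and the leading asymptotic contribution has to be extracted from the $k=2$ term; squeezing out a decay of the claimed order $1/(1+t)$ in this case appears to require either a refined integral-representation argument (a contour deformation on a Hankel path, tracking the non-trivial lower-order constants carefully) or an appeal to the restrictions imposed by (\ref{Tassumption}). Second, verifying that $E_{\alpha,\beta}(-t)$ does not vanish on the bounded interval used in the compactness step is nontrivial for $1<\alpha<2$, because unlike the completely monotonic range $0<\alpha\le 1$ one now has genuine oscillations (one even recovers $\cos\sqrt{t}$ in the limit $\alpha\to 2$); a careful sign/monotonicity analysis based on the integral representation of $E_{\alpha,\beta}$ on a Hankel contour will likely be needed here.
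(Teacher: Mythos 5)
Your upper-bound argument is sound and is essentially the standard one (the Hankel-contour asymptotics, as in Podlubny's Theorem 1.6); note that the paper itself offers no proof of this lemma but simply refers to \cite{Samko,Podlubny,Diethelm}, and those references establish exactly the bound $|E_{\alpha,\beta}(-t)|\le M_\alpha/(1+t)$ by the two-regime argument you describe. The genuine problem is the lower bound, and the two obstacles you flag at the end are not removable technicalities -- they are fatal to the statement as written. First, for $\beta=1$ observe that $\Gamma(1-\alpha)<0$ when $1<\alpha<2$, so your own leading asymptotic term gives $E_{\alpha,1}(-t)\sim t^{-1}/\Gamma(1-\alpha)<0$ as $t\to\infty$, whereas $E_{\alpha,1}(0)=1>0$; by the intermediate value theorem $E_{\alpha,1}(-t)$ vanishes at some finite $t_*>0$, and no positive $m_\alpha$ can satisfy $m_\alpha/(1+t_*)\le |E_{\alpha,1}(-t_*)|=0$. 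The compactness step ``continuity together with non-vanishing on that interval'' therefore cannot be carried out, because the non-vanishing is false for every $\alpha\in(1,2)$. Second, for $\beta=\alpha$ the $k=1$ coefficient $1/\Gamma(0)$ vanishes, so $E_{\alpha,\alpha}(-t)=O(t^{-2})$ as $t\to\infty$ (and is again eventually negative, since $\Gamma(-\alpha)>0$ on $(-2,-1)$, so it too has a real zero); a function decaying like $t^{-2}$ cannot be bounded below by $m_\alpha/(1+t)$ for large $t$, no matter how carefully the lower-order constants are tracked on the Hankel path.

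In short, only the upper half of the asserted two-sided inequality is provable; the lower half fails both at the real zeros of $E_{\alpha,\beta}(-\,\cdot\,)$ and, for $\beta=\alpha$, in its asymptotic order. This is consistent with how the results are actually used downstream: the estimates in Lemma \ref{nayvesom} and in Section 5 invoke only the upper bound, while a lower bound on $E_{\alpha,1}$ is needed only at the single argument $-\lambda_j T^{\alpha}$ and is supplied by the separate Lemma \ref{TTT}, whose hypothesis that $T$ be large enough exists precisely to step over the real zeros. So rather than attempting to complete the lower-bound half for all $t\ge 0$, you should prove the upper bound as you propose and restrict any lower bound to arguments bounded away from the zero set, which is all that is ever required.
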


\begin{lemma}\label{DeriML} Assume  $1<\alpha <2$, $\lambda>0$, and $t>0$. Then the following differentiation formula hold \vspace*{0.15cm}
	\begin{itemize}
		\item[a)] $\displaystyle \partial_t  E_{\alpha ,1}(-\lambda t^\alpha ) \hspace*{0.15cm}  = -\lambda t^{\alpha -1}  E_{\alpha ,\alpha  }(-\lambda t^\alpha ), \hspace*{-0.06cm} \quad  \textrm{and} \quad 
		\partial_t   (t^{\alpha -1}E_{\alpha ,\alpha  }(-\lambda t^\alpha )  ) \hspace*{0.15cm} = t^{\alpha -2}E_{\alpha ,\alpha -1}(-\lambda t^\alpha )$;
		
		\item[b)] $\partial_t ^{\,\alpha}     E_{\alpha ,1 }(-\lambda t^\alpha )  =  -\lambda E_{\alpha ,1}(-\lambda t^\alpha ),  
		\hspace*{1.05cm} \textrm{and} \quad
		\partial_t^{\,\alpha}     ( t^{\alpha -1}E_{\alpha ,\alpha  }(-\lambda t^\alpha )  )  =  -\lambda t^{\alpha -1}E_{\alpha ,\alpha  }(-\lambda t^\alpha )$. 
	\end{itemize}
\end{lemma}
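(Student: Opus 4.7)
My strategy is to work directly from the power-series definition $E_{\alpha,\beta}(z)=\sum_{k=0}^{\infty} z^k/\Gamma(\alpha k+\beta)$ with $z=-\lambda t^\alpha$, and differentiate the series term by term. The series and its formally-differentiated counterparts (ordinary and Caputo-fractional) converge absolutely and uniformly on compact subsets of $(0,\infty)$, which justifies interchanging differentiation with summation. Throughout I will use $\Gamma(z+1)=z\Gamma(z)$ to absorb the prefactors produced by differentiation back into the shape of a Mittag-Leffler series, followed by an index shift to recognize the closed form.

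For part (a), term-by-term differentiation of $E_{\alpha,1}(-\lambda t^\alpha)=\sum_{k\ge 0}(-\lambda)^k t^{\alpha k}/\Gamma(\alpha k+1)$ kills the $k=0$ constant term and produces $\sum_{k\ge 1}(-\lambda)^k\alpha k\,t^{\alpha k-1}/\Gamma(\alpha k+1)$; applying $\Gamma(\alpha k+1)=\alpha k\,\Gamma(\alpha k)$ and shifting $k\mapsto k+1$ rewrites this as $-\lambda t^{\alpha-1}E_{\alpha,\alpha}(-\lambda t^\alpha)$. For the second identity, I expand $t^{\alpha-1}E_{\alpha,\alpha}(-\lambda t^\alpha)=\sum_{k\ge 0}(-\lambda)^k t^{\alpha(k+1)-1}/\Gamma(\alpha(k+1))$, differentiate each $t^{\alpha(k+1)-1}$ in $t$, and use $(\alpha(k+1)-1)\Gamma(\alpha(k+1)-1)=\Gamma(\alpha(k+1))$ to recognize the resulting series as $t^{\alpha-2}E_{\alpha,\alpha-1}(-\lambda t^\alpha)$.

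For part (b), the key tool is the Caputo pure-power rule $\partial_t^\alpha t^\beta=\Gamma(\beta+1)\,t^{\beta-\alpha}/\Gamma(\beta+1-\alpha)$, valid whenever $\beta>1$, so that the defining integral $I^{2-\alpha}[(t^\beta)'']$ converges absolutely. For the exponents $\beta=\alpha k$ with $k\ge 1$ we have $\beta\ge\alpha>1$; applying the rule term-by-term to $E_{\alpha,1}(-\lambda t^\alpha)$, noting that the $k=0$ constant contributes zero to the Caputo derivative, and shifting $k\mapsto k-1$ yields exactly $-\lambda E_{\alpha,1}(-\lambda t^\alpha)$.

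The main obstacle is the second identity of (b): the $k=0$ term in the expansion of $t^{\alpha-1}E_{\alpha,\alpha}(-\lambda t^\alpha)$ is $t^{\alpha-1}/\Gamma(\alpha)$, whose exponent $\alpha-1$ lies in $(0,1)$, outside the range where the pure-power Caputo rule applies; indeed the classical Caputo $\alpha$-derivative of $t^{\alpha-1}$ alone does not exist as an absolutely convergent integral. I plan to circumvent this by a Laplace-transform argument: using the standard pair $\mathscr{L}\{t^{\alpha-1}E_{\alpha,\alpha}(-\lambda t^\alpha)\}(s)=(s^\alpha+\lambda)^{-1}$ together with the Laplace rule $\mathscr{L}\{\partial_t^\alpha g\}(s)=s^\alpha\mathscr{L}\{g\}(s)$ for a Caputo derivative applied to a function that vanishes at $0^+$, and then inverting the transform. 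Once this $k=0$ subtlety is dispatched, the remaining terms $k\ge 1$ are handled exactly as in the first identity of (b), and the same index-shift argument produces $-\lambda t^{\alpha-1}E_{\alpha,\alpha}(-\lambda t^\alpha)$.
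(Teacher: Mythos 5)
The paper itself offers no proof of this lemma; it is stated as a recalled fact with a pointer to the standard references, so the only meaningful comparison is with the textbook arguments. Your treatment of part (a) and of the first identity in part (b) is exactly that standard argument (termwise differentiation of the series, $\Gamma(z+1)=z\Gamma(z)$, index shift, and for the Caputo power rule the observation that the exponents $\alpha k$ with $k\ge 1$ exceed $1$ so the defining integral converges), and it is correct. One small point in your favour: the paper's displayed definition of $E_{\alpha,\beta}$ starts the sum at $k=1$, which is a typo (it would make $E_{\alpha,1}(0)=0$ and contradict Lemma \ref{MLineqna} at $t=0$); your use of the standard $k=0$ convention is the right reading.

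The gap is in your repair of the second identity of part (b). You correctly isolate the real difficulty (the $k=0$ term $t^{\alpha-1}/\Gamma(\alpha)$ has no classical Caputo derivative of order $\alpha\in(1,2)$, since $\int_0^t (t-s)^{1-\alpha}s^{\alpha-3}\,ds$ diverges), but the Laplace-transform argument you propose does not close it. For $1<\alpha<2$ the Caputo rule reads $\mathscr{L}\{\partial_t^{\alpha}g\}(s)=s^{\alpha}\hat g(s)-s^{\alpha-1}g(0)-s^{\alpha-2}g'(0)$, so reducing it to $s^{\alpha}\hat g(s)$ requires both $g(0)=0$ and $g'(0)=0$; for $g(t)=t^{\alpha-1}E_{\alpha,\alpha}(-\lambda t^{\alpha})$ one has $g'(t)=t^{\alpha-2}E_{\alpha,\alpha-1}(-\lambda t^{\alpha})\to+\infty$ as $t\to 0^+$, so the hypothesis fails. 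Worse, applying the rule formally gives $s^{\alpha}(s^{\alpha}+\lambda)^{-1}=1-\lambda(s^{\alpha}+\lambda)^{-1}$, whose inverse transform is $\delta(t)-\lambda t^{\alpha-1}E_{\alpha,\alpha}(-\lambda t^{\alpha})$ --- off from the claimed right-hand side by a Dirac mass, which is precisely the signature of the misapplied rule. The identity is in fact a Riemann--Liouville statement: under $D^{\alpha}_{RL}$ the offending term is killed because $D^{\alpha}_{RL}t^{\alpha-1}=\Gamma(\alpha)t^{-1}/\Gamma(0)=0$, and then your $k\ge 1$ index-shift argument goes through verbatim. Equivalently, and most cleanly, compute $\tfrac{d^2}{dt^2}I^{2-\alpha}g$ directly: termwise fractional integration gives $I^{2-\alpha}\bigl[t^{\alpha-1}E_{\alpha,\alpha}(-\lambda t^{\alpha})\bigr]=tE_{\alpha,2}(-\lambda t^{\alpha})$, whose first derivative is $E_{\alpha,1}(-\lambda t^{\alpha})$ and whose second derivative is, by your own part (a), $-\lambda t^{\alpha-1}E_{\alpha,\alpha}(-\lambda t^{\alpha})$. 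You should either prove the identity in that form or note explicitly that $\partial_t^{\alpha}$ acting on $t^{\alpha-1}E_{\alpha,\alpha}(-\lambda t^{\alpha})$ must be read in the Riemann--Liouville (or regularized) sense --- an imprecision the paper's statement itself glosses over.
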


\begin{lemma} [see \cite{Wei}] \label{TTT}
	Let $1<\alpha <2$. If the number $T$ is large enough then \begin{equation}
	E_{\alpha ,1}(-\lambda_j T^\alpha ) \ne 0, \quad \textrm{for all }  j\in\mathbb{N},  j\ge 1, \label{Tassumption}
	\end{equation} 
	and there exist two positive  constants $m_\al$,  and $M_\al$ such that
	\begin{equation}
	\frac{m_\al}{1+\lambda_j T^\alpha} \le  \Big|	E_{\alpha ,1}(-\lambda_j T^\alpha ) \Big| \le 	\frac{ M_\al}{1+\lambda_j T^\alpha}. \nn
	\end{equation} 
\end{lemma}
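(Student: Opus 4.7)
The plan is to combine the classical large-argument asymptotic expansion of the Mittag--Leffler function with the discreteness of the zero set of an entire function.

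First I would invoke the standard asymptotic expansion: for $1<\alpha<2$ and $x\to+\infty$,
\begin{equation*}
E_{\alpha,1}(-x) \;=\; -\frac{1}{\Gamma(1-\alpha)}\cdot\frac{1}{x} \;+\; O\!\left(\frac{1}{x^{2}}\right).
\end{equation*}
Because $1-\alpha\in(-1,0)$, one has $\Gamma(1-\alpha)<0$, so the leading coefficient $-1/\Gamma(1-\alpha)$ is strictly positive. Consequently there exists a threshold $X_\alpha>0$, depending only on $\alpha$, such that
\begin{equation*}
\frac{1}{2\,|\Gamma(1-\alpha)|}\cdot\frac{1}{x} \;\le\; E_{\alpha,1}(-x) \;\le\; \frac{2}{|\Gamma(1-\alpha)|}\cdot\frac{1}{x} \qquad\text{for all } x\ge X_\alpha.
\end{equation*}
In particular $E_{\alpha,1}(-x)>0$ for every $x\ge X_\alpha$, so every real positive zero of $E_{\alpha,1}(-\,\cdot\,)$ is confined to the bounded interval $[0,X_\alpha]$; being zeros of an entire function, they form a finite set $Z_\alpha$.

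Next I would pick $T_{0}:=(X_\alpha/\lambda_{1})^{1/\alpha}$. For any $T\ge T_{0}$ and any $j\ge 1$, monotonicity of the eigenvalue sequence gives $\lambda_{j}T^{\alpha}\ge\lambda_{1}T^{\alpha}\ge X_\alpha$, so in particular $\lambda_{j}T^{\alpha}\notin Z_\alpha$, which proves the non-vanishing assertion \eqref{Tassumption}. For the quantitative bounds, substitute $x=\lambda_{j}T^{\alpha}$ in the asymptotic estimate; since $x\ge X_\alpha>0$, one has the elementary equivalence $\tfrac{1}{1+x}\le\tfrac{1}{x}\le(1+X_\alpha^{-1})\tfrac{1}{1+x}$, so the $1/x$-bound is equivalent to the claimed $1/(1+\lambda_{j}T^{\alpha})$-bound, up to $\alpha$-dependent constants $m_\alpha,M_\alpha$. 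The upper bound is in any case already furnished by Lemma~\ref{MLineqna}.

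The main obstacle is pinning down the sign of the leading asymptotic coefficient and controlling the $O(x^{-2})$ remainder well enough to produce explicit constants; everything else is bookkeeping. It is worth emphasizing why the hypothesis that $T$ be large cannot be dropped: for $1<\alpha<2$ the finite set $Z_\alpha$ is generally nonempty (in sharp contrast to the regime $0<\alpha\le 1$, where $E_{\alpha,1}(-\,\cdot\,)$ is completely monotonic and strictly positive on $[0,\infty)$), so without enlarging $T$ one cannot exclude the possibility that some $\lambda_{j}T^{\alpha}$ accidentally hits a zero.
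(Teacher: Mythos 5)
The paper does not prove this lemma: it is quoted directly from the reference \cite{Wei}, so there is no internal proof to compare against. Your argument is the standard route and is structurally sound — the large-argument asymptotics of $E_{\alpha,1}$ give two-sided $1/x$ decay beyond a threshold $X_\alpha$, hence non-vanishing there; choosing $T$ with $\lambda_1T^{\alpha}\ge X_\alpha$ pushes every $\lambda_jT^{\alpha}$ past the threshold; and the passage from $1/x$ to $1/(1+x)$ is elementary. Your closing remark on why largeness of $T$ cannot be dropped is also right; in fact, since $E_{\alpha,1}(0)=1>0$ while the function is eventually negative (see below), the positive real zero set is provably nonempty for every $\alpha\in(1,2)$, not merely "generally" so.

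There is, however, a sign error in the key expansion. The asymptotic formula reads $E_{\alpha,\beta}(z)=-\sum_{k=1}^{p}z^{-k}/\Gamma(\beta-\alpha k)+O(|z|^{-p-1})$ near the negative real axis, so with $z=-x$ one gets
\begin{equation*}
E_{\alpha,1}(-x)=\frac{1}{\Gamma(1-\alpha)}\cdot\frac{1}{x}+O\!\left(\frac{1}{x^{2}}\right),
\end{equation*}
i.e.\ the leading coefficient is $+1/\Gamma(1-\alpha)$, which is \emph{negative} for $1<\alpha<2$ (sanity check at $\alpha=1/2$: $E_{1/2,1}(-x)=e^{x^{2}}\mathrm{erfc}(x)\sim 1/(\sqrt{\pi}\,x)$ and $\Gamma(1/2)=\sqrt{\pi}>0$, consistent with the $+$ sign). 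Consequently $E_{\alpha,1}(-x)<0$ for all $x\ge X_\alpha$, and your displayed two-sided bound is false as written: the middle term must be $|E_{\alpha,1}(-x)|$ (equivalently $-E_{\alpha,1}(-x)$), not $E_{\alpha,1}(-x)$. Because the lemma only concerns $\big|E_{\alpha,1}(-\lambda_jT^{\alpha})\big|$, this is a local repair — all the argument actually needs is that the leading coefficient is nonzero — but as submitted the inequality $\tfrac{1}{2|\Gamma(1-\alpha)|}\cdot\tfrac1x\le E_{\alpha,1}(-x)$ and the assertion $E_{\alpha,1}(-x)>0$ for $x\ge X_\alpha$ are incorrect and should be fixed.
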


\begin{lemma} \label{nayvesom} Given $1<\alpha<2$ and  $0 < \theta <1$, it holds  that \vspace*{0.2cm}
	\begin{itemize}
		\item[a)] $z^{\al-1} E_{\al, \al} (-\lambda_j z^\al) \le { M}_\al \lambda_j^{-\theta} z^{\al(1-\theta)-1}$;
		\item[b)] $\mathscr E_{\al, T} (-\lambda_j t^\al)  \le M_\al m_\al^{-1}  \left(\la_1^{-1}+  T^\alpha\right) \lambda_j^\theta t^{-\al(1-\theta)}$ with $\displaystyle \mathscr E_{\al, T} (-\lambda_j t^\al):= \frac{E_{\alpha ,1}(-\lambda_j t^\alpha )}{E_{\alpha ,1}(-\lambda_j  T^\alpha )}$.
	\end{itemize}
\end{lemma}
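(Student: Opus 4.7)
The plan is to reduce both inequalities to Lemma \ref{MLineqna} and Lemma \ref{TTT}, combined with the elementary fact that $1+x \ge x^{\mu}$ for every $x\ge 0$ and every $\mu\in[0,1]$ (since $x^{\mu}\le \max(1,x)\le 1+x$). This single inequality is the engine that converts the denominators $1+\lambda_j z^{\alpha}$ produced by the Mittag-Leffler bounds into the powers $\lambda_j^{\theta}$ and $t^{-\alpha(1-\theta)}$ that appear in the statement. The delicate point is choosing the right exponent to apply it with in each part.

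For part (a), I would apply Lemma \ref{MLineqna} with $\beta=\alpha$ to the argument $\lambda_j z^{\alpha}$, obtaining $E_{\alpha,\alpha}(-\lambda_j z^{\alpha})\le M_\alpha/(1+\lambda_j z^{\alpha})$. Then I would use the elementary inequality with exponent $\theta$ to write $(1+\lambda_j z^{\alpha})^{-1}\le (\lambda_j z^{\alpha})^{-\theta}=\lambda_j^{-\theta}z^{-\alpha\theta}$. Multiplying by $z^{\alpha-1}$ gives exactly $M_\alpha \lambda_j^{-\theta} z^{\alpha(1-\theta)-1}$.

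For part (b), I would apply Lemma \ref{MLineqna} with $\beta=1$ to bound the numerator, $E_{\alpha,1}(-\lambda_j t^{\alpha})\le M_\alpha/(1+\lambda_j t^{\alpha})$, and Lemma \ref{TTT} to bound the reciprocal of the denominator, $|E_{\alpha,1}(-\lambda_j T^{\alpha})|^{-1}\le m_\alpha^{-1}(1+\lambda_j T^{\alpha})$. Combining these yields
\begin{equation*}
\mathscr E_{\alpha,T}(-\lambda_j t^{\alpha}) \le \frac{M_\alpha}{m_\alpha}\cdot\frac{1+\lambda_j T^{\alpha}}{1+\lambda_j t^{\alpha}}.
\end{equation*}
Now I would handle the two factors separately: write $1+\lambda_j T^{\alpha}=\lambda_j(\lambda_j^{-1}+T^{\alpha})\le \lambda_j(\lambda_1^{-1}+T^{\alpha})$ using $\lambda_j\ge \lambda_1$, and apply the elementary inequality with exponent $1-\theta$ (this is the right choice so that the leftover power of $\lambda_j$ matches the statement) to obtain $(1+\lambda_j t^{\alpha})^{-1}\le (\lambda_j t^{\alpha})^{-(1-\theta)}=\lambda_j^{\theta-1}t^{-\alpha(1-\theta)}$. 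Multiplying the two factors produces $\lambda_j^{\theta}(\lambda_1^{-1}+T^{\alpha})t^{-\alpha(1-\theta)}$, and putting everything together yields the stated bound.

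The only real obstacle is bookkeeping: making sure that in (b) the exponent $1-\theta$ is used (not $\theta$) so that the extra $\lambda_j$ picked up from the crude estimate $1+\lambda_j T^{\alpha}\le \lambda_j(\lambda_1^{-1}+T^{\alpha})$ is exactly cancelled to leave $\lambda_j^{\theta}$. Beyond that, both parts are two-line computations.
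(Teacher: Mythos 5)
Your proof is correct and follows essentially the same route as the paper's (Appendix, part (AP.3.)): both parts reduce to Lemma \ref{MLineqna} and Lemma \ref{TTT} combined with the elementary bound $1+x\ge x^{\mu}$ for $x\ge 0$, $\mu\in[0,1]$. The only difference is the bookkeeping in part (b), where you factor $1+\lambda_j T^{\alpha}=\lambda_j(\lambda_j^{-1}+T^{\alpha})$ and apply the power inequality with exponent $1-\theta$ only to the denominator; this reproduces the stated constant $M_\alpha m_\alpha^{-1}(\lambda_1^{-1}+T^{\alpha})$ exactly, whereas the paper's own grouping of the ratio into $\theta$ and $1-\theta$ powers yields the comparable constant $M_\alpha m_\alpha^{-1}T^{\alpha(1-\theta)}(T^{\alpha\theta}+\lambda_1^{-\theta})$.
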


From now on,	we will use $a \lesssim b$ to denote the existence of a constant $C>0$, which may depend only on $\al, T$ such that $a \le C b.$

\section{Existence and regularity of the terminal value problem (\ref{mainprob1})-(\ref{mainprob4})}

\subsection{Mild solutions}
For considering   solutions of  partial differential equations, many  authors often study the formulation of   solutions in the  classical, weak, or mild  sense.  In this work, we will study mild solutions of FVP (\ref{mainprob1})-(\ref{mainprob4}). 

There are many works considering the precise formulation of mild solutions to IPVs for time fractional wave equations, such as \cite{Nochetto,Yamamoto1,Yamamoto2,Kian,Chen,Al,Mai1,Li,
Andrade}, by using complex integral representations on Banach spaces or  spectral representations on Hilbert scale spaces of the Mittag-Leffer operators. To study FVPs for time fractional wave equations, the precise formulation of mild solutions can be derived by using spectral representations of the inverse Mittag-Leffer operators, such as  \cite{NHTuanN,Run,Ya9,Ja1,Bar,wei,Jia,Liu,Nieto3}. In what follows, we state a definition of mild solutions to FVP (\ref{mainprob1})-(\ref{mainprob4}) where the precise formulation can be obtained by some simple computations.

Additionally, for a given two-variables function $w=w(x,t)$, we will write $w(t)$ instead of $w(.,t)$ and understand $w(t)$ as a function of the spatial variable $x$.   

\begin{definition}
	A function $u$ in $ L^p(0,T;\mathbb{H}^{\nu}(\Omega))$ or $   C^{\eta} (0,T; \mathbb{H}^{\nu}(\Omega))$ (with some suitable numbers $p\ge 1, \nu\ge 0$ or $\eta> 0$) is called a mild solution of Problem  \eqref{mainprob1}-\eqref{mainprob4} if 
	it satisfies the following equation
	\begin{align} \label{Lmildsolution}
	u(t)=  	   {\bf B}_\alpha(t,  T) f &+ \int_0^t   {\bf P}_\alpha  (t-r) G(r,u(r)) dr - \int_0^{ T}    {\bf B}_\alpha(t,  T)    { \bf  P}_{\alpha } (T-r) G(r,u(r)) dr   
	\end{align}
in the sense of $\mathbb{H}^\nu(\Omega)$,	where, for $0 \le  t\le T$, the solution operator $  {\bf B}_{\alpha }$, $ {\bf P}_{\alpha }$ are given by
	\begin{equation}
	\begin{aligned}
	\label{operatorAT}
	{\bf B}_\alpha(t,  T)  v := \sum_{j=1}^\infty \frac{E_{\alpha ,1}(-\lambda_j t^\alpha )}{E_{\alpha ,1}(-\lambda_j  T^\alpha )} \langle v,\varphi_j\rangle\varphi_j , \quad 
	{\bf P}_\alpha(t)  v:= \sum_{j=1}^\infty    t^{\alpha -1} E_{\alpha ,\alpha }(-\lambda_j t^\alpha )  \langle v,\varphi_j \rangle\varphi_j.  
	\end{aligned} 
	\end{equation}
	where $v= \sum_{j=1}^\infty \langle v,\varphi_j\rangle\varphi_j  $.
\end{definition}

%
\begin{remark} \label{Diff}   A mild formulation of this IVP (\ref{mainprob1}), (\ref{mainprob44}) is given by 
	\begin{align}  
 u(t)= \mathbf B_\alpha^{(0)}(t) \overline{f} &+ \int_0^t   {\bf P}_{\alpha }  (t-r) G(r,u(r)) dr ,   \label{khuyaa} 
	\end{align}
	where $\mathbf B_\alpha^{(0)}(t) w:= \sum_{j=1}^\infty    E_{\alpha ,1 }(-\lambda_j t^\alpha )  \langle w,\varphi_j \rangle\varphi_j$, see  \cite{Nochetto,Yamamoto1,Yamamoto2,Kian,Chen,Al,Mai1,Li,
Andrade}, etc.
	It should be pointed out some core differences between the IVP (\ref{mainprob1}), (\ref{mainprob44}), and the FVP (\ref{mainprob1})-(\ref{mainprob4}) for fractional wave equations as what follows  
	\begin{itemize}
		\item The solution operator ${\bf B}_\alpha(t,  T)$ is weaker than $\mathscr B_\alpha(t)$. Indeed, one can see that if $v\in L^2(\Omega)$ then $\mathscr B_\alpha(t)v\in L^\infty(0,T; L^2(\Omega))$ and $${\bf B}_\alpha(t,  T)v\notin  L^\infty(0,T; L^2(\Omega)) \cup C([0,T];L^2(\Omega)).$$  Therefore, it is actually difficult to establish the existence of mild solutions, especially in the critical nonlinear case; 
		\item  Mild formulation of the FVP (\ref{mainprob1})-(\ref{mainprob4}) contains more terms than the IVP (\ref{mainprob1}), (\ref{mainprob44}). In particular, estimating  the last term of (\ref{Lmildsolution}) requires very clever techniques in  acting ${\bf B}_\alpha(t,  T)$, ${\bf P}_\alpha  (t-r)$ on $G(r,u(r))$. In the critical nonlinear case, it is very difficult to determine where does the quantity $${\bf B}_\alpha(t,  T)  {\bf P}_\alpha  (t-r) G(r,u(r))$$
belongs?, and also how to bound this quantity such that its integration on the whole interval $(0,T)$ is convergent? 		
		
		\item The Gronwall's inequality can be applied when we estimate   solutions of the IVP (\ref{mainprob1}), (\ref{mainprob44}). However, it cannot when we estimate   solutions of the FVP (\ref{mainprob1})-(\ref{mainprob4}) since (\ref{Lmildsolution}) contains the integral on $(0,T)$.  
	\end{itemize}
	Hence,  studying  FVP (\ref{mainprob1})-(\ref{mainprob4})  is a difficult task. 
\end{remark}

\subsection{Well-posedness of Problem (\ref{mainprob1})-(\ref{mainprob4})  in the  globally Lipschitz case}

In this section, we   study the well-posedness of Problem (\ref{mainprob1})-(\ref{mainprob4}), and regularity of the solution which we consider the following globally  Lipschitz assumptions on  $G$: \vspace*{0.2cm}
\begin{itemize}
	\item[($\mathscr H_1$)] 
	The function	$G: [0,T] \times  \mathbb H^{\nu}({\Omega}) \to  \mathbb H^{\nu}({\Omega})$ satisfies    $G(t,0)=0$, and there exists a non-negative function $   L_1 \in L^\infty(0,T) $ such that
	\begin{align}
	\norm{G(t,w_1)-G(t,w_2)}_{  \mathbb  H^{\nu}({\Omega})} \le\,&   L_1(t) \norm{ w_1  - w_2}_{ \mathbb H^{\nu}({\Omega})}, \label{Lipschitz2}
	\end{align}
	for all $0\le t \le T$, and $w_1,w_2 \in \mathbb  H^{\nu}({\Omega})$. \vspace*{0.2cm}
	\item[($\mathscr H_2$)]    The function	$G:  [0,T] \times   C\left([0,T]; \mathbb H^\nu(\Omega)\right) \cap  L^q(0,T; \mathbb H^{\sigma}(\Omega)) \to H^{\nu+1}({\Omega}) $ satisfies    $G(t,0)=0$, and there exists a non-negative function $  L_2 \in L^\infty(0,T) $ such that
	\begin{align}
	\norm{G(t,w_1)-G(t,w_2)}_{   H^{\nu+1}({\Omega})} \le\,&   L_2  \Big\|  w_1-w_2\Big\|_{C\left([0,T]; \mathbb H^\nu(\Omega)\right) \cap  L^q(0,T; \mathbb H^{\sigma}(\Omega)) }, \label{Lipschitz2}
	\end{align}
	for all $0\le t \le T$, and $w_1,w_2\in    C\left([0,T]; \mathbb H^\nu(\Omega)\right) \cap   L^q(0,T; \mathbb H^{\sigma}(\Omega))$, where
	$$
	\Big\|  w_1-w_2\Big\|_{C\left([0,T]; \mathbb H^\nu(\Omega)\right) \cap  L^q(0,T; \mathbb H^{\sigma}(\Omega)) }= \|w_1-w_2\|_{  C\left([0,T];\mathbb H^\nu(\Omega)\right) }+\|v_1-v_2\|_{  L^q(0,T; \mathbb H^{\sigma}(\Omega))} , 
	$$
	and $\nu\ge 0$, $q\ge 1$, $\sigma\ge 0$.
\end{itemize}    \vspace*{0.2cm}

In order to establish our main results, it is useful to note that  $$0<\frac{\alpha-1}{\alpha}<\frac{1}{2}<\frac{1}{\alpha}<1,\quad \textrm{as}\quad 1<\alpha<2.$$
Besides, we   recall that the Sobolev embedding   
$ \mathbb H^{\nu+\theta}(\Omega) \hookrightarrow  \mathbb H^{\nu}(\Omega) $ holds as $\nu\ge 0$ and $\theta\ge 0$, so there exists a positive constant $C_1(\nu,\theta)$ such that
\begin{align}
\big\| v  \big\|_{\mathbb H^{\nu}(\Omega)} \le   C_1(\nu, \theta)   \big\|v \big\|_{\mathbb H^{\nu+\theta}(\Omega)}, \label{Embed1}
\end{align} 
for all $v \in \mathbb{H}^{\nu+\theta}(\Omega)$.  In addition, for the reader  convenience, the important constants (which may appear in some proofs)  are summarily  given by \textbf{(AP.4.)} in the Appendix. 

\vspace*{0.2cm}
  
The first result in  Theorem  \ref{t1}  ensures the existence of   a mild  solution in  $ L^p (0, T ;   \mathbb H^\nu({\Omega}))$ under  appropriate assumptions on $p$, the final value data $f$, and the assumption $(\mathscr H_1)$ on the nonlinearity $G$. The idea is to construct a Cauchy sequence in $ L^p (0, T ;   \mathbb H^\nu({\Omega}))$ which will bounded by a power function and must   converge  to a mild solution of Problem (\ref{mainprob1})-(\ref{mainprob4}). The solution is then bounded by the power function. After that, time continuity  and  spatial regularities can be consequently derived. Furthermore, we also discuss on the existence  of the derivatives $\partial_t$, $\partial_t^\alpha$ of the mild solution  in some appropriate  spaces. 
\begin{theorem} \label{t1}
	Assume that $f \in  \mathbb H^{\nu+\theta}(\Omega) $ and   $G$ sastisfies $(\mathscr H_1)$ such that  $  \| L_1\|_{L^\infty (0,T)} \in \big( 0; 	\mathscr M^{-1}_1  \big)$ with $\nu \ge 0$ and  $\theta$ satisfies that  $\displaystyle \frac{\alpha-1}{\alpha}<\theta<1$, where the constant $\mathscr M_1$ is given by \textbf{(AP.4)} in the Appendix. 	Then Problem (\ref{mainprob1})-(\ref{mainprob4}) has a unique mild solution $$ u \in    L^p (0,    T ;  \mathbb H^{\nu} (\Omega)) \cap C^{\alpha(1-\theta)} \big((0,T];\mathbb H^{\nu} (\Omega)\big)  ,$$ for all $p \in \left[1; \frac{1}{\alpha(1-\theta)}\right)$, which corresponds to the estimate 
	\begin{align}
	\left\| u(t) \right\|_{\mathbb H^{\nu} (\Omega)} \lesssim  t^{-\alpha(1-\theta)}  \|f \|_{   \mathbb H^{\nu+\theta}(\Omega) }. \label{vanphong}
	\end{align}
The following spatial and time  regularities also hold:   
	\begin{itemize}
		\item[a)] Let   $\theta'$   satisfy that  $\displaystyle \frac{\alpha-1}{\alpha}<\theta'\le \theta$. Then $ u\in \displaystyle     L^p (0,    T ;  \mathbb H^{\nu+\theta-\theta'} (\Omega)) $, for all $p \in \left[1, \frac{1}{\alpha(1-\theta')}\right)$, which corresponds to the estimate
		$$\left\| u(t) \right\|_{\mathbb H^{\nu+\theta-\theta'} (\Omega)} \lesssim  t^{-\alpha(1-\theta')}  \|f \|_{   \mathbb H^{\nu+\theta}(\Omega) }   .$$
		\item[b)] Let  $1-\theta < \nu' \le 2-\theta$. Then $u\in  C^{\,\eta_{glo}} \hspace*{-0.05cm} \left([0,T];\mathbb H^{\nu-\nu'} (\Omega)\right)$  and  
		$$\left\| u(\widetilde t)- u(t) \right\|_{\mathbb H^{\nu-\nu'} (\Omega)}      \lesssim (\widetilde t-t)^{\,\eta_{glo}} \|f \|_{   \mathbb H^{\nu+\theta}(\Omega) }. $$
		Here $\eta_{glo}$ is defined in the Appendix.
		
		\item[c)] Let    $\displaystyle 0 \le  \nu_1 \le \min \left\{ 1-\theta;~\frac{\alpha-1}{\alpha} \right\}$. Then $ \partial_t  u \in   L^p (0,    T ;  \mathbb H^{\nu-\nu_1-\frac{1}{\alpha}}(\Omega))$, for all $p \in \left[1; \frac{1}{\alpha(1-\theta-\nu_1)}\right)$, which corresponds to the estimate  
		\begin{align*}
		\Vert \partial_t 
		u(t)\Vert_{\mathbb H^{\nu-\nu_1-\frac{1}{\alpha}}(\Omega)}   \lesssim t^{- \alpha (1-{ \theta } -\nu_1  )} \|f\|_{\mathbb{H}^{\nu+\theta}(\Omega)} .
		\end{align*}	
		\item[d)]   Let     $\displaystyle \frac{\alpha-1}{\alpha}-\theta <  \nu_\alpha \le \min \left\{  \frac{1}{\alpha} -\theta;   \frac{\alpha-1}{\alpha} \right\}$, then $ \partial_t^{\alpha}  u \in   L^p (0,    T ;  \mathbb H^{\nu-\nu_\alpha-\frac{1}{\alpha}}(\Omega))$, for any  $p \in \left[1; \frac{1}{\alpha\min \left\{ (1-{ \theta } -\nu_\alpha  );  (1-\theta) \right\}}\right)$, which corresponds to the estimate  
		\begin{align}
		\Vert \partial_t^\alpha 
		u(t)\Vert_{\mathbb H^{\nu-\nu_\alpha-\frac{1}{\alpha}}(\Omega)}   
		\lesssim t^{- \alpha \min \left\{ (1-{ \theta } -\nu_\alpha  );  (1-\theta) \right\} } \|f\|_{\mathbb{H}^{\nu+\theta}} . \nn
		\end{align}
	\end{itemize}
 	The hidden constants  (as using the notation $\lesssim$)  depend  only on $\al, \nu, \theta, T$ in the inequality (\ref{vanphong}), on $\al, \nu, \theta, \theta', T$ in Part a, on $\al, \nu, \theta, \nu', T$ in Part b, on $\al, \nu, \theta, \nu_1, T$ in Part c, and on $\al, \nu, \theta, \nu_\alpha, T$ in Part d.  
\end{theorem}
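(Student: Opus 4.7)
The plan is to construct $u$ as a fixed point of the affine map $\Phi$ defined by the right-hand side of \eqref{Lmildsolution} in the weighted space
$$
\mathcal{X}_\theta := \Big\{ w\in C((0,T];\mathbb{H}^\nu(\Omega)) \,:\, \|w\|_{\mathcal{X}_\theta}:=\sup_{0<t\le T} t^{\alpha(1-\theta)} \|w(t)\|_{\mathbb{H}^\nu(\Omega)} < \infty \Big\},
$$
which is precisely $C^{\alpha(1-\theta)}((0,T];\mathbb{H}^\nu(\Omega))$. Acting term by term on the spectral expansion and invoking Lemma \ref{nayvesom}(b), I would first prove the smoothing bound $\|\mathbf{B}_\alpha(t,T)v\|_{\mathbb{H}^\nu(\Omega)} \lesssim t^{-\alpha(1-\theta)}\|v\|_{\mathbb{H}^{\nu+\theta}(\Omega)}$, so that $\mathbf{B}_\alpha(\cdot,T)f\in\mathcal{X}_\theta$. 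A direct application of Lemma \ref{MLineqna} yields the Volterra bound $\|\mathbf{P}_\alpha(s)w\|_{\mathbb{H}^\nu(\Omega)}\lesssim s^{\alpha-1}\|w\|_{\mathbb{H}^\nu(\Omega)}$, and for the compensating term I would combine Lemmas \ref{nayvesom}(a) and \ref{nayvesom}(b) so that the factor $\lambda_j^\theta$ extracted from the inverse Mittag--Leffler quotient cancels the $\lambda_j^{-\theta}$ extracted from $\mathbf{P}_\alpha$, producing
$$
\|\mathbf{B}_\alpha(t,T)\mathbf{P}_\alpha(T-r)w\|_{\mathbb{H}^\nu(\Omega)} \lesssim t^{-\alpha(1-\theta)}(T-r)^{\alpha(1-\theta)-1}\|w\|_{\mathbb{H}^\nu(\Omega)}.
$$

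Combining these three bounds with $(\mathscr H_1)$, the map $\Phi$ generates two Beta-type integrals
$$
\int_0^t(t-r)^{\alpha-1}r^{-\alpha(1-\theta)}dr=t^{\alpha\theta}B(\alpha,1-\alpha(1-\theta)),\quad \int_0^T(T-r)^{\alpha(1-\theta)-1}r^{-\alpha(1-\theta)}dr=B(\alpha(1-\theta),1-\alpha(1-\theta)),
$$
both finite since the standing hypothesis $\frac{\alpha-1}{\alpha}<\theta<1$ forces $0<\alpha(1-\theta)<1$. Multiplying by $t^{\alpha(1-\theta)}$ and taking the supremum, the Lipschitz constant of $\Phi$ on $\mathcal{X}_\theta$ takes the form $\mathscr{M}_1\|L_1\|_{L^\infty(0,T)}<1$. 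Banach's fixed-point theorem then delivers a unique $u\in\mathcal{X}_\theta$ with $\|u(t)\|_{\mathbb{H}^\nu(\Omega)}\lesssim t^{-\alpha(1-\theta)}\|f\|_{\mathbb{H}^{\nu+\theta}(\Omega)}$, and since $\alpha(1-\theta)<1$, the map $t\mapsto t^{-\alpha(1-\theta)}$ belongs to $L^p(0,T)$ exactly when $p<1/(\alpha(1-\theta))$, giving the claimed time integrability.

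For part~(a), I would replace the target norm $\mathbb{H}^\nu$ by $\mathbb{H}^{\nu+\theta-\theta'}$ in the three solution operator bounds and reuse exponent $\theta'$ in Lemma \ref{nayvesom}(b); the same estimates yield the $t^{-\alpha(1-\theta')}$ bound and the range $p<1/(\alpha(1-\theta'))$. For part~(b), I would write
$$
u(\widetilde t)-u(t)=\big[\mathbf{B}_\alpha(\widetilde t,T)-\mathbf{B}_\alpha(t,T)\big]\Big(f-\int_0^T \mathbf{P}_\alpha(T-r)G(r,u(r))dr\Big)+\int_t^{\widetilde t}\mathbf{P}_\alpha(\widetilde t-r)G\,dr+\int_0^t\big[\mathbf{P}_\alpha(\widetilde t-r)-\mathbf{P}_\alpha(t-r)\big]G\,dr,
$$
and bound each piece by applying the mean value theorem in time together with Lemma \ref{DeriML}(a) to extract the Hölder factor $(\widetilde t-t)^{\eta_{glo}}$; the loss of spectral weight caused by differentiation is absorbed into the shift $\mathbb{H}^\nu\to\mathbb{H}^{\nu-\nu'}$ via further applications of Lemma \ref{nayvesom}(a).

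Parts~(c) and~(d) follow by termwise differentiation of the series representation: Lemma \ref{DeriML}(a) produces an extra $\lambda_j t^{\alpha-1}$ for $\partial_t$, while Lemma \ref{DeriML}(b) produces an outright $\lambda_j$ for $\partial_t^\alpha$; in either case I trade the new spectral weight against the shift $\mathbb{H}^\nu\to\mathbb{H}^{\nu-\nu_1-1/\alpha}$ or $\mathbb{H}^\nu\to\mathbb{H}^{\nu-\nu_\alpha-1/\alpha}$ and rerun the Beta-function bookkeeping. The hardest step throughout is engineering the cancellation between the growing inverse Mittag--Leffler factor $\mathscr{E}_{\alpha,T}(-\lambda_j t^\alpha) \sim \lambda_j^\theta t^{-\alpha(1-\theta)}$ and the smoothing extracted from $\mathbf{P}_\alpha$: the single parameter $\theta$ must be chosen to make both Beta integrals converge, to keep the contraction constant strictly below one, and, in (c)--(d), to leave enough spectral smoothness after the extra factors of $\lambda_j^{1/\alpha}$ or $\lambda_j$ generated by time differentiation. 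Verifying that the exact constraints $\theta>(\alpha-1)/\alpha$, $\nu_1\le (\alpha-1)/\alpha$, and $\nu_\alpha\le (\alpha-1)/\alpha$ in the hypothesis list are precisely what these cancellations require is the central bookkeeping burden of the argument.
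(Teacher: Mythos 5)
Your proposal is correct in substance but takes a genuinely different functional-analytic route from the paper. You set up a Banach fixed-point argument directly in the weighted space $\mathcal X_\theta=C^{\alpha(1-\theta)}((0,T];\mathbb H^\nu(\Omega))$, whereas the paper runs a Picard iteration $w^{(k+1)}=\Phi w^{(k)}$ starting from $w^{(1)}=f$, proves the uniform pointwise bound $\|w^{(k)}(t)\|_{\mathbb H^\nu}\le \mathcal N_1 t^{-\alpha(1-\theta)}\|f\|_{\mathbb H^{\nu+\theta}}$ by induction (Lemma \ref{GSLVDanh}), shows the iterates form a Cauchy sequence in $L^p(0,T;\mathbb H^\nu)$ (Lemma \ref{GSLVDanhb}), passes to the limit via dominated convergence, and only afterwards establishes the weighted continuity through the $\mathfrak M_1$--$\mathfrak M_4$ decomposition. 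The core analytic content is identical in both routes: the smoothing bound for $\mathbf B_\alpha(t,T)$ from Lemma \ref{nayvesom}, the cancellation of $\lambda_j^{\theta}$ against $\lambda_j^{-\theta}$ in the composite $\mathbf B_\alpha(t,T)\mathbf P_\alpha(T-r)$, the two Beta integrals (both of which you compute correctly), and the smallness condition $\|L_1\|_{L^\infty}\mathscr M_1<1$. Your route is more economical --- it delivers existence, uniqueness in $C^{\alpha(1-\theta)}$, and the bound \eqref{vanphong} in one stroke, and is in fact the strategy the paper itself adopts for the critical case in Theorem \ref{locally1} --- while the paper's Picard scheme yields explicit iterates and an $L^p$-Cauchy structure that the authors reuse when identifying the limit as a mild solution. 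Two small caveats: (i) for the contraction to close in $\mathcal X_\theta$ you must verify that $\Phi$ maps continuous functions to continuous functions on $(0,T]$, which requires exactly the $\mathfrak M_1$--$\mathfrak M_4$ estimates you defer to part~(b), so those should logically precede the fixed-point step; (ii) in parts~(c)--(d) the phrase ``termwise differentiation of the series'' needs the justification the paper supplies via the spectral truncations $\mathcal P_M$ together with a Cauchy-sequence/dominated-convergence argument in $\mathbb H^{\nu-\nu_1-1/\alpha}(\Omega)$; without some such device the interchange of $\partial_t$ (or $\partial_t^\alpha$) with the infinite sum and the integrals is not yet licensed. Neither point is a gap in the idea, only in the bookkeeping you would need to write out.
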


{
\begin{remark} We note that, the inequality (\ref{vanphong}) also guarantees the continuous dependence of the solution on the final value $f$. In fact, if we denote $u(f)$ and $u(\widetilde{f})$ by the mild solutions of Problem (\ref{mainprob1})-(\ref{mainprob4}) corresponding to the final value $f$ and $\widetilde{f}$, then one can see that
\begin{align*}
\left\| u(f) - u(\widetilde{f}) \right\|_{C^{\alpha(1-\theta)} \big((0,T];\mathbb H^{\nu} (\Omega)\big)} \lesssim     \|f - \widetilde{f} \|_{   \mathbb H^{\nu+\theta}(\Omega) }.
\end{align*} 
This concludes that Problem (\ref{mainprob1})-(\ref{mainprob4}) is well-posedness on $C^{\alpha(1-\theta)} \big((0,T];\mathbb H^{\nu} (\Omega)\big)$. 
\end{remark}
}

\begin{proposition} By Theorem \ref{t1}, the smoothness of the mild solution  can be summarized together as    
	\begin{align*}
	\begin{array}{llllcccc}
	\displaystyle u\in \left\{ \bigcup_{1\le p < \frac{1}{\alpha(1-\theta')} }  L^p (0,    T ;  \mathbb H^{\nu+\theta-\theta'} (\Omega)) \right\}   \cap  C^{\alpha(1-\theta)} \big((0,T];\mathbb H^{\nu} (\Omega)\big)  ,  
	\end{array} 
	\end{align*}
and 
\begin{align*}
	\left\{
	\begin{array}{llllcccc}
	\displaystyle \partial_t  u \in \bigcup_{1 \le p <  \frac{1}{\alpha(1-\theta-\nu_1)}}   L^p (0,    T ;  \mathbb H^{\nu-\nu_1-\frac{1}{\alpha}}(\Omega)), \hspace*{0.92cm}\\ \partial_t^{\alpha}  u \in \bigcup_{1\le p <  \frac{1}{\alpha(1-\theta-\nu_\alpha)}}   L^p (0,    T ;  \mathbb H^{\nu-\nu_\alpha-\frac{1}{\alpha}}(\Omega)),   
	\end{array} 
	\right.
	\end{align*}
where the values of the parameters are given in Theorem \ref{t1}. Moreover, the spatial regularity in Part b shows how the best spatial regularity that the mild solution $u$ can achieve. Then, by using some suitable Sobolev embeddings, one can derive the Gradient and Laplacian estimates for the solution on $L^q$ spaces.   
\end{proposition}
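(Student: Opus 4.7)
The plan is to construct the mild solution via Picard iteration in the weighted space $X := C^{\alpha(1-\theta)}((0,T]; \mathbb{H}^\nu(\Omega))$ equipped with $\|u\|_X = \sup_{0<t\le T} t^{\alpha(1-\theta)} \|u(t)\|_{\mathbb{H}^\nu(\Omega)}$. The key analytic ingredients, obtained by combining Lemmas \ref{MLineqna}--\ref{nayvesom} with the spectral definition \eqref{operatorAT}, are the two smoothing bounds
\begin{equation*}
\|\mathbf{B}_\alpha(t,T) v\|_{\mathbb{H}^\nu(\Omega)} \lesssim t^{-\alpha(1-\theta)} \|v\|_{\mathbb{H}^{\nu+\theta}(\Omega)}, \qquad \|\mathbf{P}_\alpha(t) w\|_{\mathbb{H}^{\nu+\theta}(\Omega)} \lesssim t^{\alpha(1-\theta)-1} \|w\|_{\mathbb{H}^{\nu}(\Omega)},
\end{equation*}
the second of which produces an integrable kernel exactly when $\theta > (\alpha-1)/\alpha$, matching the hypothesis on $\theta$. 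Define $\Phi u$ by the right-hand side of \eqref{Lmildsolution}. The first term $\mathbf{B}_\alpha(t,T)f$ already saturates the weight $t^{-\alpha(1-\theta)}$ with constant $\|f\|_{\mathbb{H}^{\nu+\theta}(\Omega)}$. For the Volterra term, the crude estimate $\|\mathbf{P}_\alpha(t-r)G(r,u(r))\|_{\mathbb{H}^\nu} \lesssim (t-r)^{\alpha-1} L_1(r)\|u(r)\|_{\mathbb{H}^\nu}$ and the Beta-function identity $\int_0^t (t-r)^{\alpha-1} r^{-\alpha(1-\theta)}dr = t^{\alpha\theta} B(\alpha, 1-\alpha(1-\theta))$ yield a contribution controlled by $T^\alpha \|L_1\|_\infty \|u\|_X$ after applying the weight.

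The main obstacle is the third, \emph{backward}, term $\int_0^T \mathbf{B}_\alpha(t,T)\mathbf{P}_\alpha(T-r) G(r,u(r))\,dr$, which has no analogue in the initial-value case and, as pointed out in Remark \ref{Diff}, precludes a Gronwall argument. I handle it by using the spatial smoothing of $\mathbf{P}_\alpha(T-r)$ to land in $\mathbb{H}^{\nu+\theta}$, so that $\mathbf{B}_\alpha(t,T)$ can act and produce the required $t^{-\alpha(1-\theta)}$. This forces the estimate $\int_0^T (T-r)^{\alpha(1-\theta)-1} r^{-\alpha(1-\theta)}dr = B(\alpha(1-\theta),1-\alpha(1-\theta))$, whose finiteness is precisely the two-sided condition $0<\alpha(1-\theta)<1$. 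Collecting the three constants into $\mathscr{M}_1$ and invoking $\|L_1\|_\infty < \mathscr{M}_1^{-1}$ makes $\Phi$ a contraction on $X$, yielding the unique fixed point and the pointwise estimate \eqref{vanphong}. Integrating $t^{-p\alpha(1-\theta)}$ gives $u \in L^p$ for all $p<1/(\alpha(1-\theta))$.

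For Part (a), I rerun the same three-term bound but measure $u$ in $\mathbb{H}^{\nu+\theta-\theta'}$, which is equivalent to shifting all exponents $\theta \to \theta'$ in the smoothing inequalities; the hypothesis $\theta' > (\alpha-1)/\alpha$ keeps the Beta integral finite, and the factor $t^{-\alpha(1-\theta')}$ dictates the new admissible range of $p$. For Part (b), I write $u(\widetilde t)-u(t)$ as a telescoping sum and estimate each difference by the mean value theorem, bounding $|E_{\alpha,1}(-\lambda_j \widetilde t^\alpha) - E_{\alpha,1}(-\lambda_j t^\alpha)|$ and the corresponding increment of $s^{\alpha-1}E_{\alpha,\alpha}$ using Lemma \ref{DeriML}(a); the constraint $1-\theta < \nu' \le 2-\theta$ is calibrated so that after extracting the factor $\lambda_j^{\nu'}$ absorbed into the weaker norm $\mathbb{H}^{\nu-\nu'}$, the remaining time integrals converge uniformly.

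For Parts (c) and (d), I differentiate \eqref{Lmildsolution} under the integral sign — the boundary term at $r=t$ in the Volterra integral vanishes because $(t-r)^{\alpha-1}E_{\alpha,\alpha}(-\lambda_j(t-r)^\alpha) \to 0$ as $r\to t^-$ when $\alpha>1$ — and use Lemma \ref{DeriML}(a)--(b) to express $\partial_t$ and $\partial_t^\alpha$ of the Mittag--Leffler kernels. An extra factor $\lambda_j^{1/\alpha}$ coming from the differentiation is absorbed into the norm $\mathbb{H}^{\nu-\nu_1-1/\alpha}$ (respectively $\mathbb{H}^{\nu-\nu_\alpha-1/\alpha}$), and the three-term estimate is reprised with the shifted exponents; the asymmetry of the admissible range for $\nu_\alpha$ versus $\nu_1$ reflects that $\partial_t^\alpha E_{\alpha,1}$ leaves the temporal power unchanged, whereas $\partial_t$ produces an additional $t^{\alpha-1}$ smoothing. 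The delicate step in Parts (c)--(d) is justifying the interchange of differentiation with the infinite spectral sum and with the backward integral; this is done by first establishing the estimates for the Picard iterates (where all sums are finite), then passing to the limit via the contraction in $X$.
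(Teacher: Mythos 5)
This proposition is, in the paper, a pure corollary of Theorem \ref{t1}: one simply takes the union over the admissible exponents $p$ of the memberships already proved there, so no new argument is needed. What you have written is instead a proof sketch of Theorem \ref{t1} itself. For the existence part and Parts (a)--(b) your route is sound and essentially equivalent to the paper's: you run a Banach contraction in the weighted space $C^{\alpha(1-\theta)}((0,T];\mathbb H^{\nu}(\Omega))$, whereas the paper runs a Picard iteration whose iterates are bounded inductively by $\mathcal N_1 t^{-\alpha(1-\theta)}\|f\|_{\mathbb H^{\nu+\theta}(\Omega)}$ and shown to be Cauchy in $L^p(0,T;\mathbb H^\nu(\Omega))$; the three-term decomposition, the smoothing bounds on ${\bf B}_\alpha(t,T)$ and ${\bf P}_\alpha$, and the Beta-function computation $\int_0^T(T-r)^{\alpha(1-\theta)-1}r^{-\alpha(1-\theta)}dr=\pi/\sin(\pi\alpha(1-\theta))$ are the same in both. (You should still verify that your map $\Phi$ preserves continuity on $(0,T]$, since continuity is part of the definition of $C^{\alpha(1-\theta)}((0,T];\mathbb H^\nu(\Omega))$; the paper supplies this in Step 2 of its proof.)

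The genuine gaps are in Parts (c)--(d). First, for $\partial_t^\alpha$ you assert that the boundary term at $r=t$ vanishes and that one may differentiate the kernels under the integral sign. This is correct for the first-order derivative, because ${\bf P}_\alpha(0)=0$ when $\alpha>1$, but it fails for the Caputo derivative of order $\alpha$: the fractional derivative of the Volterra convolution produces the additional term $G(t,u(t))$, as in the paper's identity (\ref{derivative2}) (equivalently, $v(t)=\int_0^t(t-r)^{\alpha-1}E_{\alpha,\alpha}(-\lambda(t-r)^\alpha)g(r)\,dr$ solves $\partial_t^\alpha v=-\lambda v+g$, not $\partial_t^\alpha v=-\lambda v$). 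That term must be estimated separately; it is precisely what forces the exponent $\min\{(1-\theta-\nu_\alpha);(1-\theta)\}$ in Part (d), and your sketch omits it. Second, your device for justifying termwise differentiation (``first establishing the estimates for the Picard iterates, where all sums are finite'') does not work as stated: the iterates $w^{(k)}$ are themselves infinite spectral sums. The paper instead truncates with the finite-rank projections $\mathcal P_M$, proves that the projected derivatives form a Cauchy sequence in $\mathbb H^{\nu-\nu_1-\frac{1}{\alpha}}(\Omega)$ via the estimates (\ref{moichuyennhaA})--(\ref{moichuyennhaC}), and then passes to the limit by dominated convergence. Without some such truncation the interchange of $\partial_t$ (and a fortiori $\partial_t^\alpha$) with the infinite sum and with the backward integral over $(0,T)$ is unjustified.
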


\begin{remark} In fact, one can investigate the continuity of the first order derivative $\partial_t u$ which is established in Part d of the above theorem. Moreover, if the nonlinearity $G$ is continuous in the time variable $t$, for instant, $G$ verifies that 
	\begin{align*}
	\left\| G(t_1,v_1)-G(t_2,v_2) \right\|_{\mathbb{H}^\nu(\Omega)} \lesssim |t_1-t_2|^{\mathrm{positive~exponent}} + \left\| v_1-v_2 \right\| _{\mathbb{H}^\nu(\Omega)},
	\end{align*}
	then one can establish the continuity of the fractional derivative $\partial^\alpha_t$ of the solution. 
\end{remark}
In Theorem \ref{t1}, under assumption $(\mathscr H_1)$, we do not obtain the regularity results of $u$ in the spaces $   C\left([0,T]; \mathbb H^\nu(\Omega)\right)$ or $  L^\infty (0,    T ;  \mathbb H^{\nu} (\Omega))$. The main reason is that the information at the initial time  $u(0)$ does not actually exist on $\mathbb{H}^\nu(\Omega)$. To overcome this restriction, we are going to consider the existence of a mild solution in the spaces $   C\left([0,T]; \mathbb H^\nu(\Omega)\right)$ or $  L^\infty (0,    T ;  \mathbb H^{\nu} (\Omega))$ by imposing the assumption ($\mathscr H_2$) on the nonlineariy $G$. In addition, it is necessary to suppose a   smoother assumption on the final value data $f$. 
In the following  theorem, we will build up this   existence and also a regularity result for the mild solution by using the Banach fixed-point theorem. Let us recall the fact that  the embedding $\mathbb{H}^{\nu+1}(\Omega)  \hookrightarrow \mathbb{H}^\sigma(\Omega)$ holds as $0\le \sigma\le \nu+1$.  So,  there exists a positive constant $C_2 (\nu, \sigma)$ such that
\begin{align}
\big\| v \big\|_{\mathbb{H}^\sigma(\Omega)} \le C_2(\nu,\sigma)  \big\| v \big\|_{\mathbb{H}^{\nu+1}(\Omega)},  \label{Embed2}
\end{align}
for all $v \in \mathbb{H}^{\nu+1}(\Omega)$.    

\begin{theorem} \label{theo2}
	Let  $ \frac{\alpha q-1}{\alpha q}<\theta<1$,  $0\le \nu\le \sigma \le \nu +1 $ and $1\le q < \frac{1}{\alpha(1-\theta)}$. Assume that $f \in  \mathbb H^{\nu+\theta+1}(\Omega) $, and  $G$ sastisfies $(\mathscr H_2)$ with $   L_2 \in \left (0;   \mathscr M_2^{-1} \right )$ where $\mathscr M_2$ is given by \textbf{(AP.4)} in the Appendix.  
	Then,  Problem (\ref{mainprob1})-(\ref{mainprob4}) has a unique mild solution $$u \in   C\left([0,T]; \mathbb H^\nu(\Omega)\right) \cap  L^q(0,T; \mathbb H^{\sigma}(\Omega)).  $$ 
	Moreover, we have
	\begin{align} \label{rs1}
 	\sup_{0\le t\le T}  \big\|u(t)\big\|_{  \mathbb H^\nu(\Omega)   } + \left(\int_0^T  \big\|u(t)\big\|^q_{  \mathbb H^{\sigma}(\Omega) } dt \right)^{1/q} \lesssim \|f\|_{\mathbb H^{\nu+\theta+1}(\Omega)}  .
	\end{align}
	
\end{theorem}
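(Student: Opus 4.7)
The plan is to apply the Banach fixed-point theorem on the Banach space
\[
X := C\bigl([0,T];\mathbb H^{\nu}(\Omega)\bigr)\cap L^{q}\bigl(0,T;\mathbb H^{\sigma}(\Omega)\bigr),
\qquad
\|w\|_X := \sup_{0\le t\le T}\|w(t)\|_{\mathbb H^\nu} + \Bigl(\int_0^T\|w(t)\|_{\mathbb H^\sigma}^q dt\Bigr)^{1/q},
\]
for the Picard operator
\[
(\Phi u)(t) := {\bf B}_\alpha(t,T)f + \int_0^t {\bf P}_\alpha(t-r)G(r,u(r))\,dr - \int_0^T {\bf B}_\alpha(t,T){\bf P}_\alpha(T-r)G(r,u(r))\,dr
\]
read off from the mild formulation \eqref{Lmildsolution}. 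The main task is to verify $\Phi:X\to X$ and that $\Phi$ is a strict contraction whenever $L_2\mathscr M_2<1$.

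First I would handle the linear term ${\bf B}_\alpha(t,T)f$. Combining Lemmas \ref{MLineqna} and \ref{TTT} yields the uniform bound $|\mathscr E_{\alpha,T}(-\lambda_j t^\alpha)|\lesssim 1+\lambda_j$, hence ${\bf B}_\alpha(t,T):\mathbb H^{\nu+1}(\Omega)\to\mathbb H^{\nu}(\Omega)$ is bounded uniformly in $t\in[0,T]$; since $\mathbb H^{\nu+\theta+1}\hookrightarrow\mathbb H^{\nu+1}$, this controls the $C([0,T];\mathbb H^{\nu})$ part. For the $L^q(\mathbb H^\sigma)$ part I would apply Lemma \ref{nayvesom}(b) with the given $\theta$ to obtain $\|{\bf B}_\alpha(t,T)f\|_{\mathbb H^\sigma}\lesssim t^{-\alpha(1-\theta)}\|f\|_{\mathbb H^{\sigma+\theta}}\lesssim t^{-\alpha(1-\theta)}\|f\|_{\mathbb H^{\nu+\theta+1}}$ (using $\sigma\le\nu+1$); the standing assumption $q<1/(\alpha(1-\theta))$ is precisely what makes $t^{-q\alpha(1-\theta)}$ integrable on $(0,T)$, yielding the bound by $\|f\|_{\mathbb H^{\nu+\theta+1}}$.

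Next, for the two nonlinear terms, I would use ($\mathscr H_2$) to get $\|G(r,u(r))\|_{\mathbb H^{\nu+1}}\le L_2\|u\|_X$ and estimate them through ${\bf P}_\alpha$ and ${\bf B}_\alpha{\bf P}_\alpha$. The convolution $\int_0^t{\bf P}_\alpha(t-r)G\,dr$ is handled by Lemma \ref{nayvesom}(a): for the $\mathbb H^\nu$ norm one simply uses $|E_{\alpha,\alpha}|\lesssim 1$ to get a kernel $(t-r)^{\alpha-1}\|G\|_{\mathbb H^\nu}$ integrable in $r$; for the $\mathbb H^\sigma$ norm one chooses $\theta_0=\sigma-\nu\in[0,1)$ in Lemma \ref{nayvesom}(a) (treating the endpoint $\sigma=\nu+1$ with the trivial $|E_{\alpha,\alpha}|\lesssim 1$ bound) so the regularity lost is absorbed by $\|G\|_{\mathbb H^\nu}\le\|G\|_{\mathbb H^{\nu+1}}$ and the time integrand $(t-r)^{\alpha(\nu+1-\sigma)-1}$ remains integrable. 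The last term factors as ${\bf B}_\alpha(t,T)$ acting on ${\bf P}_\alpha(T-r)G(r,u(r))\in\mathbb H^{\nu+1}$; using the uniform boundedness of ${\bf B}_\alpha(t,T):\mathbb H^{\nu+1}\to\mathbb H^{\nu}$ handles its $C([0,T];\mathbb H^\nu)$ norm after integrating the kernel $(T-r)^{\alpha-1}$, while for its $L^q(\mathbb H^\sigma)$ norm I would apply Lemma \ref{nayvesom}(b) to produce the singular factor $t^{-\alpha(1-\theta)}$ (again absorbable by the hypothesis on $q$) multiplied by $\|{\bf P}_\alpha(T-r)G\|_{\mathbb H^{\sigma+\theta}}$, which is controlled via Lemma \ref{nayvesom}(a) with parameter chosen so that the resulting $\mathbb H^{\nu+1}$-norm matches ($\mathscr H_2$).

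Summing the three contributions gives $\|\Phi u\|_X\le C_f\|f\|_{\mathbb H^{\nu+\theta+1}}+L_2\mathscr M_2\|u\|_X$ for an explicit $\mathscr M_2$, where the same bilinear estimates applied to $G(r,u)-G(r,v)$ yield $\|\Phi u-\Phi v\|_X\le L_2\mathscr M_2\|u-v\|_X$; the smallness hypothesis $L_2<\mathscr M_2^{-1}$ makes $\Phi$ a contraction. The Banach fixed-point theorem then produces the unique mild solution $u\in X$ and passing to the limit in the contraction estimate gives the desired bound \eqref{rs1}. Continuity at $t=0$ of the map $t\mapsto{\bf B}_\alpha(t,T)f$ in $\mathbb H^\nu$ follows from dominated convergence using the uniform summable bound provided by $f\in\mathbb H^{\nu+1}$. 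The main obstacle I expect is the bookkeeping for the third term: one must calibrate the two applications of Lemma \ref{nayvesom} (one for ${\bf B}_\alpha$, one for ${\bf P}_\alpha$) so that the combined $\lambda_j$-weights do not exceed the regularity $\mathbb H^{\nu+1}$ afforded by ($\mathscr H_2$), while the singular factor $t^{-\alpha(1-\theta)}$ remains $L^q$-integrable; this is what pins down the condition $\theta>(q\alpha-1)/(q\alpha)$ in the hypothesis.
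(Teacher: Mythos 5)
Your proposal is correct and follows essentially the same route as the paper: a Banach fixed-point argument for the same operator on $C([0,T];\mathbb H^{\nu}(\Omega))\cap L^{q}(0,T;\mathbb H^{\sigma}(\Omega))$, with the sup-norm contribution controlled by the uniform bound $|\mathscr E_{\alpha,T}(-\lambda_j t^{\alpha})|\lesssim 1+\lambda_j T^{\alpha}$ (so that ${\bf B}_{\alpha}(t,T)$ and ${\bf B}_{\alpha}(t,T){\bf P}_{\alpha}(T-r)$ cost one unit of regularity, paid for by $(\mathscr H_2)$ and $f\in\mathbb H^{\nu+\theta+1}$), and the $L^{q}(\mathbb H^{\sigma})$ contribution controlled by the $t^{-\alpha(1-\theta)}$ smoothing estimate made integrable precisely by $q\alpha(1-\theta)<1$. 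This is the same calibration carried out in the paper's Lemma \ref{BodeSala} and in the estimates \eqref{ef1}, \eqref{e2}, \eqref{e3}, \eqref{f2}, \eqref{f3}.
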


\subsection{Well-posedness  of Problem (\ref{mainprob1})-(\ref{mainprob4}) under critical nonlinearities case}  
The previous subsection states the results in the  globally Lipschitz case,  they cannot virtually be applied in many models such as  
time fractional Ginzburg-Landau, Allen–Cahn, Burgers, Navier-Stokes, Schrodinger, etc. equations.  
In this subsection, we state the well-posedness of Problem (\ref{mainprob1})-(\ref{mainprob4}) under the critical nonlinearities case.

\begin{theorem} \label{locally1}
	Assume that $\al \in (1,2), ~\sigma \in (-1,0) $,~$0< \nu< 1+\sigma$ and   $s>0$.
	Let $\vartheta$ such that   $\vartheta \in (\nu-\sigma, 1)$ and set  $\mu=\nu-\sigma$.
	Let $\zeta$ satisfy that
	\begin{equation}
	\zeta  < \min \Big( \al^{-1}- (1+s) \vartheta ,  \vartheta(1-s)-\nu+\sigma \Big).
	\end{equation}
	The function $G$ satisfies $(\mathscr H_3)$  if $G: [0,T] \times \mathbb{H}^{\nu}(\Omega) \longrightarrow \mathbb{H}^{ {\sigma}}(\Omega)$   such that $G(0)=0$ and 
	\begin{align} \label{H3}
	\|G(v_1)-G(v_2)\|_{\mathbb{H}^{ {\sigma}}(\Omega)} \le {L}_3(t)  \Big( 1+\|v_1\|^{s}_{\mathbb{H}^{\nu}(\Omega)}+\|v_2\|^{s}_{\mathbb{H}^{\nu}(\Omega)} \Big) \|v_1-v_2\|_{\mathbb{H}^{\nu}(\Omega)}.
	\end{align}
	where ${L}_3$ satisfies that
	$
	{L}_3(t) t^{\al \zeta } \in  L^\infty (0,T).
	$
	Set
	\[
	\mathfrak X_{\al,\vartheta, \nu,T }(\mathcal R) := \Big\{ w \in   C^{\alpha \vartheta }((0,T];\mathbb{H}^{\nu}(\Omega)),~~\|w\|_{  C^{\alpha \vartheta }((0,T];\mathbb{H}^{\nu}(\Omega))} \le \mathcal R \Big\}.
	\]
	If $f\in \mathbb{H}^{\nu+ (1-\vartheta) }(\Omega)$ and 
	$ K_0T^{s\alpha \vartheta} \in \big(  0;\min\big\{\frac{1}{2}\overline{\mathscr N_2}^{\,-1};\mathcal N_f\big\}  \big)$ with $K_0= \|	{L}_3(t) t^{\al \zeta }\|_{L^\infty(0,T)} 
	$, where the constants are formulated by \textbf{(AP.4)} in the Appendix, 
	then
	Problem (\ref{mainprob1})-(\ref{mainprob4}) has a unique mild solution $u \in 	\mathfrak X_{\al,\vartheta, \nu,T }(\widehat{\mathcal R})  $ with      respect to the estimate  
	\begin{align}
	\left\|u(t)\right\|_{\mathbb{H}^{\nu}(\Omega)} \lesssim t^{-\alpha \vartheta } \norm{f} _{\mathbb{H}^{\nu+(1- \vartheta)}(\Omega)}. \label{ubound}
	\end{align} 
	Moreover,  we obtain the  following spatial and time  regularities     
	\begin{itemize}
		\item[a)]   Let $\vartheta \le \vartheta' \le 1$ and $\alpha\vartheta-1\le\beta \le \alpha \vartheta$, then $t^\beta u \in  L^p (0,    T ;  \mathbb H^{\nu+(\vartheta'-\vartheta)} (\Omega))$, for all $p\in \left[1;\frac{1}{\alpha\vartheta-\beta}\right)$, with respect to the estimate 
		\begin{align*}
		\big\| u(t) \big\|_{\mathbb H^{\nu+(\vartheta'-\vartheta)} (\Omega) } \lesssim t^{-\alpha\vartheta'} \left\|f \right\|_{\mathbb{H}^{\nu+(1-\vartheta)}(\Omega)} . 
		\end{align*}
		\item[b)] Let $\vartheta<\eta \le \vartheta+1$ then $u\in \mathscr C^{ \,\eta_{cri} } \hspace*{-0.05cm} \left([0,T];\mathbb H^{\nu-\eta} (\Omega)\right)$  and   
		$$\left\| u(\widetilde t)- u(t) \right\|_{\mathbb H^{\nu-\eta} (\Omega)}      \lesssim (\widetilde t-t)^{\,\eta_{cri}} \left\|f \right\|_{\mathbb{H}^{\nu+(1-\vartheta)}(\Omega)} . $$
		where $\eta_{cri}$ is defined in the Appendix. 
	\end{itemize}
	 	The hidden constants  (as using the notation $\lesssim$)  depend  only on $\alpha,\mu ,\vartheta ,\zeta,s,T$ in the inequality (\ref{ubound}), on $\alpha,\mu ,\vartheta,\vartheta' ,\zeta,s,T$ in Part a, and  on $\alpha,\mu ,\vartheta,\eta ,\zeta,s,T$ in Part b. 
\end{theorem}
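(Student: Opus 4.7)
The plan is to apply the Banach contraction mapping theorem to the operator
\[
\Phi(u)(t) := \mathbf{B}_\alpha(t,T)f + \int_0^t \mathbf{P}_\alpha(t-r) G(r,u(r))\,dr - \int_0^T \mathbf{B}_\alpha(t,T)\mathbf{P}_\alpha(T-r) G(r,u(r))\,dr
\]
on the closed ball $\mathfrak{X}_{\alpha,\vartheta,\nu,T}(\mathcal{R})$ for a suitably chosen radius $\mathcal{R}$. The three operator bounds that drive everything are: $\|\mathbf{B}_\alpha(t,T)f\|_{\mathbb{H}^\nu(\Omega)} \lesssim t^{-\alpha\vartheta}\|f\|_{\mathbb{H}^{\nu+(1-\vartheta)}(\Omega)}$, from Lemma \ref{nayvesom}(b) with $\theta=1-\vartheta$; $\|\mathbf{P}_\alpha(z)g\|_{\mathbb{H}^\nu(\Omega)} \lesssim z^{\alpha(1-\mu)-1}\|g\|_{\mathbb{H}^\sigma(\Omega)}$ with $\mu = \nu-\sigma$, from Lemma \ref{nayvesom}(a) with $\theta=\mu$; and the composition $\|\mathbf{B}_\alpha(t,T)\mathbf{P}_\alpha(T-r) g\|_{\mathbb{H}^\nu(\Omega)} \lesssim t^{-\alpha\vartheta}(T-r)^{\alpha(\vartheta-\mu)-1}\|g\|_{\mathbb{H}^\sigma(\Omega)}$, whose admissibility hinges precisely on the standing hypothesis $\vartheta>\mu$.

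For the self-mapping step, if $u\in\mathfrak{X}(\mathcal{R})$ then $(\mathscr{H}_3)$ together with $G(0)=0$ and the $L^\infty$-bound on $L_3(t)t^{\alpha\zeta}$ give
\[
\|G(r,u(r))\|_{\mathbb{H}^\sigma(\Omega)} \le K_0 r^{-\alpha\zeta}\bigl(\mathcal{R}r^{-\alpha\vartheta} + \mathcal{R}^{s+1}r^{-\alpha\vartheta(s+1)}\bigr).
\]
Substituting this into the two integrals and applying the Beta identity $\int_0^\tau (\tau-r)^{a-1}r^{b-1}\,dr=\tau^{a+b-1}B(b,a)$ yields explicit power-type bounds. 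The convergence conditions $\alpha(1-\mu)>0$, $\alpha(\vartheta-\mu)>0$ and $\alpha\zeta+\alpha\vartheta(1+s)<1$ are exactly the first restriction $\zeta<\alpha^{-1}-(1+s)\vartheta$, and after multiplication by $t^{\alpha\vartheta}$ a net factor $T^{\alpha(1-\mu-\zeta-s\vartheta)}$ is produced whose positivity is guaranteed by the second restriction $\zeta<\vartheta(1-s)-\mu$. Taking $\mathcal{R}\ge 2\mathscr{N}_1\|f\|_{\mathbb{H}^{\nu+(1-\vartheta)}(\Omega)}$ together with the smallness hypothesis on $K_0T^{s\alpha\vartheta}$ coming from $\mathcal{N}_f$ closes $\Phi(\mathfrak{X}(\mathcal{R}))\subset\mathfrak{X}(\mathcal{R})$. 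Rerunning the same estimates on the difference $\Phi(u_1)-\Phi(u_2)$, now using the Lipschitz factor $(1+\|u_1\|^s+\|u_2\|^s)\|u_1-u_2\|$ from $(\mathscr{H}_3)$, yields a contraction constant $\overline{\mathscr{N}_2}K_0T^{s\alpha\vartheta}<\tfrac{1}{2}$. Banach's theorem then produces the unique fixed point $u\in\mathfrak{X}(\widehat{\mathcal{R}})$, and the pointwise estimate (\ref{ubound}) follows by taking the $\mathfrak{X}$-norm of the fixed-point identity.

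For Part (a) I repeat the operator bounds with target space $\mathbb{H}^{\nu+(\vartheta'-\vartheta)}(\Omega)$: Lemma \ref{nayvesom}(b) with $\theta=1-\vartheta'$ sharpens the time singularity to $t^{-\alpha\vartheta'}$, and Lemma \ref{nayvesom}(a) with $\theta=\mu+(\vartheta'-\vartheta)$, still strictly less than $1$ because $\mu<\vartheta\le\vartheta'\le 1$, controls the two nonlinear integrals; the $L^p$-claim is then the observation that $\int_0^T t^{p(\beta-\alpha\vartheta')}\,dt$ is finite as soon as $p(\alpha\vartheta'-\beta)<1$. For Part (b) I use the splitting
\[
u(\widetilde t)-u(t) = \bigl(\mathbf{B}_\alpha(\widetilde t,T)-\mathbf{B}_\alpha(t,T)\bigr)\Bigl(f-\int_0^T\mathbf{P}_\alpha(T-r)G(r,u(r))\,dr\Bigr) + \int_t^{\widetilde t}\mathbf{P}_\alpha(\widetilde t-r)G(r,u(r))\,dr + \int_0^t\bigl(\mathbf{P}_\alpha(\widetilde t-r)-\mathbf{P}_\alpha(t-r)\bigr)G(r,u(r))\,dr
\]
and estimate each difference of operators by integrating the derivative formulas of Lemma \ref{DeriML} while paying with $\eta$ units of spatial regularity, combined with the pointwise bound on $G(r,u(r))$ just obtained; the exponent $\eta_{cri}$ is the infimum of the competing time powers that emerge.

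The main obstacle, already signalled in Remark \ref{Diff}, is controlling the last term $\int_0^T\mathbf{B}_\alpha(t,T)\mathbf{P}_\alpha(T-r)G(r,u(r))\,dr$ in the critical regime. Because $G$ outputs only in the negative-index space $\mathbb{H}^\sigma(\Omega)$, $\mathbf{P}_\alpha(T-r)$ must first smooth it up to $\mathbb{H}^{\nu+(1-\vartheta)}(\Omega)$ so that $\mathbf{B}_\alpha(t,T)$ can bring it back to $\mathbb{H}^\nu(\Omega)$; this forces $\vartheta>\mu$ and couples three simultaneous singularities $t^{-\alpha\vartheta}$, $(T-r)^{\alpha(\vartheta-\mu)-1}$ and $r^{-\alpha\zeta-\alpha\vartheta(1+s)}$ that all have to be integrable against one another across $(0,T)$. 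The delicate-looking parameter window $\zeta<\min(\alpha^{-1}-(1+s)\vartheta,\vartheta(1-s)-\mu)$ is exactly what makes the combined Beta integrals converge and the surviving $T$-exponent non-negative.
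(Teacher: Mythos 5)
Your proposal is correct and follows essentially the same route as the paper: a Banach fixed-point argument on the weighted ball $\mathfrak X_{\al,\vartheta,\nu,T}(\mathcal R)$, driven by the three Mittag-Leffler operator bounds (which the paper packages as Lemma \ref{locally}), the growth estimate $\|G(r,u(r))\|_{\mathbb{H}^\sigma(\Omega)}\le K_0 r^{-\alpha\zeta}(\mathcal R r^{-\alpha\vartheta}+\mathcal R^{1+s}r^{-(1+s)\alpha\vartheta})$, and the Beta-integral convergence conditions encoded in the restrictions on $\zeta$; Parts a and b are likewise handled as in the paper (resp.\ re-running the bounds in $\mathbb H^{\nu+(\vartheta'-\vartheta)}(\Omega)$, and the four-term splitting of $u(\widetilde t)-u(t)$). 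The only cosmetic difference is the choice of radius: the paper solves $\pi(\mathcal R)=\mathcal R$ on $(0,\widehat{\mathcal R})$ via the intermediate value theorem using the hypothesis $K_0T^{s\alpha\vartheta}<\min\{\tfrac12\overline{\mathscr N_2}^{\,-1};\mathcal N_f\}$, whereas you posit $\mathcal R\gtrsim\|f\|_{\mathbb{H}^{\nu+(1-\vartheta)}(\Omega)}$ directly, but both rest on the same smallness mechanism.
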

 
	\begin{remark} One can actually establish the existence of the derivatives $\partial_t u$ and $\partial_t^\alpha$ of the solution as follows
		\begin{itemize}
		\item[i)] Assume that $\vartheta < \frac{\mu+1}{2}$ and let $\vartheta_1\in \left[\vartheta ; \frac{\nu-\sigma+1}{2}\right)$, then $\partial_t u(t) \in \mathbb H^{\sigma+\vartheta_1-1}({\Omega})$ for each $t>0$ which corresponds to the estimate 
		\begin{align}  
		&\left\| \partial_t u(t) \right\|_{\mathbb H^{\sigma+\vartheta_1-1}({\Omega})}     \lesssim t^{-\alpha\left(2\vartheta_1-\mu-\frac{\alpha-1}{\alpha}\right)} \left\| f \right\| _{\mathbb{H}^{\nu+(1-\vartheta)}(\Omega)}.\label{der1}
		\end{align}
		\item[ii)] Assume that $\vartheta \le \frac{2\nu-2\sigma+1}{3}$ and let $\vartheta_\alpha\in \left[ \frac{\nu-\sigma+2}{3}; \frac{\nu-\sigma+5}{3} \right)$, then $\partial_t^\alpha u(t) \in \mathbb H^{\sigma+\vartheta_\alpha-2} (\Omega)$ for each $t>0$  which corresponds to the estimate  
		\begin{align}
		\left\| \partial_t^\alpha u(t) \right\|_{\mathbb H^{\sigma+\vartheta_\alpha-2} (\Omega)} \lesssim  t^{- \max \left\{\alpha \left( \vartheta_\alpha - \frac{\mu+2}{3}\right) ; \alpha\left( 2\vartheta - \mu \right) \right\} } \|f\|_{\mathbb H^{\nu+(1-\vartheta)}(\Omega)} . \label{der2} 
		\end{align}
	\end{itemize}
	The above results are weaker than Parts c, d of Theorem \ref{t1} since the powers in (\ref{der1}), (\ref{der2}) are really less than $-1$. So, we cannot obtain the existence  of $\partial_tu$, $\partial^\alpha_t u$ in the $L^p$ spaces with respect to the time variable. This obviously comes from the critical property of the nonlinearity. 
	\end{remark}

\section{Applications}
In this section we apply the theory developed in this work to some well-known equations. The classes of {\it time fractional Ginzburg-Landau  equation} and  {\it time fractional  Burgers equation},  are studied in $L^q$ ($q\ge 1$)
settings via interpolation–extrapolation scales and dual interpolation–extrapolation
scales of Sobolev spaces. We will discuss both time and spatial regularity of solutions by considering 
\begin{itemize}
\item The time continuities of solutions on $L^q$ spaces with respect to the intervals $(0;T]$, $[0,T]$;
\item The Gradient and Laplacian estimates for the solutions on $L^q$ spaces. 
\end{itemize}

\subsection{Time fractional Ginzburg-Landau equation}
We discuss now an application of our methods to a final value problem for a time fractional Ginzburg-Landau equation which is stated as follows  
\begin{align}
\left\{ \begin{array}{llllllcccccc}
\displaystyle  \partial_t^{\,\alpha }  u(x,t) +\mathcal L u(x,t) &=& \rho(t) |u(x,t)|^{s} u(x,t)  , &&  x \in\Omega , \hspace*{0.25cm} t\in (0,T),\\
u(x,t) &=& 0, && x \in \partial 
\Omega,\, t\in (0,T), \\
\partial_t  u(x,0) &=& 0, && x \in \Omega,   \\
\end{array}
\right. \label{GinLan}
\end{align}
associated with the final value data 
\eqref{mainprob4} and 
where $s>0$ is a given number.  

\begin{theorem} Assume that $2\le N\le 4$. \vspace*{0.2cm}
	
	\noindent a) \textbf{The case $0<s < \dfrac{4}{N}$:} Let the numbers $\varrho$, $\nu$, $\sigma$, $\mu$, $\vartheta$, $\vartheta'$ respectively satisfy that 
	$$ \varrho\in \left(0; \frac{Ns}{8}\right],~ \nu\in \left[ \frac{N}{4}-\frac{\varrho}{s} ;\frac{N}{4}\right),~  \mu\in \left(\mu_0;  \frac{N+4}{8} \right), ~ \vartheta\in \left( \mu;\frac{N+4}{8}\right), ~ \vartheta'\in  \left[ \vartheta + \frac{4-N}{8} ;1\right),$$
	where $\mu=\nu-\sigma$ and $\mu_0:=\max\left\{ \nu;s\left(\frac{N}{4}-\nu\right) \right\}$. If $f\in \mathbb{H}^{\nu+(1-\vartheta)}$, and $\rho(t)\le C_{\rho} t^{b}$ with $b >- \min \left\{ \frac{1}{\alpha}-(1+s)\vartheta ; (\vartheta-\mu)-s\vartheta  \right\}$ and  $C_\rho$ is small enough, then Problem (\ref{GinLan}) has a unique mild solution $u$ such that \vspace*{0.1cm}
	\begin{itemize}
		\item[•] \textbf{(Time regularity)} $u\in   C^{\alpha\vartheta}((0,T];L^4(\Omega)) \cap   C^{ \,\eta_{cri} } \hspace*{-0.05cm} \left([0,T];\mathbb H^{\nu-\eta} (\Omega)\right)$ where $\vartheta<\eta \le \vartheta+1$. This solution satisfies the estimate 
		\begin{align}
		t^{ \alpha \vartheta } \left\|u(t)\right\|_{L^4(\Omega)}\mathbf{1}_{t > 0} + \frac{\left\| u(t+\gamma)- u(t) \right\|_{\mathbb H^{\nu-\eta} (\Omega)}     }{\gamma^{\,\eta_{cri}}} \mathbf{1}_{t\ge 0} \lesssim  \norm{f} _{\mathbb{H}^{\nu+(1- \vartheta)}(\Omega)}. \label{bogiaoduca}
		\end{align}
		\item[•] \textbf{(Spatial regularity)} For each $t>0$, $u(t)$ belongs to $W^{1,\frac{4N}{3N-8\nu}} (\Omega)$ and verifies the estimate  
		\begin{align}
		t^{ \alpha\vartheta'} \big\| \nabla u(t) \big\|_{L^{\frac{4N}{3N-8\nu}}(\Omega)} + t^{ \alpha\vartheta'} \big\|(-\Delta)^{  \vartheta'-\vartheta } u(t) \big\|_{L^4(\Omega)}    \lesssim \left\|f \right\|_{\mathbb{H}^{\nu+(1-\vartheta)}(\Omega)}. \label{bogiaoducca}
		\end{align}
	\end{itemize} 
	
	\vspace*{0.2cm}
	
	\noindent b) \textbf{The case $s \ge  \dfrac{4}{N}$:}  Let the numbers  $\nu$, $\sigma$, $\mu$, $\vartheta$, $\vartheta'$ respectively satisfy that 
	$$   \nu\in \left[ \frac{N}{4}-\frac{1}{2s} ;\frac{N}{4}\right),~  \mu\in \left(\mu_0;  \frac{N+4}{8} \right), ~ \vartheta\in \left( \mu;\frac{N+4}{8}\right), ~ \vartheta'\in  \left[ \vartheta + \frac{4-N}{8} ;1\right),$$
	whereupon $\mu=\nu-\sigma$  and $\mu_0:=\max\left\{ \nu;s\left(\frac{N}{4}-\nu\right) \right\}$. If $f\in \mathbb{H}^{\nu+(1-\vartheta)}$, and $\rho(t)\le C_{\rho} t^{b}$ such that $b >- \min \left\{ \frac{1}{\alpha}-(1+s)\vartheta ; (\vartheta-\mu)-s\vartheta  \right\}$ and  $C_\rho$ is small enough, then Problem (\ref{GinLan}) has a unique mild solution $u$ such that \vspace*{0.1cm}
	\begin{itemize}
		\item[•] \textbf{(Time regularity)} $u\in   C^{\alpha\vartheta}((0,T];L^{Ns}(\Omega)) \cap   C^{ \,\eta_{cri} } \hspace*{-0.05cm} \left([0,T];\mathbb H^{\nu-\eta} (\Omega)\right)$ where $\vartheta<\eta \le \vartheta+1$. This solution satisfies the estimate 
		\begin{align}
		t^{ \alpha \vartheta } \left\|u(t)\right\|_{L^{Ns}(\Omega)}\mathbf{1}_{t > 0} + \frac{\left\| u(t+\gamma)- u(t) \right\|_{\mathbb H^{\nu-\eta} (\Omega)}     }{\gamma^{\,\eta_{cri}}} \mathbf{1}_{t\ge 0} \lesssim  \norm{f} _{\mathbb{H}^{\nu+(1- \vartheta)}(\Omega)}. \label{bogiaoducc}
		\end{align}
		\item[•] \textbf{(Spatial regularity)} For each $t>0$, $u(t)$ belongs to $W^{1,\frac{4N}{3N-8\nu}} (\Omega)$ and verifies the estimate  
		\begin{align}
		t^{ \alpha\vartheta'} \big\| \nabla u(t) \big\|_{L^{\frac{4N}{3N-8\nu}}(\Omega)} + t^{ \alpha\vartheta'} \big\|(-\Delta)^{  \vartheta'-\vartheta } u(t) \big\|_{L^{Ns}(\Omega)}    \lesssim \left\|f \right\|_{\mathbb{H}^{\nu+(1-\vartheta)}(\Omega)}. \label{bogiaoduccb}
		\end{align}
	\end{itemize} 
	
\end{theorem}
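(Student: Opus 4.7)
The plan is to recognize \eqref{GinLan}--\eqref{mainprob4} as an instance of the abstract FVP covered by Theorem \ref{locally1} with nonlinearity $G(t,u)=\rho(t)|u|^{s}u$, verify hypothesis $(\mathscr H_3)$ in both parameter regimes, and then translate the resulting Hilbert-scale estimates back into the $L^{p}$- and $W^{1,p}$-conclusions displayed in the statement. Throughout I rely on the embedding chain $\mathbb{H}^{\nu}(\Omega)\hookrightarrow W^{2\nu,2}(\Omega)\hookrightarrow L^{q_{1}}(\Omega)$ with $q_{1}=\frac{2N}{N-4\nu}$, valid since $\nu<N/4$, together with its dual form $L^{q_{0}}(\Omega)\hookrightarrow \mathbb{H}^{\sigma}(\Omega)$ for $\sigma\in(-1,0)$ and $q_{0}=\frac{2N}{N-4\sigma}$, both read off from \eqref{nhung1a}, \eqref{nhung1b}, and \eqref{HSem}.

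First I verify $(\mathscr H_3)$. Using the pointwise bound $\bigl||v_{1}|^{s}v_{1}-|v_{2}|^{s}v_{2}\bigr|\le C_{s}(|v_{1}|^{s}+|v_{2}|^{s})|v_{1}-v_{2}|$ and H\"older with exponents $(sq_{1},q_{2})$ matched to $1/q_{0}=s/(sq_{1})+1/q_{2}$, I obtain
\[
\|G(t,v_{1})-G(t,v_{2})\|_{\mathbb{H}^{\sigma}(\Omega)} \lesssim \rho(t)\bigl(\|v_{1}\|_{L^{sq_{1}}}^{s}+\|v_{2}\|_{L^{sq_{1}}}^{s}\bigr)\|v_{1}-v_{2}\|_{L^{q_{2}}}.
\]
In case (a), $0<s<4/N$, the condition $\nu\in[N/4-\varrho/s,\,N/4)$ with $\varrho\le Ns/8$ makes both $L^{sq_{1}}$ and $L^{q_{2}}$ receivers of the embedding $\mathbb{H}^{\nu}\hookrightarrow L^{q_{1}}$, so the right-hand side is dominated by $\rho(t)(1+\|v_{1}\|_{\mathbb{H}^{\nu}}^{s}+\|v_{2}\|_{\mathbb{H}^{\nu}}^{s})\|v_{1}-v_{2}\|_{\mathbb{H}^{\nu}}$. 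In case (b), $s\ge 4/N$, the same algebra works provided $\nu\ge N/4-1/(2s)$, which is exactly the lower bound imposed. Setting $L_{3}(t)=C_{s}\rho(t)\le C_{s}C_{\rho}t^{b}$, the smallness of $K_{0}=\|L_{3}(t)t^{\alpha\zeta}\|_{L^{\infty}(0,T)}$ follows from that of $C_{\rho}$ precisely when $b\ge -\alpha\zeta$; taking $\zeta$ up to the upper bound allowed in $(\mathscr H_3)$ and choosing the best $\zeta$ translates this into the stated lower bound on $b$.

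With $(\mathscr H_3)$ verified, Theorem \ref{locally1} supplies a unique mild solution $u\in\mathfrak X_{\alpha,\vartheta,\nu,T}(\widehat{\mathcal R})$ with $\|u(t)\|_{\mathbb{H}^{\nu}}\lesssim t^{-\alpha\vartheta}\|f\|_{\mathbb{H}^{\nu+(1-\vartheta)}}$. Combining this weighted bound with the embedding $\mathbb{H}^{\nu}\hookrightarrow L^{4}$ in case (a), respectively $\mathbb{H}^{\nu}\hookrightarrow L^{Ns}$ in case (b), yields the first summand in \eqref{bogiaoduca}, \eqref{bogiaoducc}. The H\"older-in-time summand is literally Part b of Theorem \ref{locally1}. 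For the spatial estimates \eqref{bogiaoducca}, \eqref{bogiaoduccb}, Part a of Theorem \ref{locally1} with $\vartheta'\in[\vartheta+(4-N)/8,\,1)$ places $u(t)$ in $\mathbb{H}^{\nu+(\vartheta'-\vartheta)}$ with weight $t^{-\alpha\vartheta'}$; the constraint $\vartheta'-\vartheta\ge (4-N)/8$ together with $\nu<N/4$ then gives $\mathbb{H}^{\nu+(\vartheta'-\vartheta)}\hookrightarrow W^{1,\frac{4N}{3N-8\nu}}$ via \eqref{nhung1}, delivering the gradient bound, and the bound on $(-\Delta)^{\vartheta'-\vartheta}u$ in $L^{4}$ (resp.\ $L^{Ns}$) follows from the same embedding after stripping off $\vartheta'-\vartheta$ fractional powers.

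The main obstacle is simultaneous bookkeeping of three interacting constraint systems: the Sobolev-embedding requirements on $\nu,\sigma$ needed to make $|u|^{s}u$ land in $\mathbb{H}^{\sigma}$ (this is what forces the case split at $s=4/N$ and dictates $\mu_{0}$); the admissibility conditions on $(\mu,\vartheta,\zeta)$ from Theorem \ref{locally1}, namely $\mu<\vartheta<(N+4)/8$ and the two-sided upper bound on $\zeta$; and the integrability-near-zero condition on $\rho(t)t^{\alpha\zeta}$, which converts into the displayed lower bound on $b$. Once the parameter arithmetic is aligned, the proof becomes a direct unpacking of Theorem \ref{locally1} through the Sobolev embeddings recalled in Section 2.
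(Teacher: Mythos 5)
Your overall strategy coincides with the paper's: verify $(\mathscr H_3)$ for $G(t,v)=\rho(t)|v|^{s}v$ via the pointwise bound, H\"older's inequality and Sobolev embeddings, invoke Theorem \ref{locally1}, and then push the resulting weighted $\mathbb{H}^{\nu}$-estimates through further embeddings to obtain the $L^{4}$ (resp.\ $L^{Ns}$) and $W^{1,\frac{4N}{3N-8\nu}}$ conclusions. The last two stages of your argument match the paper's proof essentially line by line.

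However, the H\"older step in your verification of $(\mathscr H_3)$ has a genuine gap. You split $\big\||v_{1}|^{s}|v_{1}-v_{2}|\big\|_{L^{q_{0}}}$ as $\|v_{1}\|^{s}_{L^{sq_{1}}}\|v_{1}-v_{2}\|_{L^{q_{2}}}$ with $q_{1}=\frac{2N}{N-4\nu}$ and then claim $\mathbb{H}^{\nu}(\Omega)\hookrightarrow L^{sq_{1}}(\Omega)$. Since $q_{1}$ is already the critical exponent for $\mathbb{H}^{\nu}(\Omega)$, that embedding requires $sq_{1}\le q_{1}$, i.e.\ $s\le 1$: it fails for $s\in(1,4/N)$ when $N\in\{2,3\}$ in case (a), and it fails for essentially all of case (b), where $s\ge 4/N\ge 1$. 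The paper avoids this by distributing the integrability symmetrically: both factors are placed in $L^{\frac{2N(1+s)}{N-4\sigma}}(\Omega)$, and the embedding $\mathbb{H}^{\nu}(\Omega)\hookrightarrow L^{\frac{2N(1+s)}{N-4\sigma}}(\Omega)$ is precisely what the hypothesis $s\left(\frac{N}{4}-\nu\right)\le\mu_{0}<\mu$ guarantees, being equivalent to $\frac{2N(1+s)}{N-4\sigma}\le\frac{2N}{N-4\nu}$. Your write-up never invokes this hypothesis in the nonlinear estimate and instead attributes the closure of the estimate to $\nu\ge\frac{N}{4}-\frac{\varrho}{s}$, whose actual role is only to give $\frac{2N}{N-4\nu}\ge 4$ (resp.\ $\ge Ns$) for the final $L^{4}$/$L^{Ns}$ conclusions. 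The fix is local --- rebalance the exponents as the paper does --- but as written the step would fail.
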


\begin{proof} 
	a) This proof will be based on applying and improving Theorem \ref{locally1} actually. We firstly exhibit some explanations justifying that the assumptions in this part are suitable.    \vspace*{0.15cm}
	\begin{itemize}
		\item $0<\varrho <\dfrac{1}{2}$    since    $\dfrac{Ns}{8}<\dfrac{N\left(\frac{4}{N}\right)}{8} = \dfrac{1}{2}$ as the assumption $s<\dfrac{4}{N}$; \vspace*{0.15cm}
		\item $0<\varrho \le   \dfrac{3N-4}{8}s $   since   $\dfrac{Ns}{8} \le \frac{Ns}{\left(\frac{8N}{3N-4}\right)} = \dfrac{3N-4}{8}$ by the fact that $ 8 \ge  \dfrac{8N}{3N-4}$; \vspace*{0.15cm}
		\item $\nu\ge   \dfrac{N}{8}$ since   $\nu \ge \dfrac{N}{4}-\dfrac{\varrho}{s} \ge \dfrac{N}{4} - \dfrac{N}{8} = \dfrac{N}{8}$  as the assumption $\nu\ge \dfrac{N}{4}-\dfrac{\varrho}{s}$ and $\varrho  \le \dfrac{Ns}{8}$. \vspace*{0.15cm}
		\item The interval $\displaystyle \left(\mu_0;  \frac{N+4}{8} \right)$ is not really empty. 
		Indeed, it is easy to see from $\nu<\dfrac{N}{4}$ and $\dfrac{N}{4}<\dfrac{N+4}{8}$ that $\nu<\dfrac{N+4}{8}$. Moreover, we have   $$\displaystyle  s\left(\dfrac{N}{4}-\nu\right) \le  s \left( \frac{N}{4}- \left( \frac{N}{4} - \frac{\varrho}{s} \right) \right) = \varrho <\frac{1}{2} < \dfrac{N+4}{8},$$   by  using   assumption $\nu>\dfrac{N}{4}-\dfrac{\varrho}{s}$ and noting that $\varrho <\dfrac{1}{2}$. \vspace*{0.15cm}
		\item The interval $\displaystyle \left[ \vartheta + \frac{4-N}{8} ;1\right)$ is also not empty as $\vartheta<\dfrac{N+4}{8}$. \vspace*{0.15cm}
		\item The number $\sigma$ belongs to $\left(-\dfrac{N}{4};0\right)$ since $\sigma=\nu-\mu \le \mu_0 - \mu <0$, and furthermore 
		$$ \nu-\mu > \frac{N}{4} -\frac{\varrho}{s}-\frac{N+4}{8} \ge \frac{N}{4} - \frac{3N-4}{8} - \frac{N+4}{8} = -\frac{N}{4}, $$ 
		by using the assumption $\nu\ge \dfrac{N}{4}-\dfrac{\varrho}{s}$ and the fact that $\varrho <  \dfrac{3N-4}{8}s $.  
	\end{itemize}
	
	Secondly, we obtain some important Sobolev embeddings which help to establish the existence of a mild solution. By applying the embeddings (\ref{nhung1b})-(\ref{HSem}), and   the dualities $\left[\mathbb{H}^{-\sigma}(\Omega)\right]^*=\mathbb{H}^{\sigma}(\Omega)$, $\left[W^{-2\sigma,2}(\Omega)\right]^*=W^{2\sigma,2}(\Omega)$, one can see that   \vspace*{0.15cm}
	\begin{itemize}
		\item The Sobolev embedding  $\displaystyle  L^{\frac{2N}{N-4\sigma}}(\Omega)   \hookrightarrow  W^{2\sigma,2}(\Omega) $ holds  as $-\dfrac{N}{2} <   2\sigma < 0, ~ \dfrac{2N}{N-4\sigma} =  \dfrac{2N}{N-2(2\sigma)}$; \vspace*{0.15cm}
		\item The Sobolev embedding $\mathbb{H}^{-\sigma} (\Omega)   \hookrightarrow W^{-2\sigma,2} (\Omega)$ holds  as $-\sigma > 0$, which implies that $W^{2\sigma,2}(\Omega)\hookrightarrow \mathbb{H}^{\sigma}(\Omega)$ holds.  \vspace*{0.15cm}
	\end{itemize}
	As a consequence of the above embeddings, we obtain the following Sobolev  embedding
	\begin{align}
	\displaystyle  L^{\frac{2N}{N-4\sigma}}(\Omega) \hspace*{0.1cm} \hookrightarrow  \mathbb{H}^{\sigma}(\Omega). \label{nhacdoquaA}
	\end{align}
	By the assumption $s\left(\frac{N}{4}-\nu\right) \le \mu_0<\mu$, we have  
	\begin{align*}
	\frac{2N(1+s)}{N-4\sigma} \,&= \frac{2N(1+s)}{N -  4(\nu-\mu)}  
	\le   \frac{2N(1+s)}{N -  4\nu+4s\left(\frac{N}{4}-\nu \right) }   = \frac{2N}{N-4\nu} .
	\end{align*}
	Therefore, using (\ref{nhung1b}) yields that  
	$$W^{2\nu,2}(\Omega) \hookrightarrow L^{\frac{2N (1+s)}{N-4\sigma}} (\Omega)  \quad \textrm{since} \quad  0\le 2\nu <\frac{N}{2}, ~ \frac{2N(1+s)}{N-4\sigma} \le   \frac{2N}{N-4\nu}. $$
	Besides, using (\ref{HSem}) invokes that  
	$\mathbb{H}^{\nu}(\Omega) \hookrightarrow W^{2\nu,2}(\Omega),$ as  $ \nu \ge 0, $
	which consequently infers the embedding 
	\begin{align}
	\mathbb{H}^{\nu}(\Omega) \hookrightarrow 
	L^{\frac{2N (1+s)}{N-4\sigma}} (\Omega). \label{nhacdoquaB}
	\end{align}
	
	Thirdly, let us set the nonlinearity   $G(v):=\rho(t) |v|^{s}v$, and show that $G$ satisfies ($\mathscr H_3$). Indeed, it is obvious that 
	$ | G(v_1)-G(v_2) | $ is pointwise bounded by $ (1+s) \left( |v_1|^{s} + |v_2|^{s} \right)|v_1 - v_2| , $ 
	and so one can derive the following chain of   estimates  
	\begin{align}
	\left\| G(v_1)-G(v_2) \right\| _{\mathbb{H}^\sigma(\Omega)} \,& \lesssim   \left\| G(v_1)-G(v_2) \right\| _{L^{\frac{2N}{N-4\sigma}}(\Omega)} \nn\\      
	\,& \lesssim \rho(t)   \left[ \big\|  |v_1|^{s}  |v_1 - v_2|   \big\| _{L^{\frac{2N}{N-4\sigma}}(\Omega)} + \big\|  |v_2|^{s}  |v_1 - v_2|   \big\| _{L^{\frac{2N}{N-4\sigma}}(\Omega)} \right]  \nn\\
	\,& \lesssim \rho(t) \left(   \left\| v_1  \right\|^{s}_{L^{\frac{2N(1+s)}{N-4\sigma}} (\Omega)} + \left\| v_2  \right\|^{s}_{L^{\frac{2N(1+s)}{N-4\sigma}} (\Omega)} \right) \left\| v_1 - v_2  \right\| _{L^{\frac{2N (1+s)}{N-4\sigma}} (\Omega)}     \nn\\
	\,& \lesssim \rho(t) \left(   \left\| v_1  \right\|^{s}_{\mathbb{H}^\nu(\Omega)} + \left\| v_2  \right\|^{s}_{\mathbb{H}^\nu(\Omega)} \right) \left\| v_1 - v_2  \right\| _{\mathbb{H}^\nu(\Omega)}   ,  \nn
	\end{align} 
	where the embedding (\ref{nhacdoquaA}) has been used in the first estimate, the pointwise boundedness in the second estimate, the Holder inequality in the third one, and the embedding (\ref{nhacdoquaB}) in the last estimate. Therefore, we can take the Lipschitz coefficient in the form $K(t)=K_{\mathrm{Gin}} \rho(t)$ with some positive constant $K_{\mathrm{Gin}}$. Furthermore, the assumption $b >- \min \left\{ \frac{1}{\alpha}-(1+s)\vartheta ; (\vartheta-\mu)-s\vartheta  \right\}$ ensures that there always exists a real constant $\zeta$ such that 
	$$-b<\zeta<\min\left\{\frac{1}{\alpha}-(1+s)\vartheta;(\vartheta-\mu)-s\vartheta\right\},$$ 
	then one derives  
	$
	K(t) \le C_{\rho} K_{\mathrm{Gin}}   t^{-\alpha\zeta} t^{\alpha ( b+\zeta)} \le K_{0} t^{-\alpha\zeta}  ,
	$
	where $K_0=C_{\rho} K_{\mathrm{Gin}}T^{\alpha (b+ \zeta)}$. We conclude that $\zeta$ and  $G$ satisfy   assumptions of   Theorem \eqref{locally1}. It is obvious that all assumptions of this theorem also fulfill the assumptions of Theorem \ref{locally1}. Thus,  applying Theorem \ref{locally1} invokes that Problem (\ref{GinLan}) has a unique mild solution 
	$u \in C^{\alpha\vartheta}((0,T];\mathbb{H}^\nu(\Omega))\cap  C^{ \,\eta_{cri} } \hspace*{-0.05cm} \left([0,T];\mathbb H^{\nu-\eta} (\Omega)\right)$ with $C_\rho$ is small enough. 
	Now, the assumption $\nu\ge \dfrac{N}{4}-\dfrac{\varrho}{s}$ implies that 
	\begin{align}
	\frac{2N}{N-4\nu} \ge \frac{2N}{N-4\left(\frac{N}{4}-\frac{\varrho}{s}\right)} = \frac{2N}{\left(4\frac{\varrho}{s}\right)} \ge \frac{2N}{\left(4\frac{N}{8}\right)} = 4,  \label{hetmuchaicayviet}
	\end{align}
	where $\dfrac{\varrho}{s} \le \dfrac{N}{8}$. Hence, we infer from  $\Omega$ is a bounded domain that 
	$ L^{\frac{2N}{N-4\nu}}(\Omega)  \hookrightarrow  L^4(\Omega) .$
	Besides, applying the embedding (\ref{nhung1a}) again combined with the above embedding allow that  
	$$ W^{2\nu,2}(\Omega) \hookrightarrow L^{\frac{2N}{N-4\nu}}(\Omega)  \hookrightarrow  L^4(\Omega) ,  $$
	where we note that $0<2\nu<\frac{N}{2}, ~ \frac{2N}{N-4\nu} = \frac{2N}{N-2(2\nu)}.$
	Therefore,  we deduce  that 
	$$u \in C^{\alpha\vartheta}((0,T];L^4(\Omega))\cap  C^{ \,\eta_{cri} } \hspace*{-0.05cm} \left([0,T];\mathbb H^{\nu-\eta} (\Omega)\right)$$
	where $\vartheta<\eta \le \vartheta+1$ as in Part b of Theorem \ref{locally1} and 
	\begin{align}
	t^{ \alpha \vartheta } \left\|u(t)\right\|_{L^4(\Omega)}\mathbf{1}_{t > 0} + \frac{\left\| u(t+\gamma)- u(t) \right\|_{\mathbb H^{\nu-\eta} (\Omega)}     }{\gamma^{\,\eta_{cri}}} \mathbf{1}_{t\ge 0} \lesssim  \norm{f} _{\mathbb{H}^{\nu+(1- \vartheta)}(\Omega)}. \nn
	\end{align} 
	This shows inequality (\ref{bogiaoduca}). Finally, we need to prove   inequality (\ref{bogiaoducca}). Indeed, we have 
	\begin{align}
	\vartheta'-\vartheta \ge \frac{4-N}{8} \quad \textrm{as} \quad \vartheta'\in  \left[ \vartheta + \frac{4-N}{8} ;1\right)   ,  \label{kieunua}
	\end{align}
	which associates with $\nu\ge \dfrac{N}{8}$ that 
	$\displaystyle   \nu+ (\vartheta'-\vartheta) \ge \frac{N}{8} + \frac{4-N}{8} = \frac{1}{2} . $ Therefore, we obtain \vspace*{0.15cm}
	\begin{itemize}
		\item The Sobolev embedding  $\displaystyle  \mathbb{H}^{\nu+(\vartheta'-\vartheta)}(\Omega)   \hookrightarrow  W^{2\nu+2(\vartheta'-\vartheta),2}(\Omega) $ holds as $\nu+(\vartheta'-\vartheta) > 0$. \vspace*{0.15cm}
		\item The Sobolev embedding $W^{2\nu+2(\vartheta'-\vartheta),2}(\Omega) \hookrightarrow W^{1,\frac{4N}{3N-8\nu}} (\Omega)$ holds  by using the embedding  (\ref{nhung1}) as $  \dfrac{4N}{3N-8\nu} \ge 1$ and $2\nu+ 2(\vartheta'-\vartheta) \ge  1$, where we note from (\ref{kieunua})  that 
		$$ 2\nu+2(\vartheta'-\vartheta) -1 \ge 2\nu + 2 \left(\frac{4-N}{8}\right) - 1 = 2\nu - \frac{N}{4} = \frac{N}{2} - \frac{N}{\left(\frac{4N}{3N-8\nu}\right)} .$$  
	\end{itemize}
	Two above  embeddings consequently infer that the Sobolev embedding 
	$\mathbb{H}^{\nu+(\vartheta'-\vartheta)}(\Omega) \hookrightarrow  W^{1,\frac{4N}{3N-8\nu}} (\Omega)$ holds. 
	Hence,  we deduce from Part a of Theorem \ref{locally1} that $u(t)\in W^{1,\frac{4N}{3N-8\nu}} (\Omega)$ with respect to the   estimate  
	\begin{align}
	t^{ \alpha\vartheta'} \big\| \nabla u(t) \big\|_{L^{\frac{4N}{3N-8\nu}}(\Omega)} + t^{ \alpha\vartheta'} \big\|(-\Delta)^{  \vartheta'-\vartheta } u(t) \big\|_{L^4(\Omega)}    \lesssim \left\|f \right\|_{\mathbb{H}^{\nu+(1-\vartheta)}(\Omega)}, \nn
	\end{align}
	which finalizes the proof of Part a of this theorem. 
	
	\vspace*{0.2cm}
	
	\noindent b)  We note from Part a that the   number $\varrho$  belongs to the interval $\left(0;\frac{1}{2}\right)$. In this part, we try to extend the method in Part a with $\varrho=\frac{1}{2}$.  It is important   to explain the similarities and differences between the numbers in this part from Part a as follows    
	\begin{itemize}
		\item $\nu\ge   \dfrac{N}{8}$ since   $\nu \ge \dfrac{N}{4}-\dfrac{1}{2s} \ge \dfrac{N}{4} - \dfrac{1}{2\left(\frac{4}{N}\right)} = \dfrac{N}{8}$  by employing  $\nu\ge \dfrac{N}{4}-\dfrac{1}{2s}$ and $s  \le \dfrac{4}{N}$. \vspace*{0.15cm}
		\item The interval $\displaystyle \left(\mu_0;  \frac{N+4}{8} \right)$ is not really empty since    $$\displaystyle  s\left(\dfrac{N}{4}-\nu\right) \le  s \left( \frac{N}{4}- \left( \frac{N}{4} - \frac{1}{2s} \right) \right) = \frac{1}{2} < \dfrac{N+4}{8}.$$   
		\item The number $\sigma$ belongs to $\left(-\dfrac{N}{4};0\right)$ since  
		$$ \nu-\mu > \frac{N}{4} -\frac{1}{2s}-\frac{N+4}{8} \ge \frac{N}{4} - \frac{1}{2\left(\frac{4}{N}\right)} - \frac{N+4}{8} = -\frac{1}{2} \ge -\frac{N}{4}, $$ 
		by also   employing  $\nu\ge \dfrac{N}{4}-\dfrac{1}{2s}$ and $s  \le \dfrac{4}{N}$.   \vspace*{0.15cm} 
	\end{itemize}
	By using the same methods as Part a, one can establish the existence and uniqueness of a mild solution $u$ to Problem (\ref{GinLan}) in 
	$ C^{\alpha\vartheta}((0,T];\mathbb{H}^\nu(\Omega))\cap  C^{ \,\eta_{cri} } \hspace*{-0.05cm} \left([0,T];\mathbb H^{\nu-\eta} (\Omega)\right)$ with $C_\rho$ is small enough. Next, the inequality (\ref{hetmuchaicayviet}) can be modified as 
	\begin{align}
	\frac{2N}{N-4\nu} \ge \frac{2N}{N-4\left(\frac{N}{4}-\frac{1}{2s}\right)} = Ns. \nn  
	\end{align}
	Hence, we obtain the Sobolev embedding 
	$$ W^{2\nu,2}(\Omega) \hookrightarrow L^{\frac{2N}{N-4\nu}}(\Omega)  \hookrightarrow  L^{Ns}(\Omega) , $$
	which implies inequality (\ref{bogiaoducc}). Moreover, we also have $\nu+(\vartheta'-\vartheta)\ge \frac{1}{2}$ by noting the assumption $\vartheta'\in  \left[ \vartheta + \frac{4-N}{8} ;1\right)$ and the fact that $\nu\ge \frac{N}{8}$. Then, we obtain  the Sobolev  embedding $$\mathbb{H}^{\nu+(\vartheta'-\vartheta)}(\Omega) \hookrightarrow  W^{1,\frac{4N}{3N-8\nu}} (\Omega),$$ and   inequality (\ref{bogiaoduccb}) also holds. We finally complete the proof.
\end{proof}
\subsection{Time fractional Burgers equation}
In this subsection, we deal with a terminal  value problem for a {\it time fractional Burgers equation}  which
is given by 
\begin{align}
\left\{ \begin{array}{llllllcccccc}
\displaystyle  \partial_t^{\,\alpha }  u(x,t)   + \rho(t) (u\cdot \nabla)u (x,t) &=& \Delta u(x,t) , &&  x\in \Omega,~0<t<T,  \\
u(x,t)   &=&0,   && \hspace*{1.27cm} 0<t<T, \\
\partial_t  u(x,0) &=& 0, && x\in \Omega,  
\end{array}
\right. \label{BurPro}
\end{align}
associated with the final value data 
\eqref{mainprob4}. 
Here    $f$ and $\rho$ are given functions, and the operator $\mathcal A$ is $-\Delta$ which acts on $L^2(\Omega)$ with its domain $W_0^{1,2}(\Omega) \cap  W^{2,2}(\Omega)$.   In the following, we will apply Theorem \ref{locally1}  to obtain a mild solution of Problem (\ref{BurPro}), and then obtain the spatial regularity with an $L^q$-estimate for $\nabla u$ and a  $\mathbb{H}^\nu $-estimate for $(-\Delta)^{\vartheta'-\vartheta}u$.

\begin{theorem} Assume that $3\le N\le 4$ ($N$ is  dimension of $\Omega$). Let the numbers $\nu$, $\sigma$, $\mu$, $\vartheta$, $\vartheta'$ respectively satisfy that 
	$$\nu\in \left[ \frac{1}{2};\frac{N}{4} \right), \quad \mu \in \left[\mu_2 ;    \frac{N+4}{8}\right) , \quad          ~ \vartheta\in \left( \mu;\frac{N+4}{8}\right), ~ \vartheta'\in  \left[ \vartheta + \frac{4-N}{8} ;1\right) ,$$ 
	where $\mu=\nu-\sigma$ and $\mu_2:=\max \left\{ \nu;\frac{N+2}{4}-\nu \right\}$. If   $\rho(t)\le C_{\rho} t^{b}$ with $b >- \min \left\{-\mu;\frac{1}{\alpha}-2\vartheta \right\}$,   $f\in \mathbb{H}^{\nu+(1-\vartheta)}$, and $C_\rho$ is small enough, then Problem (\ref{BurPro}) has a unique mild solution $u$ such that \vspace*{0.1cm} 
	\begin{itemize}
		\item[a)] \textbf{(Time regularity)}
		Let $\vartheta<\eta \le \vartheta+1$. Then we have
		$$ u\in  C^{\alpha\vartheta}((0,T];L^{\frac{2N}{4\mu-2}}(\Omega)) \cap   C^{ \,\eta_{cri} } \hspace*{-0.05cm} \left([0,T];\mathbb H^{\nu-\eta} (\Omega)\right),$$ 
		and  time  regularity result for $u$ holds
		\begin{align}
		t^{ \alpha \vartheta } \left\|u(t)\right\|_{L^{\frac{2N}{4\mu-2}}(\Omega)} + \frac{\left\| u(t+\gamma)- u(t) \right\|_{\mathbb H^{\nu-\eta} (\Omega)}     }{\gamma^{\,\eta_{cri}}} \lesssim  \norm{f} _{\mathbb{H}^{\nu+(1- \vartheta)}(\Omega)} . \label{bogiaoduca}
		\end{align}
		\item[b)] \textbf{(Spatial regularity)} For each $t>0$, $u(t)$ belongs to $W^{1,\frac{4N}{3N-4}}(\Omega)$ and satisfies the following estimate
		\begin{align}
		t^{ \alpha\vartheta'} \big\| \nabla u(t) \big\|_{L^{\frac{4N}{3N-4}}(\Omega)} + t^{ \alpha\vartheta'}  \big\|(-\Delta)^{  \vartheta'-\vartheta } u(t) \big\|_{\mathbb H^{\nu } (\Omega) }   \lesssim \left\|f \right\|_{\mathbb{H}^{\nu+(1-\vartheta)}(\Omega)}. \label{bogiaoduc}
		\end{align}
	\end{itemize} 
\end{theorem}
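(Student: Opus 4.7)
The plan is to apply Theorem \ref{locally1} to the nonlinearity $G(t,v)=-\rho(t)(v\cdot\nabla)v$, which fits the critical framework with exponent $s=1$, and then translate the resulting $\mathbb{H}^{\nu}$- and $\mathbb{H}^{\nu+(\vartheta'-\vartheta)}$-bounds into the stated $L^{\frac{2N}{4\mu-2}}$- and $W^{1,\frac{4N}{3N-4}}$-bounds through Sobolev embeddings, exactly mirroring the Ginzburg-Landau application. I would first verify parameter admissibility: for $3\le N\le 4$ and $\nu\in[1/2,N/4)$, the choice $\mu\ge\mu_2$ keeps $\sigma=\nu-\mu\in(-N/4,0)$, the interval $(\mu,(N+4)/8)$ for $\vartheta$ is non-empty, and the assumption $b>-\min\{-\mu,\alpha^{-1}-2\vartheta\}$ allows a real $\zeta$ to exist satisfying both $L_3(t)t^{\alpha\zeta}\in L^{\infty}(0,T)$ (when $L_3(t)=K_{\mathrm{Bur}}\rho(t)\le K_{\mathrm{Bur}}C_{\rho}t^{b}$) and the constraint $\zeta<\min\{\alpha^{-1}-2\vartheta,-\mu\}$ required by Theorem \ref{locally1} for $s=1$.

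The technical heart is the verification of $(\mathscr H_3)$. Starting from the identity
\begin{equation*}
(v_1\cdot\nabla)v_1-(v_2\cdot\nabla)v_2=((v_1-v_2)\cdot\nabla)v_1+(v_2\cdot\nabla)(v_1-v_2),
\end{equation*}
the embedding $L^{\frac{2N}{N-4\sigma}}(\Omega)\hookrightarrow\mathbb{H}^{\sigma}(\Omega)$ (valid since $\sigma\in(-N/4,0)$, as established in the Ginzburg-Landau application via $L^{q}\hookrightarrow W^{2\sigma,2}\hookrightarrow\mathbb{H}^{\sigma}$) reduces the $\mathbb{H}^{\sigma}$-norm of each term to an $L^{\frac{2N}{N-4\sigma}}$-norm of a product. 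Splitting by H\"older with exponents $1/p_1+1/p_2=(N-4\sigma)/(2N)$, one factor is controlled by $\mathbb{H}^{\nu}\hookrightarrow L^{p_1}$ (requiring $p_1\le 2N/(N-4\nu)$) and the gradient factor by $\mathbb{H}^{\nu}\hookrightarrow W^{2\nu,2}\hookrightarrow W^{1,p_2}$ (requiring $\nu\ge 1/2$ and $p_2\le 2N/(N-4\nu+2)$), giving
\begin{equation*}
\|G(t,v_1)-G(t,v_2)\|_{\mathbb{H}^{\sigma}(\Omega)}\lesssim\rho(t)\bigl(\|v_1\|_{\mathbb{H}^{\nu}(\Omega)}+\|v_2\|_{\mathbb{H}^{\nu}(\Omega)}\bigr)\|v_1-v_2\|_{\mathbb{H}^{\nu}(\Omega)}.
\end{equation*}
The exponent compatibility $1/p_1+1/p_2\le(N-4\sigma)/(2N)$ boils down to $\mu\ge(N+2)/4-\nu$, which is precisely the floor $\mu_2$ in the hypothesis. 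With $K_0=K_{\mathrm{Bur}}C_{\rho}T^{\alpha(b+\zeta)}$, smallness of $C_{\rho}$ makes $K_0T^{\alpha\vartheta}$ small enough to invoke Theorem \ref{locally1}, producing a unique mild solution $u\in C^{\alpha\vartheta}((0,T];\mathbb{H}^{\nu}(\Omega))\cap C^{\eta_{cri}}([0,T];\mathbb{H}^{\nu-\eta}(\Omega))$ satisfying (\ref{ubound}).

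Finally, the claimed inequalities follow by upgrading these bounds through Sobolev embeddings. For (\ref{bogiaoduca}), $\mu\ge\mu_2$ gives $4\mu-2\ge N-4\nu$, so the chain $\mathbb{H}^{\nu}(\Omega)\hookrightarrow W^{2\nu,2}(\Omega)\hookrightarrow L^{\frac{2N}{4\mu-2}}(\Omega)$ upgrades the $\mathbb{H}^{\nu}$-estimate to the stated $L^{\frac{2N}{4\mu-2}}$-estimate, while the $\mathbb{H}^{\nu-\eta}$ H\"older-continuity comes directly from Part b of Theorem \ref{locally1}. For (\ref{bogiaoduc}), Part a of the same theorem supplies $\|u(t)\|_{\mathbb{H}^{\nu+(\vartheta'-\vartheta)}(\Omega)}\lesssim t^{-\alpha\vartheta'}\|f\|_{\mathbb{H}^{\nu+(1-\vartheta)}(\Omega)}$; the identity $\|(-\Delta)^{\vartheta'-\vartheta}u\|_{\mathbb{H}^{\nu}}=\|u\|_{\mathbb{H}^{\nu+(\vartheta'-\vartheta)}}$ handles the second term, and the embedding $\mathbb{H}^{\nu+(\vartheta'-\vartheta)}(\Omega)\hookrightarrow W^{1,\frac{4N}{3N-4}}(\Omega)$, which reduces to $2(\nu+\vartheta'-\vartheta)-1\ge(4-N)/4$ (automatic from $\nu\ge 1/2$ and $\vartheta'-\vartheta\ge(4-N)/8$), controls $\|\nabla u(t)\|_{L^{\frac{4N}{3N-4}}(\Omega)}$.

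The main obstacle is the $(\mathscr H_3)$ verification: the first-order derivative inside the nonlinearity forces one to balance two competing Sobolev embeddings on $v$ and $\nabla v$, and it is precisely this balancing that simultaneously forces the floor $\mu_2=\max\{\nu,(N+2)/4-\nu\}$, the lower bound $\nu\ge 1/2$, and the dimensional restriction $3\le N\le 4$ (so that the interval for $\nu$ is non-empty and compatible with the target spaces). Everything else is a routine chain of embeddings layered on top of the abstract existence result of Theorem \ref{locally1}.
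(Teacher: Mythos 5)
Your proposal is correct and follows essentially the same route as the paper's own proof: the same verification of $(\mathscr H_3)$ with $s=1$ via the bilinear splitting of $(v_1\cdot\nabla)v_1-(v_2\cdot\nabla)v_2$, the same H\"older splitting against the dual pair of exponents tied to $L^{\frac{2N}{4\mu-2}}$ and $W^{1,\frac{2N}{N+2-4\nu}}$ (which is exactly what forces $\mu\ge\frac{N+2}{4}-\nu$ and $\nu\ge\frac12$), the same choice of $\zeta$ with $-b<\zeta<\min\{-\mu;\alpha^{-1}-2\vartheta\}$, and the same final Sobolev-embedding upgrades of the conclusions of Theorem \ref{locally1}. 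No gaps.
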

\begin{proof} 
	In order to prove this theorem, we will apply Theorem \ref{locally1}, and then     improve the time and spatial regularities of the mild solution.  Let us set $G(t,v)=-\rho(t)( v \cdot \nabla) v$ and  show that $G$ satisfies   assumption ($\mathscr H_3$) corresponding to $s=1$.  Firstly, we analyze the values of the numbers $\nu$, $\sigma$, $\mu$, $\vartheta$, $\vartheta'$ as follows
	\begin{itemize}
		\item The interval $\left[\mu_2 ;    \dfrac{N+4}{8}\right)$ is not empty since $\displaystyle \nu <\frac{N+4}{8}$ as $\displaystyle \nu<\frac{N}{4}  \le \frac{N+4}{8}$ (here $N\le  4$), and  
		$ \frac{N+2}{4} - \nu <\frac{N+2}{4} - \frac{1}{2}   <\frac{N+2}{4} - \frac{N}{8} = \frac{N+4}{8}.   $
		\item The numbers $\displaystyle \frac{N-4\sigma}{4\mu-2}$, $\displaystyle \frac{N-4\sigma}{N+2-4\nu}$ are greater than $1$. Indeed, $\mu\ge \mu_2 \ge \nu \ge \dfrac{1}{2}$, and  
		\begin{align*}
		\,& \frac{N-4\sigma}{4\mu-2} =  \frac{N+2-4\nu}{4\mu-2} + 1 > \frac{N+2-4\left(\frac{N}{4}\right)}{4\mu-2} + 1 > 1;\\
		\,& \frac{N-4\sigma}{N+2-4\nu} =  \frac{ 4\mu-2}{N+2-4\nu} + 1 > 1;
		\end{align*}
		Moreover, these are  the dual numbers of each other. \vspace*{0.15cm}
	\end{itemize}
	Therewith, one can obtain  the following chains of the  Sobolev embeddings  by applying (\ref{nhung1}), (\ref{nhung1a}), and (\ref{nhung1b}). Indeed, we have  
	\begin{itemize}
		\item The  Sobolev embedding $\displaystyle L^{\frac{2N}{N-4\sigma}}(\Omega) \hookrightarrow W^{2\sigma,2}(\Omega)$ holds as $ -\dfrac{N}{2}<2\sigma\le 0$, $\dfrac{2N}{N-4\sigma}=\dfrac{2N}{N-2(2\sigma)}$, and $W^{2\sigma,2}(\Omega) \hookrightarrow \mathbb{H}^\sigma(\Omega)$ as $\sigma\le 0$, and so that
		\begin{align}
		L^{\frac{2N}{N-4\sigma}}(\Omega) \hookrightarrow W^{2\sigma,2}(\Omega) \hookrightarrow \mathbb{H}^\sigma(\Omega) .\label{buoisangonhaA}
		\end{align}
		\item  The  Sobolev embedding $\mathbb{H}^{\nu}(\Omega) \hookrightarrow W^{2\nu,2}(\Omega)$ holds  as $\nu\ge 0$, and $W^{2\nu,2}(\Omega) \hookrightarrow W^{1,\frac{2N}{N+2-4\nu}}(\Omega)$ as $\nu\ge \dfrac{1}{2}$,  $2\nu-1 = \dfrac{N}{2}-N/ \left(\frac{2N}{N+2-4\nu}\right) $ which implies   the following  Sobolev embedding
		\begin{align}
		\mathbb{H}^{\nu}(\Omega) \hookrightarrow W^{2\nu,2}(\Omega) \hookrightarrow W^{1,\frac{2N}{N+2-4\nu}}(\Omega).\label{buoisangonhaB}
		\end{align}
		\item  The  Sobolev embedding $W^{2\nu,2}(\Omega) \hookrightarrow L^{ \frac{2N}{ 4\mu-2}}(\Omega)$ holds as $0<2\nu<\dfrac{N}{2}$, $1\le \dfrac{2N}{4\mu-2} \le \dfrac{2N}{N-4\nu}$ ($\nu<\mu$), and henceforth 
		\begin{align}
		\mathbb{H}^{\nu}(\Omega) \hookrightarrow W^{2\nu,2}(\Omega) \hookrightarrow L^{ \frac{2N}{ 4\mu-2}}(\Omega).\label{buoisangonhaC}
		\end{align}
		\item  The  Sobolev embedding $\mathbb{H}^{\nu+(\vartheta'-\vartheta)} (\Omega)\hookrightarrow W^{2\nu+2(\vartheta'-\vartheta),2}(\Omega) \hookrightarrow W^{2-\frac{N}{4},2}(\Omega)$ holds since $\nu+(\vartheta'-\vartheta) \ge \dfrac{1 }{2} + \dfrac{4-N}{8} = 1 - \dfrac{N}{8}$ by using the assumption $\vartheta'\in  \left[ \vartheta + \dfrac{4-N}{8} ;1\right)  $. In addition,  $W^{2-\frac{N}{4},2}(\Omega)  \hookrightarrow W^{1,\frac{4N}{3N-4}}(\Omega)$ as $2-\dfrac{N}{4} \ge  1 $, $2-\dfrac{N}{4}-1=\frac{N}{2}-N/\left(\dfrac{4N}{3N-4}\right)$. Therefore,  we obtain the following Sobolev embedding
		\begin{align}
		\mathbb{H}^{\nu+(\vartheta'-\vartheta)} (\Omega)\hookrightarrow W^{2\nu+2(\vartheta'-\vartheta),2}(\Omega) \hookrightarrow W^{2-\frac{N}{4},2}(\Omega)  \hookrightarrow W^{1,\frac{4N}{3N-4}}(\Omega) .\label{buoisangonhaD}
		\end{align}
	\end{itemize}
On account of  the above embeddings,   the the H\"older inequality  we deduce the following chain of   estimates 
	\begin{align*}
	\left\| G(v_1)-G(v_2)  \right\|_{\mathbb{H}^{{\sigma}}(\Omega)} \,&\lesssim \rho(t) \left( \left\| (v_1\cdot \nabla )(v_1-v_2)  \right\|_{\mathbb{H}^{{\sigma}}(\Omega)} + \left\| ((v_1-v_2)\cdot \nabla ) v_2   \right\|_{\mathbb{H}^{{\sigma}}(\Omega)} \right) \nn\\
	\,&\lesssim \rho(t) \left( \left\| (v_1\cdot \nabla )(v_1-v_2)  \right\|_{L^{\frac{2N}{N-4\sigma}}(\Omega)} + \left\| ((v_1-v_2)\cdot \nabla ) v_2   \right\|_{L^{\frac{2N}{N-4\sigma}}(\Omega)} \right) \nn\\
	\,&\lesssim \rho(t) \left( \left\| v_1  \right\|_{L^{\frac{2N}{N-4\sigma}\frac{N-4\sigma}{4\mu-2}}(\Omega)} \left\| \nabla( v_1-v_2)  \right\|_{L^{\frac{2N}{N-4\sigma}\frac{N-4\sigma}{N+2-4\nu}}(\Omega)} \right. \nn\\
	\,& \hspace*{3.1cm} + \left. \left\| v_1-v_2  \right\|_{L^{\frac{2N}{N-4\sigma}\frac{N-4\sigma}{4\mu-2}}(\Omega)} \left\| \nabla v_2   \right\|_{L^{\frac{2N}{N-4\sigma}\frac{N-4\sigma}{N+2-4\nu}}(\Omega)} \right) \nn\\
	\,&  = \rho(t) \left( \left\| v_1  \right\|_{L^{\frac{2N}{4\mu-2} }(\Omega)} \left\| \nabla( v_1-v_2)  \right\|_{L^{\frac{2N}{N+2-4\nu} }(\Omega)} \right. \nn\\
	\,& \hspace*{3.1cm} + \left. \left\| v_1-v_2  \right\|_{L^{\frac{2N}{4\mu-2} }(\Omega)} \left\| \nabla v_2   \right\|_{L^{\frac{2N}{N+2-4\nu} }(\Omega)} \right) \nn\\
	\,& \lesssim \rho(t) \left( \left\| v_1  \right\|_{\mathbb{H}^\nu(\Omega)} + \left\| v_2  \right\|_{\mathbb{H}^\nu(\Omega)} \right)  \left\| v_1- v_2  \right\|_{\mathbb{H}^\nu(\Omega)}, 
	\end{align*}
	where the chain (\ref{buoisangonhaA}) has been used in the first estimate, the triangle inequality in the second estimate, the Holder inequality   
	with the dual numbers $  \frac{N-4\sigma}{4\mu-2}$, $ \frac{N-4\sigma}{N+2-4\nu}$  in the third one,  the chains (\ref{buoisangonhaB}) and (\ref{buoisangonhaC}) in the last one.   		
	This means that $G$ is really a critical nonlinearity from $\mathbb{H}^{\nu}(\Omega)$ to $\mathbb{H}^{{\sigma}}(\Omega)$ with respect to $s=1$ and  $\mathfrak{N}(v_1,v_2)=\left\| v_1   \right\|_{\mathbb{H}^{\nu}(\Omega)} + \left\| v_2  \right\|_{\mathbb{H}^{\nu}(\Omega)}$. Furthermore, we can write  $K(t) = \rho(t)K_{\mathrm{Bur}}$ with some positive constant $K_{\mathrm{Bur}}$.    Let us take $\zeta$ satisfying that $$-b<\zeta<\min\left\{-\mu;\frac{1}{\alpha}-2\vartheta\right\},$$ then one has 
	$
	K(t) \le   K_{0} t^{-\alpha\zeta}  ,
	$
	where $K_0=C_{\rho} K_{\mathrm{Bur}}T^{\alpha (b+ \zeta)}$. Due to the above arguments, we consequently conclude that  $G$ fulfills the assumption ($\mathscr H_3$). One can check that all numbers in this theorem obviously satisfy the assumptions of Theorem \eqref{locally1}.   
	Hence, we can apply  Theorem \eqref{locally1} and the chain (\ref{buoisangonhaC}) ensures that Problem (\ref{BurPro}) has a unique mild solution $$ u \in    C^{\alpha\vartheta}((0,T]; L^{\frac{2N}{4\mu-2}}(\Omega) ) \cap  C^{ \,\eta_{cri} } \hspace*{-0.05cm} \left([0,T];\mathbb H^{\nu-\eta} (\Omega)\right), $$ 
	with $C_\rho$ being small enough.  Besides, the boundedness (\ref{ubound}) and   
	Part b of Theorem \eqref{locally1}  can be combined to imply the following estimate
	\begin{align}
	t^{ \alpha \vartheta } \left\|u(t)\right\|_{L^{\frac{2N}{4\mu-2}}(\Omega)} + \frac{\left\| u(t+\gamma)- u(t) \right\|_{\mathbb H^{\nu-\eta} (\Omega)}     }{\gamma^{\,\eta_{cri}}} \lesssim  \norm{f} _{\mathbb{H}^{\nu+(1- \vartheta)}(\Omega)},  \nn
	\end{align}
	i.e.,   inequality (\ref{bogiaoduca}) is easily obtained.   We now   prove the spatial regularity. Indeed,      Part a of Theorem \eqref{locally1} can be rewritten as $(-\Delta)^{  \vartheta'-\vartheta } u(t)\in \mathbb H^{\nu } (\Omega)$ with respect to the estimate   
	\begin{align*}
	\big\|(-\Delta)^{  \vartheta'-\vartheta } u(t) \big\|_{ L^{\frac{2N}{4\mu-2}}(\Omega)} \lesssim \big\|(-\Delta)^{  \vartheta'-\vartheta } u(t) \big\|_{\mathbb H^{\nu } (\Omega) } \lesssim t^{-\alpha\vartheta'} \left\|f \right\|_{\mathbb{H}^{\nu+(1-\vartheta)}(\Omega)} .    
	\end{align*}
	On the other hand, by using the chain (\ref{buoisangonhaD}), 
	we deduce   that 
	\begin{align*}
	\big\| \nabla u(t) \big\|_{L^{\frac{4N}{3N-4}}(\Omega)} \lesssim  \big\|(-\Delta)^{  \vartheta'-\vartheta } u(t) \big\|_{\mathbb H^{\nu } (\Omega) } \lesssim t^{-\alpha\vartheta'} \left\|f \right\|_{\mathbb{H}^{\nu+(1-\vartheta)}(\Omega)} ,  
	\end{align*}
	which implies   inequality (\ref{bogiaoduc}).   
\end{proof}

\section{Proof of Theorem \ref{t1}, Theorem \ref{theo2}, and Theorem \ref{locally1} }
 	In this section, we provide  full proofs  for Theorem \ref{t1}, Theorem \ref{theo2}, and Theorem \ref{locally1}. In Subsection \ref{prooft1}, we prove  Theorem \ref{t1} by using some new techniques of the Picard approximation method. In Subsection \ref{prooft2}, we show Theorem \ref{theo2} by applying Banach fixed point theorem. And we end the section by proving Theorem \ref{locally1} in Subsection \ref{prooft3}.  For the sake of convenience, we will list some important constants, which cannot find in the proofs,	
in the part \textbf{(AP.4)} of the Appendix.  

\subsection{Proof of Theorems \ref{t1}} \label{prooft1}
Let us begin with the proof of  Theorem \ref{t1} by   using Picard's approximation method.  We   construct a Picard sequence defined by Lemma \ref{GSLVDanh}.  With some appropriate assumptions, we will bound the sequence  by a power function.  Then, we can prove it is a Cauchy sequence in a Banach space as Lemma \ref{GSLVDanhb}.  Now, we consider two following lemmas. 
\begin{lemma} \label{GSLVDanh}
	Let the Picard  sequence $\{w^{(k)}\}_{k=1,2,...}$ be   defined by 
	$\displaystyle w^{(1)}(t) = f, $ and
	\begin{align} \label{picardse}
	w^{(k+1)}(t)=   {\bf B}_{\alpha }(t,  T) f &+ \int_0^t  {\bf P}_{\alpha }  (t-r) G(r,w^{(k)}(r)) dr \nn\\
	&- \int_0^{ T}  {\bf B}_{\alpha }(t,  T)  {\bf P}_{\alpha }  (T-r) G(r,w^{(k)}(r)) dr, \quad 0\le t\le T. 
	\end{align}
	Then,  for all  $t>0$, $k\in\mathbb{N}$, $k\ge 1$,  it holds 
	\begin{align} \label{result}
	\norm{w^{(k)}(t)}_{\mathbb H^\nu({\Omega})} \le \mathcal N_{1}  t^{-\alpha (1-{ \theta }  )}  \|f \|_{   \mathbb H^{\theta+\nu}(\Omega) }  .
	\end{align} 
	where $\mathcal N_{1}  $ is given by \textbf{(AP.4.)} in the Appendix.
\end{lemma}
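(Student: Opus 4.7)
The plan is to prove the bound (\ref{result}) by induction on $k$. For the base case $k=1$, since $w^{(1)}(t)=f$, the Sobolev embedding $\mathbb{H}^{\nu+\theta}(\Omega)\hookrightarrow \mathbb{H}^{\nu}(\Omega)$ from (\ref{Embed1}) together with the trivial inequality $1 \le T^{\alpha(1-\theta)}\,t^{-\alpha(1-\theta)}$ for $t\in(0,T]$ yields $\|f\|_{\mathbb{H}^\nu(\Omega)} \le C_1(\nu,\theta)\,T^{\alpha(1-\theta)}\,t^{-\alpha(1-\theta)}\|f\|_{\mathbb{H}^{\nu+\theta}(\Omega)}$. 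For the inductive step, assuming the bound holds at level $k$, I would split $w^{(k+1)}(t)$ into the three pieces of (\ref{picardse}), namely $\mathcal I_1(t):={\bf B}_\alpha(t,T)f$, $\mathcal I_2(t):=\int_0^t {\bf P}_\alpha(t-r)G(r,w^{(k)}(r))\,dr$, and $\mathcal I_3(t):=\int_0^T {\bf B}_\alpha(t,T){\bf P}_\alpha(T-r)G(r,w^{(k)}(r))\,dr$, and estimate each in $\mathbb{H}^\nu(\Omega)$ via Parseval.

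For $\mathcal I_1$, applying Lemma \ref{nayvesom}(b) directly gives the bound $\|\mathcal I_1(t)\|_{\mathbb{H}^\nu(\Omega)}\lesssim t^{-\alpha(1-\theta)}\|f\|_{\mathbb{H}^{\nu+\theta}(\Omega)}$, since the factor $\lambda_j^{2\theta}$ pulled out is absorbed into the stronger norm of $f$. For $\mathcal I_2$, I would use Lemma \ref{MLineqna} to estimate $(t-r)^{\alpha-1}|E_{\alpha,\alpha}(-\lambda_j (t-r)^\alpha)|\le M_\alpha (t-r)^{\alpha-1}$, combined with the Lipschitz assumption ($\mathscr H_1$) so that $\|G(r,w^{(k)}(r))\|_{\mathbb{H}^\nu(\Omega)}\le L_1(r)\|w^{(k)}(r)\|_{\mathbb{H}^\nu(\Omega)}$, and then invoke the inductive hypothesis to pick up the singular factor $r^{-\alpha(1-\theta)}$. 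The resulting time integral $\int_0^t (t-r)^{\alpha-1}r^{-\alpha(1-\theta)}\,dr$ is a Beta integral equal to $t^{\alpha\theta}B(\alpha,1-\alpha(1-\theta))$, which converges because $\theta>(\alpha-1)/\alpha$ ensures $\alpha(1-\theta)<1$; bounding $t^{\alpha\theta}\le T^\alpha t^{-\alpha(1-\theta)}$ delivers the desired singularity.

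The main obstacle is $\mathcal I_3$: the source $G(r,w^{(k)}(r))$ lives only in $\mathbb{H}^\nu(\Omega)$, and the inverse Mittag--Leffler factor inside ${\bf B}_\alpha(t,T)$ supplies an unpleasant $\lambda_j^\theta$ growth in $j$ by Lemma \ref{nayvesom}(b), so one cannot bound it by simply multiplying the two operator norms. The trick is to fuse the two Mittag--Leffler factors before taking norms and balance them with the same parameter $\theta$: apply Lemma \ref{nayvesom}(a) to obtain $(T-r)^{\alpha-1}E_{\alpha,\alpha}(-\lambda_j(T-r)^\alpha)\lesssim \lambda_j^{-\theta}(T-r)^{\alpha(1-\theta)-1}$ and Lemma \ref{nayvesom}(b) to obtain $\mathscr E_{\alpha,T}(-\lambda_j t^\alpha)\lesssim \lambda_j^{\theta}t^{-\alpha(1-\theta)}$. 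The $\lambda_j^{\pm\theta}$ factors cancel inside the spectral sum, leaving $\|{\bf B}_\alpha(t,T){\bf P}_\alpha(T-r)v\|_{\mathbb{H}^\nu(\Omega)}\lesssim t^{-\alpha(1-\theta)}(T-r)^{\alpha(1-\theta)-1}\|v\|_{\mathbb{H}^\nu(\Omega)}$. Combining with the inductive hypothesis for $w^{(k)}(r)$ reduces $\|\mathcal I_3(t)\|_{\mathbb{H}^\nu(\Omega)}$ to a Beta integral $\int_0^T(T-r)^{\alpha(1-\theta)-1}r^{-\alpha(1-\theta)}\,dr=B(\alpha(1-\theta),1-\alpha(1-\theta))$, which is finite under the same constraint on $\theta$.

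Summing the three pieces yields a bound of the form $\|w^{(k+1)}(t)\|_{\mathbb{H}^\nu(\Omega)}\le t^{-\alpha(1-\theta)}\|f\|_{\mathbb{H}^{\nu+\theta}(\Omega)}\bigl[C_\ast + \mathscr M_1\,\|L_1\|_{L^\infty(0,T)}\,\mathcal N_1\bigr]$, where $C_\ast$ and $\mathscr M_1$ are explicit constants collecting the estimates above. To close the induction I would choose $\mathcal N_1:=C_\ast/(1-\mathscr M_1\|L_1\|_{L^\infty(0,T)})$, which is finite precisely because $\|L_1\|_{L^\infty(0,T)}<\mathscr M_1^{-1}$, and verify that $\mathcal N_1$ also dominates the constant needed in the base case, after possibly enlarging it by the factor $C_1(\nu,\theta)T^{\alpha(1-\theta)}$. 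This completes the inductive step and proves (\ref{result}) with the explicit $\mathcal N_1$ recorded in \textbf{(AP.4.)}.
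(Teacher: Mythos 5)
Your proposal follows essentially the same route as the paper: induction on $k$, the three-term decomposition of \eqref{picardse}, the crucial cancellation of the $\lambda_j^{\pm\theta}$ factors from Lemma \ref{nayvesom} when fusing ${\bf B}_\alpha(t,T)$ with ${\bf P}_\alpha(T-r)$ in the third term, Beta-integral bounds valid because $\alpha(1-\theta)<1$, and closing the induction by solving $\mathcal N_1=C_\ast/(1-\mathscr M_1\|L_1\|_{L^\infty(0,T)})$. The only (immaterial) deviation is in the second term, where you keep the kernel $(t-r)^{\alpha-1}$ and absorb $t^{\alpha\theta}\le T^\alpha t^{-\alpha(1-\theta)}$, whereas the paper first trades $\lambda_j^{-\theta}\le\lambda_1^{-\theta}$ for the kernel $(t-r)^{\alpha(1-\theta)-1}$; this changes only the explicit constant.
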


\begin{lemma} \label{GSLVDanhb} Let $\left\{u^{(k)}\right\}_{k=1,2,...}$ be
	the sequence defined by Lemma \ref{GSLVDanh}, then it 	
	is a bounded and Cauchy sequence in the Banach space  $ L^p (0, T ;   \mathbb H^\nu({\Omega}))$ with $p\in \left[1;\frac{1}{\alpha(1-\theta)}\right)$.
\end{lemma}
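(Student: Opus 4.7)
The plan is to first obtain boundedness in $L^p(0,T;\mathbb{H}^\nu(\Omega))$ as a direct consequence of the pointwise bound in Lemma \ref{GSLVDanh}, and then to establish the Cauchy property by showing that the Picard map is a strict contraction in the weighted supremum norm
\[
\|w\|_\star := \sup_{0<t\le T} t^{\alpha(1-\theta)}\,\|w(t)\|_{\mathbb{H}^\nu(\Omega)}.
\]
Boundedness is immediate: the estimate $\|w^{(k)}(t)\|_{\mathbb{H}^\nu(\Omega)} \le \mathcal N_{1} t^{-\alpha(1-\theta)}\|f\|_{\mathbb{H}^{\nu+\theta}(\Omega)}$ from Lemma \ref{GSLVDanh} is uniform in $k$, and the condition $p\alpha(1-\theta)<1$ makes $t^{-\alpha(1-\theta)}$ lie in $L^p(0,T)$, yielding a uniform $L^p$ bound on $\{w^{(k)}\}$.

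For the contraction, subtracting two consecutive iterates gives
\begin{align*}
w^{(k+1)}(t)-w^{(k)}(t) &= \int_0^t {\bf P}_{\alpha}(t-r)\bigl[G(r,w^{(k)}(r))-G(r,w^{(k-1)}(r))\bigr]\,dr \\
&\quad - \int_0^T {\bf B}_{\alpha}(t,T)\,{\bf P}_{\alpha}(T-r)\bigl[G(r,w^{(k)}(r))-G(r,w^{(k-1)}(r))\bigr]\,dr.
\end{align*}
Invoking $(\mathscr H_1)$ together with the spectral operator bounds
\[
\|{\bf P}_{\alpha}(t-r)g\|_{\mathbb H^\nu(\Omega)}\lesssim (t-r)^{\alpha-1}\|g\|_{\mathbb H^\nu(\Omega)}, \qquad \|{\bf B}_{\alpha}(t,T){\bf P}_{\alpha}(T-r)g\|_{\mathbb H^\nu(\Omega)}\lesssim t^{-\alpha(1-\theta)}(T-r)^{\alpha(1-\theta)-1}\|g\|_{\mathbb H^\nu(\Omega)},
\]
which follow from the spectral representation combined with the Mittag--Leffler bounds in Lemmas \ref{MLineqna} and \ref{nayvesom} (the second estimate is obtained by applying parts (a) and (b) of Lemma \ref{nayvesom} with the same exponent $\theta$, so the spectral factors $\lambda_j^{\pm\theta}$ cancel), and using the pointwise control $\|w^{(k)}(r)-w^{(k-1)}(r)\|_{\mathbb H^\nu(\Omega)} \le r^{-\alpha(1-\theta)}\|w^{(k)}-w^{(k-1)}\|_\star$, the two time integrals reduce to the Beta function identities
\[
\int_0^t(t-r)^{\alpha-1}r^{-\alpha(1-\theta)}dr = t^{\alpha\theta}B(\alpha,1-\alpha(1-\theta)), \qquad \int_0^T(T-r)^{\alpha(1-\theta)-1}r^{-\alpha(1-\theta)}dr = B(\alpha(1-\theta),1-\alpha(1-\theta)).
\]
Multiplying by $t^{\alpha(1-\theta)}$ and taking the supremum over $t\in(0,T]$ then yields
\[
\|w^{(k+1)}-w^{(k)}\|_\star \le \mathscr M_1\,\|L_1\|_{L^\infty(0,T)}\,\|w^{(k)}-w^{(k-1)}\|_\star,
\]
where $\mathscr M_1$ (as announced in the Appendix) collects the two Beta constants and a factor $T^\alpha$. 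The hypothesis $\|L_1\|_{L^\infty(0,T)}<\mathscr M_1^{-1}$ then produces a strict contraction, hence geometric decay of $\|w^{(k+1)}-w^{(k)}\|_\star$ and the Cauchy property in $\|\cdot\|_\star$.

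To transfer to $L^p$, apply the pointwise inequality $\|w^{(k+1)}(t)-w^{(k)}(t)\|_{\mathbb H^\nu(\Omega)} \le t^{-\alpha(1-\theta)}\|w^{(k+1)}-w^{(k)}\|_\star$ and integrate, invoking once more that $p<1/\alpha(1-\theta)$. The main obstacle throughout is the backward integral on $(0,T)$: a direct Hölder estimate inside $L^p$ fails, since the singular kernel $(T-r)^{\alpha(1-\theta)-1}$ lies outside the dual space $L^{p'}(0,T)$ precisely in the regime $p<1/\alpha(1-\theta)$ of interest. Routing the contraction through the weighted sup norm $\|\cdot\|_\star$ circumvents this obstacle by absorbing the initial-time singularity of the iterates into the convergent Beta integral, which is the key technical device of the proof.
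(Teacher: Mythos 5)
Your argument is correct and follows essentially the same route as the paper: both establish boundedness from the uniform pointwise bound of Lemma \ref{GSLVDanh} plus $L^p$-integrability of $t^{-\alpha(1-\theta)}$, and both obtain the Cauchy property by showing that differences of iterates contract with factor $\|L_1\|_{L^\infty(0,T)}\mathscr M_1<1$ in the weighted quantity $\sup_t t^{\alpha(1-\theta)}\|\cdot\|_{\mathbb H^\nu(\Omega)}$ (the paper phrases this as an induction giving $\|u^{(n+k)}(t)-u^{(n)}(t)\|_{\mathbb H^\nu(\Omega)}\le 2\mathcal N_1(\|L_1\|_{L^\infty(0,T)}\mathscr M_1)^{n-1}t^{-\alpha(1-\theta)}$, which is the same estimate), before transferring to $L^p$. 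The only cosmetic discrepancy is that for the forward integral you bound $\mathbf P_\alpha(t-r)$ by $(t-r)^{\alpha-1}$ rather than by $M_\alpha\lambda_1^{-\theta}(t-r)^{\alpha(1-\theta)-1}$ as the paper does, which yields a slightly different numerical constant than the $\mathscr M_1$ of the Appendix; using Lemma \ref{nayvesom}(a) there instead makes the constants match exactly.
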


\begin{proof}[Proof of Lemma \ref{GSLVDanh}] 
	Let us consider the case $k=1$. Firstly,    $   \norm{f}_{ \mathbb H^\nu ({\Omega})}$ is   bounded by  $C_1(\nu, \theta)     \|f \|_{   \mathbb H^{\nu+\theta}(\Omega) }  $ upon the embedding (\ref{Embed1}). Furthermore, 
	it is obvious to see from \textbf{(AP.4.)} in the Appendix that $\mathcal N_{1}\ge C_1(\nu, \theta)  t^{\alpha (1-{ \theta }  )} $  by noting the number $ \al(1-\theta)$ is contained in the interval $(0;1)$. These easily imply   the desired inequality \eqref{result} for $k=1$. 
	Assume that \eqref{result} holds for $k=n$. 
	This means that
	\begin{align} \label{inductive}
	\norm{w^{(n)}(t)}_{\mathbb H^\nu({\Omega})} \le \mathcal N_{1}   t^{-\alpha (1-{ \theta }  )}  \|f \|_{   \mathbb H^{\nu+\theta}(\Omega) } .
	\end{align} 
	We show that \eqref{result}  holds for $k=n+1$. 
Thanks to definition \eqref{operatorAT},   using the fact that $\{\varphi_j\}$ is an orthonormal basis of $L^2(\Omega)$, and then using  Lemma  \ref{nayvesom}, one  arrives at
	\begin{align}
	\Big\| {\bf B}_{\alpha }(t,  T) f\Big\|_{\mathbb H^\nu({\Omega})} 
	&\le M_\al m_\al^{-1}  T^{\alpha(1-\theta)} \big( T^{\al \theta }+\lambda_1^{-\theta} \big) t^{- \alpha (1-  \theta )}  \Bigg( \sum_{j=1}^\infty \lambda_j^{2 \nu }  \langle f,\varphi_j\rangle^2 \Bigg)^{1/2} \nn\\
	&=  M_\al m_\al^{-1}  T^{\alpha(1-\theta)} \Big( T^{\al \theta }+\lambda_1^{-\theta} \Big)  t^{-\alpha (1-{ \theta }  )}  \norm{f}_{ \mathbb H^\nu ({\Omega})}\nn\\
	&  \le  M_\al m_\al^{-1}  T^{\alpha(1-\theta)} \Big( T^{\al \theta }+\lambda_1^{-\theta} \Big)  t^{-\alpha (1-{ \theta }  )} C_1(\nu, \theta) \norm{f}_{ \mathbb H^{\nu+\theta} ({\Omega})} , \nn  
	\end{align}
	where we have used 
	the Sobolev embedding $ \mathbb H^{\nu+\theta} ({\Omega})  \hookrightarrow  \mathbb H^{\nu} ({\Omega})  $. Next, let us estimate the integrals by using   assumption   $(\mathscr H_1)$. The idea is to  try to bound them by the convergent improper integrals. Indeed, one can show that  
	\begin{align} \label{important2}
	& \Bigg\|  \int_0^t  {\bf P}_{\alpha }  (t-r) G(r,w^{(n)}(r)) dr \Bigg\|_{\mathbb H^\nu({\Omega})}  \nn\\
	& \hspace*{3cm} \le  \int_0^t  \Bigg\|{ \sum_{j=1}^\infty  (t-r)^{\alpha-1}  E_{\alpha,\alpha}(-\lambda_j(t-r)^\alpha) G_j(r,w^{(n)}(r)) \varphi_j } \Bigg\|_{\mathbb H^\nu({\Omega})} dr \nn\\
	& \hspace*{3cm} \le M_\al  \lambda_1^{-\theta} \int_0^t (t-r)^{\alpha(1-\theta)-1}    \norm{G(r,w^{(n)}(r)) }_{\mathbb H^\nu({\Omega})} dr  \nn\\
	& \hspace*{3cm} \le  \| L_1\|_{L^\infty (0,T)}  M_\al  \lambda_1^{-\theta}  \int_0^t (t-r)^{\alpha(1-\theta)-1}    \norm{w^{(n)}(r) }_{\mathbb H^\nu({\Omega})} dr . 
	\end{align} 
	On the other hand,  the inequality
	$
	\mathscr E_{\al, T} (-\lambda_j t^\al) (T-r)^{\alpha-1} E_{\alpha,\alpha}(-\lambda_j(T-r)^\alpha) $ is obviously bounded by $\mathcal M_1 (T-r)^{\al(1-\theta)-1}   t^{-\al(1-\theta)}  
	$  due to Lemma \ref{nayvesom}. Here, the constant $\mathcal M_1$ is given by \textbf{(AP.4)} in the Appendix. 
	We then obtain the following estimate
	\begin{align} \label{i4}
	&\Bigg\| \int_0^T {\bf B}_{\alpha }(t,  T)  {\bf P}_{\alpha }  (T-r) G(r,w^{(n)}(r)) dr  \Bigg\|_{\mathbb H^\nu({\Omega})}   \nn\\
	&\quad \quad \quad \quad \le \int_0^T  \Bigg\|{ \sum_{j=1}^\infty  (T-r)^{\alpha-1} \mathscr E_{\al, T} (-\lambda_j t^\al) E_{\alpha,\alpha}(-\lambda_j(T-r)^\alpha) G_j(r,w^{(n)}(r)) \varphi_j }\Bigg\|_{\mathbb H^\nu({\Omega})} dr\nn\\
	&\quad \quad \quad \quad \le  \| L_1\|_{L^\infty (0,T)}  \mathcal M_1 t^{-\al(1-\theta)} \int_0^T (T-r)^{\alpha(1-\theta)-1}    \norm{w^{(n)}(r) }_{\mathbb H^\nu({\Omega})} dr , 
	\end{align} 
	where we note that  the constant $\mathcal M_1$ is given by \textbf{(AP.4.)} in the Appendix.
	According to the above inequalities, we need to estimate the integral $\int_0^t (t-r)^{\alpha(1-\theta)-1}    \norm{w^{(n)}(r) }_{\mathbb H^\nu({\Omega})} dr$. To do this, we will apply the   inductive hypothesis \eqref{inductive}. Moreover, by also using the fact that $1\le T^{\alpha(1-\theta)}t^{-\alpha(1-\theta)}$ for all $0<t\le T$, and $\int_0^t(t-r)^{\alpha(1-\theta)-1}r^{-\alpha(1-\theta)}dr$ is equal to $\pi/\sin (\pi\alpha(1-\theta) )$, we consequently obtain the following estimates  
	\begin{align} \label{a1}
	\int_0^t (t-r)^{\alpha(1-\theta)-1}    \norm{w^{(n)}(r) }_{\mathbb H^\nu({\Omega})} dr &   \le      \mathcal N_{1} \|f \|_{   \mathbb H^{\nu+\theta}(\Omega) } \int_0^t (t-r)^{\alpha(1-\theta)-1}    r^{-\alpha (1-{ \theta}  )}  dr     \nn\\
	& \le   \mathcal N_{1}	 \|f \|_{   \mathbb H^{\nu+\theta}(\Omega) } \frac{\pi T^{\al(1-\theta) } }{\sin (\pi\alpha(1-\theta) )} t^{-\al(1-\theta)}.
	\end{align}
	Here, in the last equality, we use \textbf{(AP.1.)} in the Appendix. 
	By  similar arguments as above, we obtain 
	\begin{align} \label{ab1}
	\Bigg\|  \int_0^t  {\bf P}_{\alpha }  (t-r) G(r,w^{(n)}(r)) dr + \int_0^T   {\bf B}_{\alpha }(t,  T)   {\bf  P}_{\alpha }  (T-r) G(r,w^{(n)}(r)) dr \Bigg\|_{\mathbb H^\nu({\Omega})}  & \nn\\
	\le \| L_1\|_{L^\infty (0,T)} \mathscr M_1 \mathcal N_1 & . 
	\end{align}
	From some preceding estimates and by some simple computations, we can find that
	\begin{align*}
	\norm{w^{(n+1)}(t)}_{\mathbb H^\nu({\Omega})}  &\le \Big\|   { \bf B}_{\alpha }(t,  T) f\Big\|_{\mathbb H^\nu({\Omega})}+\Big\|  \int_0^t  { \bf P}_{\alpha }  (t-r) G(r,w^{(n)}(r)) dr \Big\|_{\mathbb H^\nu({\Omega})}\nn\\
	& \hspace*{3.42cm} +\Bigg\|  \int_0^T   {\bf  B}_{\alpha }(t,  T)   {\bf  P}_{\alpha }  (T-r) G(r,w^{(n)}(r)) dr \Bigg\|_{\mathbb H^\nu({\Omega})}\nn\\
	& \le \Big( \mathcal M_1 M_\al^{-1}   C_1(\nu, \theta)    +  \|L_1\|_{L^\infty(0,T)} \mathscr M_1 \mathcal N_1 \Big) t^{-\alpha (1-{ \theta }  )}  \|f \|_{   \mathbb H^{\nu+\theta}(\Omega) } \nn\\
	&= \mathcal N_1 t^{-\alpha (1-{ \theta }  )}  \|f \|_{   \mathbb H^{\nu+\theta}(\Omega) }.
	\end{align*}
	By the induction method, we deduce that \eqref{inductive} holds for any $k=n\in \mathbb{N}$, $n\ge 1$.
\end{proof}

\begin{proof}[Proof of Lemma \ref{GSLVDanhb}] Since $ p \in [1, \frac{1}{\alpha(1-\theta)})$, we know that the function $t\mapsto t^{-\alpha (1-\theta)}$ is $ L^p(0,T)$-integrable which implies that $\big\{u^{(n)}\big\}$ is a bounded sequence in $ L^p(0, T ;   \mathbb H^\nu({\Omega}))$. Hence, it is necessary to prove $\big\{u^{(n)}\big\}$ is a Cauchy sequence. By using the notation $\mathbf{u}^{(n,k)}:= u^{(n+k)} -u^{(n)}$ and using triangle inequality, one has the following estimate
	\begin{align*}   
	\Big\| \mathbf{u}^{(n+1,k)}(t)  \Big\|_{\mathbb H^\nu({\Omega})}    
	&  \le  \| L_1\|_{L^\infty (0,T)}   M_\al  \lambda_1^{-\theta}   \int_0^t (t-r)^{\alpha(1-\theta)-1}    \big\| \mathbf{u}^{(n,k)}(r)  \big\|_{\mathbb H^\nu({\Omega})}  dr \nn\\
	& +  \| L_1\|_{L^\infty (0,T)}  \mathcal M_1    t^{-\al(1-\theta)}    \int_0^T (T-r)^{\alpha(1-\theta)-1}    \big\| \mathbf{u}^{(n,k)}(r)  \big\|_{\mathbb H^\nu({\Omega})}  dr.  
	\end{align*}	
	Similarly to the proof of Lemma \ref{GSLVDanh}, one can use the inductive hypothesis to estimate the above right hand side.  Then by iterating the same computations in Lemma \ref{GSLVDanh}, we can bound   $\| \mathbf{u}^{(n+1,k)}(.,t) \|_{\mathbb H^\nu({\Omega})} $ by the quantity $2 \mathcal N_{1}  (  \| L_1\|_{L^\infty (0,T)}  	\mathscr M_1    )^{n} t^{-\al(1-\theta)}   $. Summarily, one can obtain the following conclusion by basing on the
	induction method  
	\begin{equation}  \label{qt10}
	\big\|\mathbf{u}^{(n,k)}(t)  \big\|_{\mathbb H^\nu({\Omega})} \le 2 \mathcal N_{1}     \Big(   \| L_1\|_{L^\infty (0,T)} 	\mathscr M_1    \Big)^{n-1}  t^{-\alpha (1-{ \theta }  )} ,
	\end{equation}
	which completed the proof by letting $n\to \infty$. 
\end{proof}

\begin{proof}[Proof of Theorem \ref{t1}] We firstly prove the existence of a mild solution $u$ in the space $L^p (0, T ;   \mathbb H^\nu({\Omega}))$, and then obtain its continuity in the following steps (1 and 2). After that, we will present the proofs the Parts a - d in the sequel. 
	
	\vspace*{0.2cm}
	
	\noindent \textbf{Step 1: Prove the existence of a mild solution $u$ in the space $L^p (0, T ;   \mathbb H^\nu({\Omega}))$:} 
	Since
	$ L^p (0, T ;   \mathbb H^\nu({\Omega})) $ is a Banach space and
	$\left\{u^{(n)}\right\}_{n=1,2,...}$ is a Cauchy sequence in $ L^p (0, T ;   \mathbb H^\nu({\Omega}))$ thanks to Lemmas (\ref{GSLVDanh}), (\ref{GSLVDanhb}), we deduce that there exists a function $  u \in
	L^p (0, T ;   \mathbb H^\nu({\Omega})) $ such that $\lim_{n \to +\infty } u^{(n)}=
	u$. 
	Now, we show that $u$ is a mild  solution of the problem  by showing  that $u= \overline {\bf  J} u$, where
	\begin{align} \label{Lmildsolution1}
	\overline {\bf  J} u(t):=    {\bf  B}_{\alpha }(t,  T) f &+ \int_0^t   {\bf  P}_{\alpha }  (t-r) G(r,u(r)) dr  - \int_0^{ T}   {\bf  B}_{\alpha }(t,  T)   { \bf P}_{\alpha }  (T-r) G(r,u(r)) dr. 
	\end{align}
	Since the sequence $\big\{ u^{(n)} \big\}$ converges to $u$ in $ L^p (0, T ;   \mathbb H^\nu({\Omega}))$-norm, there exists a sub-sequence $\big\{ u^{(n_m)}\big\}$  such that   point-wise converges to $u$, i.e.,  
	$ u^{(n_m)}(t) \longrightarrow u(t) $ in $\mathbb H^\nu({\Omega})$-norm for almost everywhere $t$ in $(0,T)$. Let us denote by $\mathbf v^{(n_m)}:= u^{(n_m)}   - u$.
	This fact and taking $k\to \infty$ in the estimate (\ref{qt10}) allow us to obtain 
	\begin{equation}   
	\big\| \mathbf v^{(n_m)}  \big\|_{\mathbb H^\nu({\Omega})} \le 2 \mathcal N_{1}   \Big(   \| L_1\|_{L^\infty (0,T)}  	\mathscr M_1  \Big)^{n_m-1}  t^{-\alpha (1-{ \theta }  )}    , 
	\label{PointwiseCon}
	\end{equation}
	for almost everywhere $t$ in $(0,T)$ also. 	Moreover, we note from Lemma 3.3  that this sub-sequence is also  bounded by the power function $t\mapsto  t^{-\alpha (1-{ \theta }  )} $.    
	These help us to apply dominated convergence theorem as follows. Indeed,  it is obvious that the quantities 
	\begin{align}
	\int_0^t (t-r)^{\alpha(1-\theta)-1} \big\| \mathbf  v^{(n_m)} \big\|_{\mathbb H^\nu({\Omega})} dr, \quad 
	t^{-\alpha(1-\theta)}  \int_0^T (T-r)^{\alpha(1-\theta)-1} \big\| \mathbf v^{(n_m)} \big\|_{\mathbb H^\nu({\Omega})} dr,   \nn
	\end{align}
	point-wise converge  to zero as (\ref{PointwiseCon}), and are bounded by    $ L^p(0,T)$-integrable functions. Thus, the same computations as (\ref{important2}), show 
	\begin{align} \label{00}
	&\Bigg\|  \int_0^t   {\bf  P}_{\alpha }  (t-r) \Big( G(r,u^{(n_m)}(r)) - G(r,u(r)) \Big) dr  \Bigg\|_{ L^p (0,T; \mathbb H^\nu({\Omega}) )}^p \nn\\
	&\hspace*{2cm} = \int_0^T \left\| \int_0^t   {\bf  P}_{\alpha }  (t-r) \Big( G(r,u^{(n_m)}(r)) - G(r,u(r)) \Big) dr  \right\|_{\mathbb H^\nu({\Omega})}^p dt \nn\\
	&\hspace*{2cm} \le       \int_0^T  \Bigg\{ \int_0^t (t-r)^{\alpha(1-\theta)-1} \big\| \mathbf v^{(n_m)}  \big\|_{\mathbb H^\nu({\Omega})} dr \Bigg\}^p  dt,
	\end{align}
	and by using a similar way as in \eqref{i4}, we have the following bound
	\begin{align} \label{01}
	&  \left\| \int_0^T   {\bf  B}_{\alpha }(t,  T)  {\bf P}_{\alpha }  (T-r) \Big( G(r,u^{(n_m)}(r)) - G(r,u(r)) \Big) dr   \right\|_{ L^p (0,T; \mathbb H^\nu({\Omega})  )}^p \nn\\
	&\hspace*{2cm} = \int_0^T \left\| \int_0^T    {\bf  B}_{\alpha }(t,  T)   {\bf  P}_{\alpha }  (T-r) \Big( G(r,u^{(n_m)}(r)) - G(r,u(r)) \Big) dr   \right\|_{\mathbb H^\nu({\Omega})}^p dt \nn\\
	&\hspace*{2cm}  \le    \int_0^T  \Bigg\{ t^{-\alpha(1-\theta)}  \int_0^T (T-r)^{\alpha(1-\theta)-1} \big\|  \mathbf v^{(n_m)} \big\|_{\mathbb H^\nu({\Omega})} dr \Bigg\}^p  dt.
	\end{align}
	The right hand-side of \eqref{00} and \eqref{01} tend  to zero when $n_m$ goes to positive infinity. The above arguments conclude that $u$ satisfies Equation (\ref{Lmildsolution1}), and so that is a mild solution of Problem (\ref{mainprob1})-(\ref{mainprob4}) in $ L^p (0, T ;   \mathbb H^\nu({\Omega}))$. 
	By taking the limit of the left handside of  \eqref{01}, we obtain  
	\begin{align} \label{e7}
	\|u\|_{ L^p (0, T ;   \mathbb H^\nu({\Omega}))}  \lesssim   \|f \|_{   \mathbb H^{\nu+\theta}(\Omega) }.
	\end{align}	
	
	\noindent \textbf{Step 2: Prove   $u\in  C^{\alpha(1-\theta)} \big((0,T];\mathbb H^{\nu} (\Omega)\big)$:} 	  We need to estimate   $u(\widetilde t )-u(t)$ in $\mathbb H^{\nu} $ norm, for all $0<t \le \widetilde t \le T$. By the formulation \eqref{Lmildsolution}, the triangle inequality yields that  
	\begin{align} \label{b1}
	\Big\|   u( \widetilde t)-u(t)  \Big\|_{\mathbb H^{\nu} (\Omega)}
	&\le \Big\| \Big(  {\bf  B}_{\alpha }(\widetilde t,  T)-  {\bf  B}_{\alpha }(t,  T)  \Big) f \Big\|_{\mathbb H^{\nu} (\Omega)} \nn\\
	& \hspace*{1.5cm} +\Bigg\| \int_0^t \Big(  {\bf  P}_{\alpha }  (\widetilde t-r) -  {\bf  P}_{\alpha }  (t-r) \Big) G(r,u(r)) dr \Bigg\|_{\mathbb H^{\nu} (\Omega)}\nn\\
	&+\Bigg\| \int_t^{\widetilde t} { \bf P}_{\alpha }  (\widetilde t-r)    G(r,u(r)) dr \Bigg\|_{\mathbb H^{\nu} (\Omega)}\nn\\
	&+\Bigg\| \int_0^T \Big(  { \bf B}_{\alpha }(\widetilde t,  T)-  { \bf B}_{\alpha }(t,  T)  \Big)   {\bf  P}_{\alpha }  (T-r)    G(r,u(r)) dr \Bigg\|_{\mathbb H^{\nu} (\Omega)}\nn\\
	&:=\|\mathfrak{M}_1\|_{\mathbb H^{\nu} (\Omega)}+\| \mathfrak{M}_2\|_{\mathbb H^{\nu} (\Omega)}+\|\mathfrak{M}_3  \|_{\mathbb H^{\nu} (\Omega)}+\| \mathfrak{M}_4 \|_{\mathbb H^{\nu} (\Omega)}.
	\end{align}
	In what follows, we will estimate the terms $\mathfrak{M}_j$ for $1\le j\le 4$. \\
	{\it \underline{Estimate of $\mathfrak{M}_1$}: } 
	Using the fact that  $\displaystyle \partial_t   E_{\alpha ,1}(-\lambda_j  t^\alpha )  = -\lambda_j  t^{\alpha -1}  E_{\alpha ,\alpha  }(-\lambda_j  t^\alpha )$, we find that
	$$
	\frac{E_{\alpha ,1}(-\lambda_j \widetilde t^\alpha )-E_{\alpha ,1}(-\lambda_j  t^\alpha )}{E_{\alpha ,1}(-\lambda_j T^\alpha )} 
	=    \int_{t}^{\widetilde t} -\lambda_j r^{\alpha -1}  \frac{ E_{\alpha ,\alpha  }(-\lambda_j r^\alpha )}{E_{\alpha ,1}(-\lambda_j  T^\alpha )} dr .
	$$
	The latter inequality leads to 
	\begin{align}
	\Big\|  \Big( {\bf  B}_{\alpha }(\widetilde t,  T)-   {\bf  B}_{\alpha }(t,  T)  \Big) f \Big\|_{ \mathbb H^{\nu} (\Omega)} 
	& \lesssim  \int_t^{\widetilde t} \Bigg\| \sum_{j \in {\mathbb N}} \lambda_j r^{\alpha -1}  \frac{ E_{\alpha ,\alpha  }(-\lambda_j r^\alpha )}{E_{\alpha ,1}(-\lambda_j  T^\alpha )}   f_j  \varphi_j \Bigg\|_{ \mathbb H^{\nu} (\Omega)} dr \nn\\
	&\lesssim   \left( \int_t^{\widetilde t}    r^{ \alpha(\theta-1) -1}  dr \right) \|f \|_{   \mathbb H^{\nu+\theta}(\Omega) }\nn\\
	& = \frac{\widetilde t^{\alpha(1-\theta)}-t^{\alpha(1-\theta)}}{\alpha(1-\theta)t^{\alpha(1-\theta)} \widetilde t^{\alpha(1-\theta)} } \|f \|_{   \mathbb H^{\nu+\theta}(\Omega) } ,\label{lem2aaaa} 
	\end{align}
	where we have used the fact that 
	\begin{align} \label{lem2aa}
	\lambda_j r^{\alpha -1}\frac{ E_{\alpha ,\alpha }(-\lambda_j r^\alpha ) }{E_{\alpha ,1}(-\lambda_j T^\alpha )}  ~ \lesssim ~ \lambda_j r^{\alpha -1} \frac{1+\lambda_j T^\alpha }{1+ (\lambda_j r^\alpha)^2 } 
	~\lesssim~   \lambda_j^{\theta } r^{\alpha(\theta-1) -1}.
	\end{align}
	By noting that  $0<\alpha(1-\theta)<1$, we now have $ \widetilde t ^{\alpha(1-\theta)}-t^{\alpha(1-\theta)} \le (\widetilde t-t)^{\alpha(1-\theta)}$ and furthermore $\alpha(1-\theta) t^{\alpha(1-\theta)}  \widetilde t ^{\alpha(1-\theta)} \ge \alpha(1-\theta)t^{2\alpha(1-\theta)}.$
	Consequently, thanks to the estimate \eqref{lem2aaaa},  we arrive at
	\begin{align} \label{b11}
	\|\mathfrak{M}_1\|_{\mathbb H^{\nu} (\Omega)}  \lesssim   t^{-2\alpha(1-\theta)} (\widetilde t-t)^{\alpha(1-\theta)}  \|f \|_{   \mathbb H^{\nu+\theta}(\Omega) }.
	\end{align} 
	{\it \underline{Estimate of $\mathfrak{M}_2$}: } 
	It follows from   differentiating $ \partial_\rho \Big(\rho^{\alpha -1}E_{\alpha ,\alpha  }(-\lambda_j \rho^\alpha )\Big) = \rho^{\alpha -2}E_{\alpha ,\alpha -1}(-\lambda_j \rho^\alpha ),$ for all $\rho>0$ (see  Lemma \ref{DeriML}), that 
	\begin{align}
	& \int_0^t \Bigg( (\widetilde t-r)^{\alpha-1} E_{\alpha,\alpha}(-\lambda_j(\widetilde t-r)^\alpha) - (t-r)^{\alpha-1} E_{\alpha,\alpha}(-\lambda_j(t-r)^\alpha)\Bigg) G_j(r,u(r)) dr \nn\\
	&\quad \quad \quad =\int_0^t \int_{t-r}^{\widetilde t-r} \rho^{\alpha  -2}E_{\alpha  ,\alpha  -1}(-\lambda_j \rho^\alpha  ) G_j(r,u(r)) d\rho  dr  .\label{G1thb}
	\end{align}
	This implies that
	\begin{align}
	\|\mathfrak{M}_2\|_{\mathbb H^{\nu} (\Omega)}
	&\lesssim    \int_0^t \int_{t-r}^{\widetilde t-r} \rho ^{\alpha  -2}  \norm{G(r,u(r))}_{ \mathbb H^{\nu} (\Omega)} d\rho  dr \nn\\
	& \lesssim   (\widetilde t-t)^{\alpha-1}   \int_0^t   \norm{u(r)}_{ \mathbb H^{\nu} (\Omega)} dr \lesssim  (\widetilde t-t)^{\alpha-1}  \|u\|_{ L^p (0, T ;   \mathbb H^\nu({\Omega}))} \nn\\
	&	\lesssim   (\widetilde t-t)^{\alpha-1} \|f \|_{   \mathbb H^{\nu+\theta}(\Omega) } , \label{b111}
	\end{align}
	where we note  that
	\begin{align}
	\int_{t-r}^{\widetilde t-r} \rho^{\alpha  -2}   d\rho=\frac{(\widetilde t-r)^{\alpha-1}-(t-r)^{\alpha-1}}{\alpha-1} \le \frac{(\widetilde t-t)^{\alpha-1}}{\alpha-1}, \nn\\
	\rho^{\alpha  -2}E_{\alpha  ,\alpha  -1}(-\lambda_j \rho^\alpha  )    \lesssim     \rho^{\alpha  -2} \frac{1}{1+ \lambda_j \rho^\alpha}   \lesssim    \rho^{\alpha  -2} .\nn 
	\end{align}
	{\it \underline{Estimate of $\mathfrak{M}_3$}: } For all $t<r<\widetilde t$,  using the inequality $ |E_{\alpha,\alpha}(-\lambda_j (\widetilde t-r)^\alpha)  |\le   {M}_\al $ and  the fact that $(\widetilde t-r)^{\alpha-1} \le (\widetilde t-t)^{\alpha-1}$ show   
	\begin{align}
	\Big\| \mathfrak{M}_3 \Big\|_{\mathbb H^{\nu} (\Omega)} 
	&\le   {M}_\al  (\widetilde t-t)^{\alpha-1}  \int_t^{\widetilde t}  \norm{ G(r,u(r)) }_{\mathbb H^\nu({\Omega})}  dr  \lesssim  (\widetilde t-t)^{\alpha-1} \|f \|_{   \mathbb H^{\nu+\theta}(\Omega) } .  \label{b1111}
	\end{align}
	\noindent
	{\it \underline{Estimate of $\mathfrak{M}_4$}: } 
	Using  \eqref{lem2aa}, we obtain
	\begin{align*}  
	&\Bigg\|\sum_{j=1}^\infty  (T-r)^{\alpha-1}  \lambda_j \rho^{\alpha -1}  \frac{ E_{\alpha ,\alpha  }(-\lambda_j \rho^\alpha )}{E_{\alpha ,1}(-\lambda_j  T^\alpha )}  E_{\alpha,\alpha}(-\lambda_j(T-r)^\alpha) G_j(r,u(r)) \varphi_j\Bigg\|_{\mathbb H^{\nu} (\Omega)} & \nn\\
	&  \hspace*{0.6cm}  
	\quad \quad \quad \quad \quad \lesssim  \mathcal  (T-r)^{\al(1-\theta)-1} \rho^{\al(1-\theta)-1} \norm{G(r,u(r))}_{\mathbb H^{\nu} (\Omega)}. &
	\end{align*}
	
	\vspace*{0.15cm}
	
	\noindent By using the fact that $\widetilde t^{\alpha(1-\theta)}-t^{\alpha(1-\theta)} \le (\widetilde t-t)^{\alpha(1-\theta)}$, and 
	$t^{\alpha(1-\theta)}  \widetilde t ^{\alpha(1-\theta)} \ge t^{2\alpha(1-\theta)},$ we derive that
	\begin{align} \label{aa1}
	\|\mathfrak{M}_4	\|_{\mathbb H^{\nu} (\Omega)}  
	&\lesssim  \int_0^T \int_{t}^{\widetilde t} (T-r)^{ \alpha(1-\theta)-1} \xi^{ \alpha(\theta-1)-1} \norm{G(r,u(r))}_{\mathbb H^{\nu} (\Omega)}   d\xi  dr    \nn \\
	&\lesssim  \frac{ \widetilde t ^{\alpha(1-\theta)}-t^{\alpha(1-\theta)} }{\alpha(1-\theta)t^{\alpha(1-\theta)} \widetilde t ^{\alpha(1-\theta)} } \int_0^T  (T-r)^{ \alpha(1-\theta)-1}   \norm{G(r,u(r))}_{\mathbb H^{\nu} (\Omega)}   dr    \nn\\
	&  \lesssim
	t^{-2\alpha(1-\theta)} (\widetilde t-t)^{\alpha(1-\theta)} \int_0^T  (T-r)^{ \alpha(1-\theta)-1}   \norm{G(r,u(r))}_{\mathbb H^{\nu} (\Omega)}   dr    .       
	\end{align} 
	It is easy to see that
	\begin{align}
	\int_0^T  (T-r)^{ \alpha(1-\theta)-1}   \norm{G(u(r))}_{\mathbb H^{\nu} (\Omega)} &  \lesssim  
	\int_0^T  (T-r)^{ \alpha(1-\theta)-1}   \norm{u(r)}_{\mathbb H^{\nu} (\Omega)}   dr\nn\\
	& \lesssim  \|f \|_{   \mathbb H^{\nu+\theta}(\Omega) }  \int_0^T  (T-r)^{ \alpha(1-\theta)-1}  r^{-\alpha (1-{ \theta }  )}  dr        \nn\\
	&\lesssim \|f \|_{   \mathbb H^{\nu+\theta}(\Omega) }. \nn
	\end{align}
	The latter estimate together with \eqref{aa1} lead  to
	\begin{equation} \label{b11111}
	\|	\mathfrak{M}_4 \|_{\mathbb H^{\nu} (\Omega)} \lesssim
	t^{-2\alpha(1-\theta)} (\widetilde t-t)^{\alpha(1-\theta)} \|f \|_{   \mathbb H^{\nu+\theta}(\Omega) }.
	\end{equation}
	
	\noindent \underline{\textit{Obtaining the estimate for $u( \widetilde t)-u(t)$:}} Combining \eqref{b1}, \eqref{b11}, \eqref{b111}, \eqref{b1111}, \eqref{b11111} lead us to
	$$
	\|   u(\widetilde t)-u(t)  \|_{\mathbb H^{\nu} (\Omega)}  \lesssim  \Big[ t^{-2\alpha(1-\theta)} (\widetilde t-t)^{\alpha(1-\theta)}+ (\widetilde t)^{\alpha-1}  \Big]     \|f \|_{   \mathbb H^{\nu+\theta}(\Omega) },
	$$
	which  implies that  $u\in  C^{\alpha(1-\theta)} \big((0,T];\mathbb H^{\nu} (\Omega)\big)$, and  the inequality (\ref{vanphong}) can be obtained by using (\ref{e7}) . \\
	
	\noindent	In what follows, we carry out the proofs of Part (a), (b), (c), (d).\\
	
	\noindent \underline{{\it Part (a)}.}  {\it  Prove $ u\in \displaystyle    L^p (0,    T ;  \mathbb H^{\nu+\theta-\theta'} (\Omega)) $, for any  $p \in \left[1, \frac{1}{\alpha(1-\theta')}\right)$.}\\
	
	\noindent    Lemma \ref{nayvesom} yields the estimate
	\begin{align}
	\Big\|   {\bf  B}_{\alpha }(t,  T) f\Big\|_{\mathbb H^{\nu+\theta-\theta'}({\Omega})}^2&= \sum_{j=1}^\infty \lambda_j^{2 \nu +2 \theta - 2\theta'} \Big|\mathscr E_{\al,T} (-\lambda_j t^\al) \Big|^2  \langle f,\varphi_j\rangle^2 \nn\\
	&\lesssim    t^{-2\alpha (1-{ \theta' }  )} \sum_{j=1}^\infty \lambda_j^{2 \nu +2 \theta - 2\theta'} \lambda_j^{2 \theta' }  \langle f,\varphi_j\rangle^2. \label{chieuroi} 
	\end{align}
	Therefore, $ {\bf  B}_{\alpha }(t,  T) f \in \displaystyle     L^p (0,    T ;  \mathbb H^{\nu+\theta-\theta'} (\Omega))$. Further, we infer from assumption $(\mathscr H_1)$  and the estimate  $(t-r)^{\alpha-1}  E_{\alpha,\alpha}(-\lambda_j(t-r)^\alpha) \lesssim \lambda_j^{-(\theta-\theta')} (t-r)^{\alpha(1-(\theta-\theta'))-1}$ as in  Lemma \ref{MLineqna}  
	\begin{align} \label{chieuroib}
	\Bigg\|  \int_0^t   { \bf P}_{\alpha }  (t-r) G(r,u(r)) dr \Bigg\|_{\mathbb H^{\nu+\theta-\theta'}({\Omega})}    &\lesssim    \int_0^t (t-r)^{\alpha(1-\theta+\theta')-1}    \norm{G(r,u(r)) }_{\mathbb H^\nu({\Omega})} dr  \nn\\
	&\lesssim   \int_0^t (t-r)^{\alpha(1-\theta+\theta')-1} r^{-\alpha(1-\theta)} dr \lesssim   t^{-\alpha(1-\theta')} , 
	\end{align} 
	where we used the definition of the Beta function as  \textbf{(AP.1.)} in the Appendix, and the fact that  $t^{\alpha \theta' } = t^{-\alpha(1-\theta')}t^{\alpha } \lesssim t^{-\alpha(1-\theta')}$. Now, we proceed to estimate the last term of $u$.
	Lemma 3.2 yields that 
	$$ \mathscr E_{\al, T} (-\lambda_j t^\al) (T-r)^{\alpha-1} E_{\alpha,\alpha}(-\lambda_j(T-r)^\alpha) \lesssim \lambda_j^{\theta'-\theta} t^{-\al(1-\theta')} (T-r)^{\al(1-\theta)-1}    .
	$$ 
	This  invokes from assumption $(\mathscr H_1)$ that 
	\begin{align} \label{chieuroic}
	&\Bigg\| \int_0^T   { \bf B}_{\alpha }(t,  T)  {\bf  P}_{\alpha }  (T-r) G(r,u(r)) dr  \Bigg\|_{\mathbb H^{\nu+\theta-\theta'}({\Omega})}   \nn\\
	&  \quad \quad \quad  \quad \quad \quad \quad\lesssim t^{-\al(1-\theta')} \int_0^T (T-r)^{\alpha(1-\theta)-1}    \norm{G(r,u(r)) }_{\mathbb H^\nu({\Omega})} dr \nn\\
	&  \quad \quad \quad \quad \quad \quad \quad\lesssim t^{-\al(1-\theta')} \int_0^T (T-r)^{\alpha(1-\theta)-1}    r^{-\alpha(1-\theta)} dr  \lesssim t^{-\al(1-\theta')}.  
	\end{align}  
	Taking the above estimates (\ref{chieuroi}), (\ref{chieuroib}), (\ref{chieuroic}) together, we imply that $ u\in \displaystyle     L^p (0,    T ;  \mathbb H^{\nu+\theta-\theta'} (\Omega)) $, for all $p \in \left[1, \frac{1}{\alpha(1-\theta')}\right)$, and complete this part. 
	
	\vspace*{0.2cm}

	\noindent \underline{{\it Part (b)}.} {\it Show  that  $ u \in  C\left([0,T];\mathbb H^{\nu-\nu'} (\Omega)\right) $.}\\
	
	\noindent 	Let $t$ and $\widetilde t$ be satisfied that $0\le t< \widetilde t\le T$.   	
	By using the equation \eqref{b1} and the embedding $ \mathbb H^{\nu} (\Omega)  \hookrightarrow \mathbb H^{\nu- \nu'} (\Omega) $, we obtain
	\begin{align} \label{c11}
	&\|   u(\widetilde t)-u(t)  \|_{\mathbb H^{\nu- \nu'} (\Omega) } \nn\\
	&\quad \quad \quad \le 
	\|\mathfrak{M}_1\|_{\mathbb H^{\nu- \nu'} (\Omega) }+\|\mathfrak{M}_2\|_{\mathbb H^{\nu- \nu'} (\Omega) }+\|\mathfrak{M}_3\|_{\mathbb H^{\nu- \nu'} (\Omega) }+\|\mathfrak{M}_4\|_{\mathbb H^{\nu- \nu'} (\Omega) }\nn\\
	&\quad \quad \quad\lesssim \|\mathfrak{M}_1\|_{\mathbb H^{\nu- \nu'} (\Omega) }+|\mathfrak{M}_4\|_{\mathbb H^{\nu- \nu'} (\Omega) }+ \|\mathfrak{M}_2\|_{\mathbb H^{\nu} (\Omega) }+\|\mathfrak{M}_3\|_{\mathbb H^{\nu} (\Omega) }.
	\end{align}
	For the right-hand side of \eqref{b1}, we thus need to  estimate the terms $ \|\mathfrak{M}_1\|_{\mathbb H^{\nu- \nu'} (\Omega) }$, $\|\mathfrak{M}_4\|_{\mathbb H^{\nu- \nu'} (\Omega) }$.  We now continue to consider the  following estimates.
	
	\vspace*{0.2cm}
	\noindent {\it Estimate $\|\mathfrak{M}_1\|_{\mathbb H^{\nu- \nu'} (\Omega) }$}: 
	It is easy to show that
	\begin{align} \label{c111}
	\|\mathfrak{M}_1\|_{\mathbb H^{\nu- \nu'} (\Omega) }  \lesssim     \frac{ (\widetilde t-t)^{\al(\theta+\nu'-1)} }{\al(\theta+\nu'-1)} \|f \|_{   \mathbb H^{\nu+\theta}(\Omega) }  . \nn
	\end{align}
	\noindent
	{\it Estimate $\|\mathfrak{M}_4\|_{\mathbb H^{\nu- \nu'} (\Omega)}$}: 
	By a similar argument as in \eqref{b11111}, we obtain
	\begin{align}
	\|\mathfrak{M}_4\|_{\mathbb H^{\nu- \nu'} (\Omega)}  
	&\lesssim    {  \int_0^T \int_{t}^{\widetilde t} (T-r)^{\al(1-\theta)-1} \rho^{\al(\theta+\nu'-1)-1} \norm{G(r,u(r))}_{\mathbb H^{\nu} (\Omega)} d\rho dr. }
	\end{align}
	Since $0< \al(\theta+\nu'-1)<2$, we split it into the two following cases:\\
	{\it Case 1}. If $0< \al(\theta+\nu'-1) \le 1$ then apply $(a+b)^\sigma \le a^\sigma +b ^\sigma ,~a, b \ge 0, 0<\sigma<1$, we have
	\begin{equation}
	(\widetilde t)^{\al(\theta+\nu'-1)}-t^{\al(\theta+\nu'-1)} \le (\widetilde t-t)^{\al(\theta+\nu'-1)}.
	\end{equation}
	From two latter observations, we find that
	\begin{align} \label{c11111}
	\|\mathfrak{M}_4\|_{\mathbb H^{\nu- \nu'} (\Omega)} 
	&\lesssim   \int_0^T \int_{t}^{\widetilde t} (T-r)^{\al(1-\theta)-1} \rho^{\al(\theta+\nu'-1)-1}  \norm{G(r,u(r))}_{\mathbb H^{\nu} (\Omega)}  d\rho  dr    \nn \\
	&\lesssim \frac{ (\widetilde t)^{\al(\theta+\nu'-1)} - t^{\al(\theta+\nu'-1)}}{\al(\theta+\nu'-1)}   \int_0^T  (T-r)^{ \alpha(1-\theta)-1}   \norm{G(r,u(r))}_{\mathbb H^{\nu} (\Omega)}   dr    \nn\\
	& \lesssim  
	h^{\alpha(\theta+\nu'-1)} \int_0^T  (T-r)^{ \alpha(1-\theta)-1}   \norm{G(r,u(r))}_{\mathbb H^{\nu} (\Omega)}    dr   \nn\\
	&\lesssim  h^{\alpha(\theta+\nu'-1)}   \|f \|_{   \mathbb H^{\nu+\theta}(\Omega) } .  
	\end{align}  
	{\it Case 2}.  If $1< \al(\theta+\nu'-1)  \le 2$ then since $0 \le t \le \widetilde t \le T$,  we have
	\begin{align}
	&(\widetilde t)^{\al(\theta+\nu'-1) }-t^{\al(\theta+\nu'-1) } \nn\\
	&\quad \quad \quad = \Big[(\widetilde t)^{\al(\theta+\nu'-1)}- t^{\al(\theta+\nu'-1) -1} (\widetilde t) \Big]+ \Big[t^{\al(\theta+\nu'-1) -1} \widetilde t - t^{\al(\theta+\nu'-1) } \Big]\nn\\
	& \quad \quad \quad= \widetilde t \Big[(\widetilde t)^{\al(\theta+\nu'-1) -1}- t^{\al(\theta+\nu'-1) -1}  \Big]+ (\widetilde t-t) t^{\al(\theta+\nu'-1) -1}  \nn\\
	&\quad \quad \quad\le \max \Big(T, T^{\al(\theta+\nu'-1) -1}\Big) \Big[ (\widetilde t-t)^{\al(\theta+\nu'-1) -1}+(\widetilde t-t)\Big].
	\end{align} 
	Therefore
	\begin{align} \label{c111111}
	\|\mathfrak{M}_4\|_{\mathbb H^{\nu- \nu'} (\Omega)} 
	\lesssim 
	\left\{
	\begin{array}{llllcccc}
	(\widetilde t-t)^{\al(\theta+\nu'-1)}\mathbf{1}_{0<\al(\theta+\nu'-1)\le 1} \vspace*{0.1cm} \\
	\left( (\widetilde t-t)^{\al(\theta+\nu'-1)-1}+\gamma 
	\right)\mathbf{1}_{1<\al(\theta+\nu'-1)< 2} \\
	\end{array} \right\}	
	\|f \|_{   \mathbb H^{\nu+\theta}(\Omega) } .  
	\end{align}  
	Collecting the results \eqref{c11}, \eqref{c111},\eqref{b111}, \eqref{b1111}, \eqref{c11111} and \eqref{c111111}, we deduce that $ u \in  C\left([0,T];\mathbb H^{\nu-\nu'} (\Omega)\right)$ and finish the desired inequality. \\
	
	\noindent \underline{{\it Part (c)}.} {\it Show that 
		\begin{align*}
		\Vert \partial_t 
		u(t)\Vert_{\mathbb H^{\nu-\nu_1-\frac{1}{\alpha}}(\Omega)}   \lesssim t^{- \alpha (1-{ \theta } -\nu_1  )} \|f\|_{\mathbb{H}^{\nu+\theta}(\Omega)} .
		\end{align*}	
	}
	
	\noindent	In order to establish the result, let us define the following projection operator, for  any $v= \sum_{j=1}^\infty  \langle v,\varphi_j\rangle\varphi_j$ and any $M>0$,  
	$$ {\mathcal P}_{M}    v:=\sum_{j=1}^M    \langle v,\varphi_j \rangle\varphi_j$$  and the 	 two following operator
	\begin{align}
	{ \mathscr  D}_{{\it 1}\hspace*{-0.05cm},\alpha}(t,T) v &:=\sum_{j=1}^\infty  \frac{-\lambda_j t^{\alpha -1} E_{\alpha ,\alpha }(-\lambda_j t^\alpha )}{E_{\alpha ,1}(-\lambda_j T^\alpha )} \langle v,\varphi_j\rangle\varphi_j, \label{D1}\\
	{\mathscr D}_{{\it 2}\hspace*{-0.05cm},\alpha}(t)v &:= \sum_{j=1}^\infty   t^{\alpha -2}E_{\alpha ,\alpha -1}(-\lambda_j t^\alpha )  \langle v,\varphi_j\rangle\varphi_j .   \label{D2}    
	\end{align} 
	Noting that  ${\mathcal P}_{M}  $ has   finite  rank, we have  the following equality after some simple computations
	\begin{align} \label{derivative1}
	\partial_t 
	{\mathcal P}_{M}    u(t)=  { \mathscr D}_{1,\alpha }(t,T) {\mathcal P}_{M}   f &+ \int_0^t  {\mathscr  D}_{2,\alpha }(t-r) {\mathcal P}_{M}    G(r,u(r)) dr \nn\\
	&- \int_0^T  {\mathscr D}_{1,\alpha }(t,T)   {\bf P}_{\alpha }  (T-r) {\mathcal P}_{M}    G(r,u(r)) dr.
	\end{align}
	One can infer from $0\le \nu_1\le 1-\theta$ that $\displaystyle 0\le\nu_1<\frac{2\alpha-1}{\alpha}-\theta$. Hence, this can be  associated with  $\displaystyle\frac{\alpha-1}{\alpha}<\theta<1$  that $\displaystyle 1<\theta+\nu_1+\frac{1}{\al}<2$, and this implies that
	\begin{align}
	t^{\al-1}	\left|\frac{E_{\alpha ,\alpha }(-\lambda_j t^\alpha )}{E_{\alpha ,1}(-\lambda_j T^\alpha )}\right|  &
	\lesssim  	t^{\al-1}\left(  \frac{1+\lambda_j T^\alpha  }{1+\lambda_j t^\alpha  }  \right)^{2-\theta-\nu_1-\frac{1}{\al}} \left(  \frac{1+\lambda_j T^\alpha  } {1+\lambda_j t^\alpha  }  \right)^{ \frac{1}{\al} +\nu_1 +\theta-1}  \nn\\
	&\lesssim  t^{-\al (1-\theta-\nu_1)} \lambda_j^{\frac{1}{\al} +\nu_1 +\theta-1}   . \label{troimua}
	\end{align}
	It is easy to see that
	\begin{align}
	\Bigg\|{\mathscr  D}_{{\it 1}\hspace*{-0.05cm},\alpha}(t,T) \Big( {\mathcal P}_{M'}   -{\mathcal P}_{M}  \Big) f\Bigg\|^2_{\mathbb H^{\nu-\nu_1-\frac{1}{\alpha}}(\Omega)}   	&=\sum_{j=M+1}^{M'}  \lambda_j^{2\nu-2\nu_1-\frac{2}{\alpha}}\Big| \frac{-\lambda_j t^{\alpha -1} E_{\alpha ,\alpha }(-\lambda_j t^\alpha )}{E_{\alpha ,1}(-\lambda_j T^\alpha )} \langle f,\varphi_j\rangle \Big|^2 \nn\\
	&	\lesssim  	  t^{-2\alpha (1-{ \theta } -\nu_1  )}  \sum_{j=M+1}^{M'}  \lambda_j^{ 2\nu+2\theta }   \left|\langle f,\varphi_j\rangle\right|^2. \label{moichuyennhaA}
	\end{align}
	On the other hand, it is certain that 
	\begin{align}  
	&	\Bigg\|  \int_0^t {\mathscr  D}_{2,\alpha }(t-r)   \Big( {\mathcal P}_{M'}   -{\mathcal P}_{M}  \Big)   G(r,u(r)) dr \Bigg\|_{\mathbb H^{\nu-\nu_1-\frac{1}{\alpha}}({\Omega})}  \nn\\
	&\quad \quad \quad \quad \quad \quad \quad \quad \lesssim    \int_0^t (t-r)^{\alpha -2} \left( \sum_{j=M+1}^{M'}   \lambda_j^{2\nu-2\nu_1-\frac{2}{\alpha}}  G_j^2(r,u(r))  \right)^{1/2} dr  \nn\\
	&\quad \quad \quad \quad \quad  \quad \quad \quad\lesssim    \int_0^t (t-r)^{\alpha -2} \left( \sum_{j=M+1}^{M'}   \lambda_j^{2\nu}  G_j^2(r,u(r))  \right)^{1/2} dr . \label{moichuyennhaB}
	\end{align} 
	Now, let us estimate the third term on the right hand side of \eqref{derivative1}.  We see that 
	\begin{align}  
	&\Bigg\| \int_0^T  {\mathscr  D}_{{\it 1}\hspace*{-0.05cm},\alpha}(t,T)   {\bf  P}_{\alpha }  (T-r) \Big( {\mathcal P}_{M'}   -{\mathcal P}_{M}  \Big)  G(r,u(r)) dr \Bigg\|_{\mathbb H^{\nu-\nu_1-\frac{1}{\alpha}}(\Omega)}   \nn\\
	&\lesssim \int_0^T (T-r)^{\alpha-1}  \Bigg( \sum_{j=M+1}^{M'}	\lambda_j^{2\nu-2\nu_1-\frac{2}{\alpha}} \left| t^{-\al (1-\theta-\nu_1)} \lambda_j^{\frac{1}{\al} +\nu_1 +\theta}  \lambda_j^{-\theta} (T-r)^{-\alpha \theta} G_j(r,v(r))   \right|^2 \Bigg)^{\frac{1}{2}} dr  \nn\\
	&\lesssim t^{- \alpha (1-{ \theta } -\nu_1  )} \int_0^T (T-r)^{\alpha(1-\theta)-1}  \Bigg( \sum_{j=M+1}^{M'}	\lambda_j^{2\nu}     G_j^2(r,v(r))     \Bigg)^{\frac{1}{2}} dr, \label{moichuyennhaC}
	\end{align} 
	where we have used  the estimates $ 
	\Big|E_{\alpha,\alpha}(-\lambda_j (T-r)^\alpha)\Big|  \lesssim  \lambda_j^{-\theta} (T-r)^{-\alpha \theta},$  and
	\begin{align} \label{i5}
	&	\Bigg|\lambda_j^{\nu-\nu_1-\frac{1}{\alpha}}\frac{-\lambda_j t^{\al-1} E_{\alpha,\al}(-\lambda_j t^\alpha)}{E_{\alpha,1}(-\lambda_j T^\alpha)}    E_{\alpha,\alpha}(-\lambda_j(T-\tau)^\alpha)  G_j(v(r))   \Bigg|  \nn\\
	& \hspace*{7cm} \lesssim   (T-r)^{-\alpha \theta} t^{-\alpha(1-\theta)}  \lambda_j^{\nu-\nu_1} \Big|  G_j(r,v(r)) \Big|.
	\end{align}	
	Applying the Lebesgue's dominated convergence theorem, we deduce  that three terms    $${\mathscr  D}_{{\it 1}\hspace*{-0.05cm},\alpha}(t,T) \mathcal P_M f, \quad \int_0^t  {\mathscr  D}_{2,\alpha }(t-r)  \mathcal P_M  G(r,u(r)) dr, \quad \int_0^T  {\mathscr  D}_{{\it 1}\hspace*{-0.05cm},\alpha}(t,T)   {\bf  P}_{\alpha }  (T-r) \mathcal P_M  G(r,u(r)) dr$$ 
	are the  Cauchy sequences in the  space $\mathbb H^{\nu-\nu_1-\frac{1}{\alpha}}(\Omega)$.
	Then, we obtain three convergences in the space $\mathbb H^{\nu-\nu_1-\frac{1}{\alpha}}(\Omega)$ as follows
	\begin{align}
	&	\lim_{M \to \infty} {\mathscr  D}_{{\it 1}\hspace*{-0.05cm},\alpha}(t,T) \mathcal P_M f= {\mathscr  D}_{{\it 1}\hspace*{-0.05cm},\alpha}(t,T)   f\nn\\
	&	\lim_{M \to \infty} \int_0^t  {\mathscr  D}_{2,\alpha }(t-r)  \mathcal P_M  G(r,u(r)) dr=\int_0^t  {\mathscr  D}_{2,\alpha }(t-r)    G(r,u(r)) dr \nn\\
	&	\lim_{M \to \infty} \int_0^T  {\mathscr  D}_{{\it 1}\hspace*{-0.05cm},\alpha}(t,T)   {\bf  P}_{\alpha }  (T-r) \mathcal P_M  G(r,u(r)) dr=\int_0^T  {\mathscr  D}_{{\it 1}\hspace*{-0.05cm},\alpha}(t,T)   {\bf  P}_{\alpha }  (T-r)  G(r,u(r)) dr .
	\end{align}
	The above equality  implies that  $\partial_t 
	\mathcal P_M  u(t)$    consequently converges	 to $\partial_t 
	u(t)$ in $\mathbb H^{\nu-\nu_1-\frac{1}{\alpha}}(\Omega)$. Further, the following estimates also hold  
	\begin{align}
	\Vert {\mathscr  D}_{{\it 1}\hspace*{-0.05cm},\alpha}(t,T)   f\Vert _{\mathbb H^{\nu-\nu_1-\frac{1}{\alpha}}(\Omega)}   	 &	\lesssim  	  t^{- \alpha (1-{ \theta } -\nu_1  )}  \|f\|_{\mathbb{H}^{\nu+\theta}(\Omega)}, \nn
	\end{align}
	and
	\begin{align}  
	& \Bigg\|  \int_0^t {\mathscr  D}_{2,\alpha }(t-r) G(r,u(r)) dr \Bigg\|_{\mathbb H^{\nu-\nu_1-\frac{1}{\alpha}}({\Omega})}  \nn\\
	&  \hspace*{3cm} \lesssim     \int_0^t (t-r)^{\alpha -2} \left\| G(r,u(r)) \right\|_{\mathbb{H}^{\nu}(\Omega)}  dr  \nn\\ 
	& \hspace*{3cm} \lesssim \|f\|_{\mathbb{H}^{\nu+\theta}} \int_0^t (t-r)^{\alpha -2}  r^{-\alpha(1-\theta)}  dr  \lesssim t^{- \alpha (1-{ \theta } -\nu_1  )} \|f\|_{\mathbb{H}^{\nu+\theta}} , \nn\\
	& \Bigg\| \int_0^T  {\mathscr  D}_{{\it 1}\hspace*{-0.05cm},\alpha}(t,T)  {\bf  P}_{\alpha }  (T-r)   G(r,u(r)) dr \Bigg\|_{\mathbb H^{\nu-\nu_1-\frac{1}{\alpha}}(\Omega)}   \nn\\
	& \hspace*{3cm} \lesssim t^{- \alpha (1-{ \theta } -\nu_1  )} \int_0^T (T-r)^{\alpha(1-\theta)-1}  \left\| G(r,u(r)) \right\|_{\mathbb{H}^{\nu}(\Omega)} dr  \nn\\
	& \hspace*{3cm} \lesssim t^{- \alpha (1-{ \theta } -\nu_1  )} \|f\|_{\mathbb{H}^{\nu+\theta}(\Omega)} \int_0^T (T-r)^{\alpha(1-\theta)-1} r^{-\alpha(1-\theta)} dr  \nn \\
	& \hspace*{3cm} \lesssim t^{- \alpha (1-{ \theta } -\nu_1  )} \|f\|_{\mathbb{H}^{\nu+\theta}(\Omega)} . \nn 
	\end{align} 	
	Here, we note that   $\int_0^t (t-r)^{\alpha -2}  r^{-\alpha(1-\theta)}  dr \lesssim t^{\alpha \theta - 1}$ and  $t^{\alpha \theta - 1} = t^{- \alpha (1-{ \theta } -\nu_1  )} t^{\alpha\left(1-\nu_1-\frac{1}{\alpha}\right)} \lesssim t^{- \alpha (1-{ \theta } -\nu_1  )} $  in the second estimate  by using \textbf{(AP.1.)} in the Appendix and noting that  $1-\nu_1-\dfrac{1}{\alpha} \ge 0$ as $\nu_1 \le \dfrac{\alpha-1}{\alpha}$. 
	Consolidating all the above arguments, we obtain  that 
	\begin{align}
	\Big\Vert \partial_t 
	u(t)\Big\Vert_{\mathbb H^{\nu-\nu_1-\frac{1}{\alpha}}(\Omega)}  & \le \Big\| { \mathscr D}_{1,\alpha }(t,T) f \Big\|_{\mathbb H^\mu(\Omega)} + \Bigg\| \int_0^T  {\mathscr  D}_{{\it 1}\hspace*{-0.05cm},\alpha}(t,T)  \mathscr { P}_{\alpha }  (T-r) G(r,u(r)) dr \Bigg\|_{\mathbb H^{\nu-\nu_1-\frac{1}{\alpha}}(\Omega)}  \nn\\
	& +\Bigg\| \int_0^t  {\mathscr  D}_{2,\alpha }(t-r) G(r,u(r)) dr \Bigg\|_{\mathbb H^{\nu-\nu_1-\frac{1}{\alpha}}(\Omega)} \lesssim t^{- \alpha (1-{ \theta } -\nu_1  )} \|f\|_{\mathbb{H}^{\nu+\theta}(\Omega)} . \nn
	\end{align}
	Since $\nu_1\le 1-\theta$ and $\displaystyle 1<\theta+\nu_1+\frac{1}{\al}$, these straightforwardly imply that $0<\alpha(1-\theta-\nu_1)<1$ and $\partial_t 
	u \in  L^p (0,    T ;  \mathbb H^{\nu-\nu_1-\frac{1}{\alpha}}(\Omega))$, for all $p \in \left[1; \frac{1}{\alpha(1-\theta-\nu_1)}\right)$.  This completes Part c. \\
	
	\noindent \underline{{\it Part (d)}.} {\it Show that 
		\begin{align}
		\Vert \partial_t^\alpha 
		u(t)\Vert_{\mathbb H^{\nu-\nu_\alpha-\frac{1}{\alpha}}(\Omega)}   
		\lesssim t^{- \alpha \min \left\{ (1-{ \theta } -\nu_\alpha  );  (1-\theta) \right\} } \|f\|_{\mathbb{H}^{\nu+\theta}} . \nn
		\end{align}
	}
	To study the fractional derivative of order $\alpha$ of the mild solution $u$, let us consider the following operators given by
	\begin{align*}
	{ \mathscr D}_{{\it 3}\hspace*{-0.05cm},\alpha}(t,T) w &:= \sum_{j=1}^\infty \frac{-\lambda_j  E_{\alpha ,\alpha }(-\lambda_j t^\alpha )}{E_{\alpha ,1}(-\lambda_j T^\alpha )} \langle w,\varphi_j\rangle\varphi_j,  \nn\\
	{\mathscr D}_{{\it 4}\hspace*{-0.05cm},\alpha}(t)w &:= - \sum_{j=1}^\infty \lambda_j  t^{\alpha -1}E_{\alpha ,\alpha }(-\lambda_j t^\alpha )  \langle w,\varphi_j\rangle\varphi_j .     
	\end{align*} 
	By applying the projection $ \Big( {\mathcal P}_{M'}   -{\mathcal P}_{M}  \Big) $ to the solution $u$, and then calculating the fractional differentiation $\partial^\alpha_t$  	
	\begin{align} \label{derivative2}
	\partial_t ^{\,\alpha} {\mathcal P}_{M}   u(t)&= { \mathscr D}_{3,\alpha }(t,T) {\mathcal P}_{M}  f + \int_0^t  {\mathscr  D}_{4,\alpha }(t-r) {\mathcal P}_{M}   G(r,u(r)) dr \nn\\
	&- \int_0^T  {\mathscr D}_{3,\alpha }(t,T)  \mathscr { P}_{\alpha }  (T-r) {\mathcal P}_{M}  G(r,u(r)) dr+ {\mathcal P}_{M}   G(t,u(t)).
	\end{align}
	By using the fact that  $\displaystyle \frac{1}{\alpha} -\theta <  \frac{2\alpha - 1}{\alpha}-\theta$, it follows from the assumption $\displaystyle \displaystyle \frac{\alpha- 1}{\alpha} -\theta <  \nu_\alpha \le  \frac{1}{\alpha} -\theta$ that   $\displaystyle \frac{\alpha-1}{\alpha}-\theta < \nu_\alpha < \frac{2\alpha - 1}{\alpha}-\theta$. Thus, we find that $\displaystyle 1<\theta + \nu_\alpha + \frac{1}{\alpha}<2$. Therewith, the same techniques as (\ref{moichuyennhaA}) invokes that $  { \mathscr D}_{{\it 3}\hspace*{-0.05cm},\alpha}(t,T) f$ exists in the space $\mathbb H^{\nu-\nu_\alpha-\frac{1}{\alpha}}(\Omega)$ if  $f\in \mathbb H^{\nu+\theta}(\Omega)$, and  
	\begin{align}
	\Vert { \mathscr D}_{{\it 3}\hspace*{-0.05cm},\alpha}(t,T) f\Vert_{\mathbb H^{\nu-\nu_\alpha-\frac{1}{\alpha}}(\Omega)}   \lesssim   t^{-\al\left( \frac{1}{\alpha}-\nu_\alpha-\theta \right)} \|f\|_{\mathbb H^{\nu+\theta}(\Omega)}. \label{dachuyennhaA}
	\end{align}	
	The proof for  integrals $\int_0^t  {\mathscr  D}_{4,\alpha }(t-r)   G(r,u(r)) dr$, and  ~$\int_0^T  {\mathscr D}_{3,\alpha }(t,T)   { \bf P}_{\alpha }  (T-r)   G(r,u(r)) dr$ 	
	in the space $\mathbb H^{\nu-\nu_\alpha-\frac{1}{\alpha}}(\Omega)$ can be done by using the same argument of (\ref{moichuyennhaB}) and (\ref{moichuyennhaC}) by using assumption $(\mathscr H_1)$ and the argument of Cauchy sequences. Aside from the above existence results, we can also verify the following estimates
	\begin{align}  
	\Bigg\|  \int_0^t {\mathscr  D}_{4,\alpha }(t-r)    G(r,u(r)) dr \Bigg\|_{\mathbb H^{\nu-\nu_\alpha-\frac{1}{\alpha}}({\Omega})}    
	&\lesssim    \int_0^t (t-r)^{\alpha -2}  \left\| G(r,u(r)) \right\|_{\mathbb{H}^{\nu}(\Omega)} dr \nn\\
	&\lesssim t^{-\al\left( \frac{1}{\alpha}-\nu_\alpha-\theta \right)} t^{\alpha\left(1-\nu_\alpha-\frac{1}{\alpha}\right)} \|f\|_{\mathbb{H}^{\nu+\theta}(\Omega)} \nn\\
	&\lesssim t^{-\al\left( \frac{1}{\alpha}-\nu_\alpha-\theta \right)} \|f\|_{\mathbb{H}^{\nu+\theta}(\Omega)},\label{dachuyennhaB}
	\end{align} 
	where $1-\nu_\alpha-\dfrac{1}{\alpha} \ge 0$ as $\nu_\alpha \le \dfrac{\alpha - 1}{\alpha}$, and by a similar argument, we obtain
	\begin{equation}
	\Bigg\| \int_0^T  {\mathscr  D}_{{\it 3}\hspace*{-0.05cm},\alpha}(t,T)  {\bf  P}_{\alpha }  (T-r)   G(r,u(r)) dr \Bigg\|_{\mathbb H^{\nu-\nu_\alpha-\frac{1}{\alpha}}(\Omega)}   
	\lesssim t^{- \alpha (1-{ \theta } -\nu_\alpha  )} \|f\|_{\mathbb{H}^{\nu+\theta}(\Omega)} . \label{dachuyennhaC}
	\end{equation}
	Henceforth, we find that
	$
	\Big\Vert \partial_t^\alpha 
	u(t)\Big\Vert_{\mathbb H^{\nu-\nu_\alpha-\frac{1}{\alpha}}(\Omega)}   
	\lesssim t^{- \alpha (1-{ \theta } -\nu_\alpha  )} \|f\|_{\mathbb{H}^{\nu+\theta}(\Omega)} . \nn
	$
	Since $\nu_\alpha\le \dfrac{\alpha-1}{\alpha}$, we have $\alpha( 1-\theta-\nu_\alpha) \ge \alpha\left( 1-\theta - \frac{\alpha-1}{\alpha} \right) = \frac{1}{\alpha}-\theta >\frac{\alpha-1}{\alpha}-\theta >0$. In addition, it can be deduced from $\displaystyle \frac{\alpha-1}{\alpha}-\theta < \nu_\alpha$ that $\displaystyle \alpha(1-\theta-\nu_\alpha) < 1$. Hence, we straightforwardly infer that $0<\alpha(1-\theta-\nu_\alpha)<1$. Moreover, it results from $\nu_\alpha>\dfrac{\alpha-1}{\alpha}-\theta$ that $\nu-\nu_\alpha-\dfrac{1}{\alpha} < \nu - (1-\theta)<\nu$, and the Sobolev  embedding $ \mathbb{H}^{\nu }(\Omega ) \hookrightarrow \mathbb{H}^{\nu-\nu_\alpha-\frac{1}{\alpha}}(\Omega) $ holds. This implies that
	$$\left\| G(t,u(t)) \right\|_{\mathbb{H}^{\nu-\nu_\alpha-\frac{1}{\alpha}}(\Omega)} \lesssim \left\| G(t,u(t)) \right\|_{\mathbb{H}^{\nu}(\Omega)} \lesssim  t^{- \alpha (1-{ \theta }  )}  \|f\|_{\mathbb{H}^{\nu+\theta}(\Omega)}.$$
	Combining the above inequalities finally shows that 
	\begin{align}
	\Vert \partial_t^\alpha 
	u(t)\Vert_{\mathbb H^{\nu-\nu_\alpha-\frac{1}{\alpha}}(\Omega)}   
	\lesssim t^{- \alpha \min \left\{ (1-{ \theta } -\nu_\alpha  );  (1-\theta) \right\} } \|f\|_{\mathbb{H}^{\nu+\theta}(\Omega)} , \nn
	\end{align}   
	and $ \partial_t^{\alpha}  u \in   L^p (0,    T ;  \mathbb H^{\nu-\nu_\alpha-\frac{1}{\alpha}}(\Omega))$, for all $p \in \left[1; \frac{1}{\alpha\min \left\{ (1-{ \theta } -\nu_\alpha  );  (1-\theta) \right\}}\right)$.  The proof is accomplished. 
\end{proof}

\subsection{Proofs of  Theorem \ref{theo2}}
\label{prooft2}
The proof of Theorem \ref{theo2} relies on a contraction mapping principle. 
In order to prove this,  we  prove the  following Lemma
\begin{lemma} \label{BodeSala} Let us pick  $\dfrac{\alpha-1}{\alpha}<\theta<1$,  $0\le \nu\le \sigma \le \nu +1 $ and $1\le q < \dfrac{1}{\alpha(1-\theta)}$. Assume that $f \in  \mathbb H^{\nu+\theta+1}(\Omega) $ and $G$ sastisfies $(\mathscr H_2)$ with $ \| L_2 \|_{L^\infty(0,T)}\in \left (0;   \mathscr M_2^{-1} \right )$.    
	Set 	
	\begin{align} 
	\mathscr{T} v(t) :=  	{\bf B}_{\alpha }(t,  T) f &+ \int_0^t  {\bf P}_{\alpha }  (t-r) G(r,v(r)) dr - \int_0^{ T}  {\bf B}_{\alpha }(t,  T) {\bf P}_{\alpha }  (T-r) G(r,v(r)) dr.  \label{FTheo}
	\end{align}
	Then, for any $v\in   C\left([0,T]; \mathbb H^\nu(\Omega)\right) \cap  L^q(0,T; \mathbb H^{\sigma}(\Omega)) $, it holds that
	$$\mathscr{T} v \in   C\left([0,T]; \mathbb H^\nu(\Omega)\right) \cap  L^q(0,T; \mathbb H^{\sigma}(\Omega)).$$ 
\end{lemma}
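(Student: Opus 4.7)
The strategy is to write $\mathscr{T}v(t)=\mathscr{I}_1(t)+\mathscr{I}_2(t)-\mathscr{I}_3(t)$, where $\mathscr{I}_1(t)={\bf B}_\alpha(t,T)f$, $\mathscr{I}_2(t)=\int_0^t{\bf P}_\alpha(t-r)G(r,v(r))\,dr$, and $\mathscr{I}_3(t)=\int_0^T{\bf B}_\alpha(t,T){\bf P}_\alpha(T-r)G(r,v(r))\,dr$, and to show each term lies in both $C([0,T];\mathbb H^\nu(\Omega))$ and $L^q(0,T;\mathbb H^\sigma(\Omega))$. A key feature of $(\mathscr H_2)$ compared with $(\mathscr H_1)$ is that $G$ maps into $\mathbb H^{\nu+1}(\Omega)$ (a gain of one derivative), so $\|G(r,v(r))\|_{\mathbb H^{\nu+1}(\Omega)}\le\|L_2\|_{L^\infty(0,T)}\|v\|_{C([0,T];\mathbb H^\nu)\cap L^q(0,T;\mathbb H^\sigma)}$. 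Since $\sigma\le\nu+1$, this controls $\|G(r,v(r))\|_{\mathbb H^\sigma(\Omega)}$ and $\|G(r,v(r))\|_{\mathbb H^\nu(\Omega)}$ uniformly in $r\in[0,T]$.

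For the $L^q(0,T;\mathbb H^\sigma(\Omega))$ part, apply Lemma \ref{nayvesom}(b) with exponent $\theta$ to get $\|\mathscr{I}_1(t)\|_{\mathbb H^\sigma(\Omega)}\lesssim t^{-\alpha(1-\theta)}\|f\|_{\mathbb H^{\sigma+\theta}(\Omega)}$; since $\sigma+\theta\le\nu+1+\theta$, the hypothesis $f\in\mathbb H^{\nu+\theta+1}(\Omega)$ suffices, and $L^q$-integrability of $t\mapsto t^{-\alpha(1-\theta)}$ follows from $q<1/(\alpha(1-\theta))$. For $\mathscr{I}_2$, the bound $\|\mathscr{I}_2(t)\|_{\mathbb H^\sigma(\Omega)}\lesssim\int_0^t(t-r)^{\alpha-1}\|G(r,v(r))\|_{\mathbb H^\sigma(\Omega)}\,dr$ is uniform in $t$. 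For $\mathscr{I}_3$, combine Lemma \ref{nayvesom}(b) for the $\mathscr E_{\alpha,T}$ factor with exponent $\theta$ and Lemma \ref{nayvesom}(a) for the $\mathbf P_\alpha$ factor with the matching exponent $\theta$, which cancels the $\lambda_j$ powers and yields $\|\mathscr{I}_3(t)\|_{\mathbb H^\sigma(\Omega)}\lesssim t^{-\alpha(1-\theta)}\int_0^T(T-r)^{\alpha(1-\theta)-1}\|G(r,v(r))\|_{\mathbb H^\sigma(\Omega)}\,dr$, again $L^q$-integrable.

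For the $C([0,T];\mathbb H^\nu(\Omega))$ part, the delicate point is continuity at $t=0$, where the quotient $E_{\alpha,1}(-\lambda_j t^\alpha)/E_{\alpha,1}(-\lambda_j T^\alpha)$ no longer decays in $j$. Use Lemma \ref{TTT} to bound $|E_{\alpha,1}(-\lambda_jT^\alpha)|^{-1}\le m_\alpha^{-1}(1+\lambda_jT^\alpha)$ and $|E_{\alpha,1}(-\lambda_jt^\alpha)|\le M_\alpha$, obtaining
\begin{equation*}
\|\mathscr{I}_1(t)\|_{\mathbb H^\nu(\Omega)}^2\lesssim\sum_{j=1}^\infty\lambda_j^{2\nu}(1+\lambda_jT^\alpha)^2|\langle f,\varphi_j\rangle|^2\lesssim\|f\|_{\mathbb H^{\nu+1}(\Omega)}^2,
\end{equation*}
uniformly in $t\in[0,T]$; the Lebesgue dominated convergence theorem in the spectral sum then gives continuity at every $t$, including $t=0$. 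For $\mathscr{I}_2$ and $\mathscr{I}_3$ the continuity in $t$ follows by the same differentiation trick used in the proof of Theorem \ref{t1}: split $\mathscr{I}_j(\tilde t)-\mathscr{I}_j(t)$ according to Lemma \ref{DeriML} and estimate using $\|G(r,v(r))\|_{\mathbb H^\nu(\Omega)}$, which is bounded uniformly in $r$ thanks to $(\mathscr H_2)$; this yields Hölder-type bounds in $(\tilde t-t)$ that vanish as $\tilde t\to t$.

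The main obstacle will be the boundary behaviour at $t=0$ for $\mathscr{I}_1$ and $\mathscr{I}_3$: one must trade the large factor $(1+\lambda_jT^\alpha)$ coming from the inverse Mittag-Leffler denominator against the one extra derivative of regularity available in $f$ (via $\mathbb H^{\nu+1}$) and in $G(r,v)$ (via $(\mathscr H_2)$), respectively. Once this trade is set up correctly, all three pieces admit spectrally uniform dominants in $j$, so Lebesgue dominated convergence both within the spectral sum and inside the $r$-integral delivers both the continuity and the $L^q$-integrability simultaneously.
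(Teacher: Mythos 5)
Your proposal is correct and follows essentially the same route as the paper's proof: the same three-term decomposition, the $L^q(0,T;\mathbb H^\sigma(\Omega))$ bounds via Lemma \ref{nayvesom} (with the $\lambda_j^{\theta}/\lambda_j^{-\theta}$ cancellation between ${\bf B}_\alpha(t,T)$ and ${\bf P}_\alpha(T-r)$ for the third term), and continuity of the integral terms via the differentiation formulas of Lemma \ref{DeriML}, trading the $(1+\lambda_j T^\alpha)$ factor from the inverse Mittag--Leffler denominator against the extra derivative carried by $f\in\mathbb H^{\nu+\theta+1}(\Omega)$ and by the $\mathbb H^{\nu+1}(\Omega)$-valued nonlinearity in $(\mathscr H_2)$. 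The only deviations are minor: you establish continuity of ${\bf B}_\alpha(\cdot,T)f$ by a uniform spectral dominant plus dominated convergence instead of the paper's explicit $(\widetilde t-t)^{\alpha\theta}$ modulus (both work), and in your treatment of $\mathscr I_3$ the phrase ``estimate using $\|G(r,v(r))\|_{\mathbb H^\nu(\Omega)}$'' should read $\|G(r,v(r))\|_{\mathbb H^{\nu+1}(\Omega)}$ --- as your closing paragraph in fact acknowledges --- since one full power of $\lambda_j$ produced by $\lambda_j\rho^{\alpha-1}E_{\alpha,\alpha}(-\lambda_j\rho^\alpha)/E_{\alpha,1}(-\lambda_j T^\alpha)$ must be absorbed by the extra derivative of $G$ to keep the $\rho$-integral convergent near $\rho=0$.
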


\begin{proof}[Proof of Lemma \ref{BodeSala}] We split this proof into the following steps.
	
	\vspace*{0.2cm}	
	
	\noindent \underline{\textit{Prove $\mathscr{T}  v \in  C\left([0,T]; \mathbb H^\nu(\Omega)\right)$}:}	Namely, we need to estimate the norm $\|\mathscr{T} v(\widetilde t)-\mathscr{T} v(t)  \|_{\mathbb H^\nu({\Omega})}$ for all $0 \le t  < \widetilde t \le T$. For more convenience, we will use notation $\mathfrak M_j$, $1\le j\le 4$ as (\ref{b1}) again.  However, the estimates for $\mathfrak M_j$ in Step 2 of the proof of Theorem (\ref{t1}) will be modified suitably to fit the assumptions of $f$ and $G$ is this theorem. Indeed, a slight modification of the techniques in the estimates (\ref{lem2aaaa}) and (\ref{lem2aa}) invokes that    
	\begin{align}
	\|\mathfrak{M}_1\|_{\mathbb H^{\nu} (\Omega)}
	&\le  \int_{t }^{\widetilde t}    \Bigg\|{\sum_{j=1}^\infty   \lambda_j   r^{\alpha -1}  \frac{ E_{\alpha ,\alpha  }(-\lambda_j r^\alpha )}{E_{\alpha ,1}(-\lambda_j  T^\alpha )}   f_j  \varphi_j }\Bigg\|_{ \mathbb H^\nu({\Omega})} d\tau \nn\\
	& \lesssim   \left( \int_{t}^{\widetilde t}    r^{ \alpha \theta -1}  dr \right) \|f \|_{   \mathbb H^{\nu+\theta+1}(\Omega) } \nn\\
	& 
	\lesssim \left\{
	\begin{array}{llllcccc}
	(\widetilde t-t )^{\al\theta}\mathbf{1}_{0<\al\theta\le 1} \vspace*{0.1cm} \\
	\left( (\widetilde t-t )^{\al\theta-1}+\gamma 
	\right)\mathbf{1}_{1<\al\theta< 2} \\
	\end{array} \right\} \big\|f \big\|_{   \mathbb H^{\nu+\theta+1}(\Omega) }, \nn 
	\end{align}
	where we note that $\dfrac{1-\theta}{2}$   belongs to $(0,1)$  ,  and it notes that $0<\alpha-1<\alpha\theta<\alpha<2$ as $\dfrac{\alpha-1}{\alpha}<\theta<1$. 
Next, estimates for the terms $\mathfrak M_j$, $2\le j\le 4$   will be based on the assumption ($\mathscr H_2$) of the nonlinearity $G$. 
	We see that 
	\begin{align}
	\|\mathfrak{M}_2\|_{\mathbb H^{\nu} (\Omega)}
	&\lesssim 
	\int_0^{t } \int_{t -r}^{\widetilde t-r} \rho^{\alpha  -2}  \norm{G(r,u(r))}_{\mathbb  H^\nu({\Omega})} d\rho  dr \nn\\
	&\lesssim  
	\int_0^{t } \int_{t -r}^{\widetilde t-r} \rho^{\alpha  -2}  \norm{G(r,v(r))}_{  \mathbb H^{\nu +1} (\Omega)} d\rho  dr  \nn\\
	&	\lesssim  
	\big\|v\big\|_{  C\left([0,T]; \mathbb H^\nu(\Omega)\right) \cap \mathscr L^q(0,T; \mathbb H^{\sigma}(\Omega))} \int_0^{t } \int_{t -r}^{\widetilde t-r} \rho^{\alpha  -2}    d\rho  dr  ,  \nn
	\end{align}
	where the norm $\norm{G(r,u(r))}_{\mathbb  H^\nu({\Omega})}$ is certainly $\lesssim$-bounded by $\norm{G(r,v(r))}_{  \mathbb H^{\nu +1} (\Omega)}$ due to the embedding $\mathbb H^{\nu+1 }(\Omega)\hookrightarrow \mathbb H^\nu (\Omega)$. Observe from the above estimate that the last right hand side clearly tends to zero as $\widetilde t$ tends to $t$. Hence, the preceding estimate implies the continuity of the term $\mathfrak M_2$ on $\mathbb{H}^\nu(\Omega)$. In addition, the continuity of the term $\mathfrak M_3$ is obvious by using  similar arguments as in \eqref{b1111} and the assumption ($\mathscr H_2$). Precisely, 
	\begin{equation}
	\|\mathfrak{M}_3\|_{\mathbb H^{\nu} (\Omega)} \lesssim  
	\int_{t }^{\widetilde t}  \norm{ G(r,v(r)) }_{\mathbb H^{\nu+1}(\Omega)}  dr \lesssim 
	(\widetilde t-t )  	\big\|v\big\|_{ C\left([0,T]; \mathbb H^\nu(\Omega)\right) \cap  L^q(0,T; \mathbb H^{\sigma}(\Omega))} .  \nn
	\end{equation}
Finally, we consider the  term $\|\mathfrak{M}_4\|_{\mathbb H^{\nu} (\Omega)}$. The idea is to combine  similar arguments as in Step 5 and the modification in the above estimates for $\mathfrak M_1$. Here, the maximum of the spatial smoothness of $G$ should be estimated in the space $\mathbb{H}^{\nu+1}$. Indeed, the following chain of the estimates can be checked 
	\begin{align}
	& \|\mathfrak{M}_8\|_{\mathbb H^{\nu} (\Omega)} \nn\\
	& \le \int_0^T \int_{t_1}^{t_2}\Bigg( \sum_{j=1}^\infty \Big | \lambda_j^\nu (T-\tau)^{\alpha-1}  \lambda_j \rho^{\alpha -1}  \mathscr E_{\al, T} (-\lambda_j \rho^\al)  E_{\alpha,\alpha}(-\lambda_j(T-r)^\alpha) G_j(r,u(r))  \Big|^2    \Bigg)^{\frac{1}{2}} d\rho dr\nn\\
	& \lesssim 
	\int_0^T \int_{t_1}^{t_2} (T-r)^{\al(1-\theta)-1} \rho^{\al \theta-1}  \norm{ G(r,v(r)) }_{\mathbb H^{\nu+1}(\Omega)}  d \rho dr \nn\\
	& \lesssim \big\|v\big\|_{  C\left([0,T]; \mathbb H^\nu(\Omega)\right) \cap  L^q(0,T; \mathbb H^{\sigma}(\Omega))} 
	\Big[(t_2)^{\alpha \theta} -(t_1)^{\alpha \theta}\Big]    \nn\\
	&	\lesssim  \big\|v\big\|_{  C\left([0,T]; \mathbb H^\nu(\Omega)\right) \cap  L^q(0,T; \mathbb H^{\sigma}(\Omega))}
	\left\{
	\begin{array}{llllcccc}
	(t_2-t_1)^{\al\theta}\mathbf{1}_{0<\al\theta\le 1} \vspace*{0.1cm} \\
	\left( 	(t_2-t_1)^{\al\theta-1}+	(t_2-t_1) 
	\right)\mathbf{1}_{1<\al\theta< 2} 
	\end{array} \right\} 		  . \nn
	\end{align}
	The preceding estimates lead to $\mathscr{T} v \in  C\left([0,T]; \mathbb H^\nu(\Omega)\right)$.
	
	\vspace*{0.2cm}	
	
	\noindent \underline{\textit{Prove $\mathscr{T} v \in  L^q(0,T; \mathbb H^{\sigma}(\Omega))$}:}		
	We observe that 
	\begin{align}
	\Big\| {\bf B}_{\alpha }(t,  T) f\Big\|_{\mathbb H^\sigma({\Omega})}^2&= \sum_{j=1}^\infty \lambda_j^{2 \sigma} \left|  \frac{E_{\alpha,1}(-\lambda_j t^\alpha)}{E_{\alpha,1}(-\lambda_j T^\alpha)} \right|^2  \langle f,\varphi_j\rangle^2 \lesssim    t^{-2\alpha (1-{ \theta }  )} \sum_{j=1}^\infty \lambda_j^{2\theta+2\sigma}  \langle f,\varphi_j\rangle^2  \nn\\
	& \lesssim t^{-2\alpha (1-{ \theta }  )} \sum_{j=1}^\infty \lambda_j^{2\theta+2\nu+2}  \langle f,\varphi_j\rangle^2 = t^{-2\alpha (1-{ \theta }  )} \|f\|_{\mathbb{H}^{\nu+\theta+1}}^2   .\label{ef1}
	\end{align}
	Thus, ${\bf B}_{\alpha }(t,  T) f\in  L^q(0,T; \mathbb H^{\sigma}(\Omega))$ since $t^{-\alpha (1-{ \theta }  )} \in L^q(0,T; \mathbb{R})$.  
		 
	Next, we estimate the second term of $\mathcal Jv$ where we will bound the operator norm of $\mathbf P_\alpha(t-r)$ on $\mathbb{H}^\sigma(\Omega)$ by $M_\alpha(t-r)^{\alpha-1}$, and then we estimate $\|G(r,v(r))\|_{\mathbb{H}^\sigma(\Omega)}$ by $\|G(r,v(r))\|_{\mathbb{H}^{\nu+1}(\Omega)}$ upon the assumption  $(\mathscr H_2)$ and the embedding $\mathbb{H}^{\nu+1}(\Omega)  \hookrightarrow \mathbb{H}^\sigma(\Omega)$ as $0\le \sigma\le \nu+1$, see  (\ref{Embed2}). Precisely, these arguments can be performed  as follows   
	\begin{align} \label{e2}
	&	\Bigg\|  \int_0^t  {\bf P}_{\alpha }  (t-r) G(r,v(r)) dr \Bigg\|_{ L^q(0,T; \mathbb H^{\sigma}(\Omega))}^q\nn\\
	& \hspace*{2cm} =  
	\int_0^T  	\left\|  \int_0^t  {\bf P}_{\alpha }  (t-r) G(r,v(r)) dr \right\|_{\mathbb H^{\sigma}(\Omega)}^q  dt\nn\\  
	& \hspace*{2cm} \le \big( C_2(\nu,\sigma){M}_\al \big)^q  \int_0^T  \left( \int_0^t (t-r)^{\alpha-1}    \norm{G(r,v(r)) }_{\mathbb H^{\nu+1}(\Omega)} dr \right)^q dt \nn\\
	& \hspace*{2cm} \le \Big(   \| L_2\|_{L^\infty (0,T)}   \overline{\mathcal M} _1 \Big)^q  	\big\|v\big\|^q_{  C\left([0,T]; \mathbb H^\nu(\Omega)\right) \cap  L^q(0,T; \mathbb H^{\sigma}(\Omega))},
	\end{align} 
where the constant $\overline{\mathcal M} _1$ is given by \textbf{(AP.4)} in the Appendix.
	
Next, we will estimate the $L^q(0,T; \mathbb H^{\sigma}(\Omega))$-norm of the last term of $\mathcal Jv$. The idea is   to combine estimates for the operators ${\bf B}_{\alpha }(t,  T)$ and $\mathbf P_\alpha(T-r)$. We can estimate that the operator ${\bf B}_{\alpha }(t,  T)$  maps  $\mathbb{H}^{\sigma+\theta}$ into $\mathbb{H}^\sigma(\Omega)$, and the operator $\mathbf P_\alpha(T-r)$ maps $\mathbb{H}^\sigma(\Omega)$ into $\mathbb{H}^{\sigma+\theta}(\Omega)$. Therefore, by using   assumption ($\mathscr H_2$),   this term can be estimated on the space $\mathbb{H}^{\sigma}$. In the technical aspect, we also note that the assumption $1-1/(\alpha q)<\theta<1$ guarantees that $(\alpha-1)/\alpha <\theta<1$, and so $\alpha(1-\theta)\in (0;1)$. Moreover, the power function $t^{-\alpha(1-\theta)q}$ is clearly integrable on $(0,T)$.   Indeed, one can show the following chain of   estimates  
	\begin{align}  \label{e3}
	&\Bigg\| \int_0^{ T}  {\bf B}_{\alpha }(t,  T)  {\bf P}_{\alpha }  (T-r)  G(r,v(r)) dr \Bigg\|_{ L^q(0,T; \mathbb H^{\sigma}(\Omega))}^q\nn\\	
	&\hspace*{2cm}=\int_0^T  \left\| \int_0^{ T}  {\bf B}_{\alpha }(t,  T)  {\bf P}_{\alpha }  (T-r) G(r,v(r)) dr \right\|_{\mathbb H^{\sigma}(\Omega)}^q  dt \nn\\
	&\hspace*{2cm} \le \mathcal M_{\alpha,T,\theta}^q  \int_0^T   t^{-\alpha(1-\theta)q}\left( \int_0^T  \norm{{\bf P}_{\alpha }  (T-r) G(r,v(r)) }_{\mathbb H^{\sigma+\theta}(\Omega)} dr \right)^q dt   \nn\\
	&\hspace*{2cm} \le \overline{\mathcal M}_{\alpha,T,\theta}^q  \int_0^T   t^{-\alpha(1-\theta)q}\left( \int_0^T (T-\tau)^{\alpha(1-\theta)-1}    \norm{G(r,v(r)) }_{\mathbb H^{\sigma }(\Omega)} dr \right)^q dt   \nn\\
	&\hspace*{2cm}\le \Big( \|L_2\|_{L^\infty(0,T)} \overline{\mathcal M} _2 \Big)^q	\big\|v\big\|^q_{  C\left([0,T]; \mathbb H^\nu(\Omega)\right) \cap  L^q(0,T; \mathbb H^{\sigma}(\Omega))},
	\end{align} 
where $\mathcal M_{\alpha,T,\theta}:=T^{\alpha(1-\theta)}( T^{\alpha\theta}+\lambda_1^{-\theta}) $, $\overline{\mathcal M}_{\alpha,T,\theta} =\mathcal M_{\alpha,T,\theta}M_\alpha$, and the constant $\overline{\mathcal M} _2$ is given by \textbf{(AP.4)} in the Appendix. 
	A collection of the derived estimates \eqref{ef1}, \eqref{e2}, \eqref{e3}, reveals  $\mathscr{T} v  \in L^q(0,T; \mathbb H^{\sigma}(\Omega)) $. Finally, we wrap up the proof. 
\end{proof}

\begin{proof}[Proof of Theorem \ref{theo2}]   In order 
	to show that Problem (\ref{mainprob1})-(\ref{mainprob4}) has a unique mild solution, we will prove the operator $\mathscr{Q}$ has a unique fixed point in $   C\left([0,T]; \mathbb H^\nu(\Omega)\right) \cap  L^q(0,T; \mathbb H^{\sigma}(\Omega)) $. The proof is  based on the Banach contraction principle.
	We have
	\begin{align}  
	\,& \Big\| \mathscr{T} v_1- \mathscr{T} v_2\Big \|_{  C\left([0,T]; \mathbb H^\nu(\Omega)\right) \cap  L^q(0,T; \mathbb H^{\sigma}(\Omega))} \nn\\
	=\,& \|\mathscr{T} v_1- \mathscr{T} v_2 \|_{ L^q(0,T; \mathbb H^{\sigma}(\Omega))}+\|\mathscr{T} v_1- \mathscr{T} v_2 \|_{ C([0,T];\mathbb H^\nu({\Omega}))} \nn\\
	:=\,&  \mathfrak M_5+ \mathfrak M_{6}. \label{f1}
	\end{align}
	For estimating $\mathfrak M_5$,  we apply the previous results in estimating \eqref{e2} and \eqref{e3} to obtain   
	\begin{align} \label{f2}
	\mathfrak M_5 & \le  \left\|  \int_0^t   {\bf  P}_{\alpha }  (t-r) \left(  G(r,v_1(r)) - G(r,v_2(r))  \right)dr \right\|_{ L^q(0,T; \mathbb H^{\sigma}(\Omega))} \nn\\
	& \hspace*{1.5cm} +\left\|  \int_0^T  {\bf  B}_{\alpha }(t,  T) {\bf  P}_{\alpha }   (T-r) \left(  G(r,v_1(r)) - G(r,v_2(r))  \right)dr \right\|_{  L^q(0,T; \mathbb H^{\sigma}(\Omega))} \nn\\
	&\le   \| L_2\|_{L^\infty (0,T)} \big(\overline{\mathcal M}_1 + \overline{\mathcal M}_2\big) \big\|v_1-v_2\big\|_{  C\left([0,T]; \mathbb H^\nu(\Omega)\right) \cap  L^q(0,T; \mathbb H^{\sigma}(\Omega)) }.
	\end{align}
On the other hand, to bound the term  $\mathfrak M_{6}$, we   estimate the operator norm of ${\bf P}_{\alpha }  (t-r)$ acting on $\mathbb{H}^{\nu}(\Omega)$ by $M_\alpha (t-r)^{\alpha-1}$, and of ${\bf B}_{\alpha }(t,  T)  {\bf  P}_{\alpha }  (T-r)$ acting from $\mathbb{H}^\nu(\Omega)$ to $\mathbb{H}^{\nu+1}(\Omega)$ by $M_\alpha^2 m_\alpha^{-1} (T^\alpha+\lambda_1^{-1})$. By applying the embedding $\mathbb{H}^\nu(\Omega)\hookrightarrow \mathbb{H}^{\nu+1}(\Omega)$ and   assumption ($\mathscr H_2$) we can deduce the following estimates 
	\begin{align} 
	\mathfrak M_{6}& \le  \left\|  \int_0^t  {\bf P}_{\alpha }  (t-r) \left(  G(r,v_1(r)) - G(r,v_2(r))  \right)dr \right\|_{\mathscr C([0,T];\mathbb H^\nu({\Omega}))} \nn\\
	& \hspace*{2cm} +\left\|  \int_0^T  {\bf B}_{\alpha }(t,  T)  {\bf  P}_{\alpha }  (T-r) \left(  G(v_1(r)) - G(v_2(r))  \right)dr \right\|_{  C([0,T];\mathbb H^\nu({\Omega}))} \nn\\
	&\le {M}_\al \sup_{0\le t\le T}\Bigg(  \int_0^t (t-r)^{\alpha-1}    \norm{G(r,v_1(r)) - G(r,v_2(r))}_{H^{\nu}(\Omega)} dr \Bigg)\nn\\
	&+ \frac{M_\alpha^2}{m_\alpha}(T^\alpha+\lambda_1^{-1}) \int_0^T (T-r)^{\alpha-1}    \norm{G(r,v_1(r))- G(r,v_2(r)) }_{\mathbb H^{\nu+1}(\Omega)} dr  \nn\\
	&\le \| L_2\|_{L^\infty (0,T)} \overline{\mathcal M}_3 \big\|v_1-v_2\big\|_{  C\left([0,T]; \mathbb H^\nu(\Omega)\right) \cap  L^q(0,T; \mathbb H^{\sigma}(\Omega)) } . \label{f3}
	\end{align}
	 
	A collection of the   estimates \eqref{f1}, \eqref{f2}, \eqref{f3} implies that
	$$\Big\|\mathcal J v_1- \mathcal J v_2 \Big\|_{{ C\left([0,T]; \mathbb H^\nu(\Omega)\right) \cap  L^q(0,T; \mathbb H^{\sigma}(\Omega)) }} \le    \| L_2\|_{L^\infty (0,T)} \mathscr M_2 \Big\|v_1-v_2\Big\|_{ C\left([0,T]; \mathbb H^\nu(\Omega)\right) \cap  L^q(0,T; \mathbb H^{\sigma}(\Omega)) }.$$
	Since $     \| L_2\|_{L^\infty (0,T)}   \mathscr M_2 <1$, we conclude that $\mathcal J $ is a contraction in $ C\left([0,T]; \mathbb H^\nu(\Omega)\right) \cap  L^q(0,T; \mathbb H^{\sigma}(\Omega))$ which 
	ensures the existence and uniqueness of a fixed point. 
	The desired inequality is easy to obtain.  Hence, we finalize the proof.
\end{proof}

\subsection{Proof of Theorems \ref{locally1}} 
\label{prooft3}
To start with, let us prove the following lemmas.

\begin{lemma} \label{locally} Assume that all assumptions of Theorem \ref{locally1} are fulfilled.   
\begin{itemize}
\item[a)] For $t>0$, and $\mathcal N_2 $   given by \textbf{(AP.4.)} in the Appendix, we have
\begin{align}
	\left\|{\bf B}_\alpha(t,T)f \right\|_{\mathbb{H}^{\nu}(\Omega)} \le \mathcal N_2  t^{-\alpha\vartheta } \|f\|_{\mathbb{H}^{\nu+( 1-\vartheta) }(\Omega)} , \label{hhhh1A}
	\end{align} 
	  Moreover,  the following convergence holds
	\begin{align}
	{	\bf B}_\alpha(\widetilde t,T) f \xrightarrow{\widetilde t \to t} {	\bf B}_\alpha(t,T)   f  \quad \textrm{in } \quad  \mathbb{H}^{\nu}(\Omega). \label{hhhh1B}
	\end{align}
\item[b)] For $t>0$, $w\in 	\mathfrak X_{\al,\vartheta, \nu,T }$, and $\mathscr N_2$   given by \textbf{(AP.4)} in the Appendix, it follows	
\begin{align}
	\left\| \int_0^t {	\bf P}_{\alpha }  (t-r) G(r,w(r)) dr \right\|_{\mathbb{H}^{\nu}(\Omega)}  \le  \mathscr N_2 K_0 \big( T^{s\alpha \vartheta } + \mathcal R^s \big) t^{-\alpha\vartheta} \|w\|_{  C^{\alpha\vartheta}((0,T];\mathbb{H}^{\nu}(\Omega))} .  \label{hhhh2A}
	\end{align}   
	 Moreover,  the following convergence holds
	\begin{align}
	\int_0^{\widetilde t} {	\bf P}_{\alpha }  (\widetilde t-r) G(r,w(r)) dr \xrightarrow{\widetilde  t \to t}  \int_0^{ t}  { \bf P}_{\alpha }  (t-r) G(r,w(r)) dr \quad \textrm{in} \quad  \mathbb{H}^{\nu}(\Omega).  \label{hhhh2B}
	\end{align}
\item[c)]  For $t>0$, $w\in 	\mathfrak X_{\al,\vartheta, \nu,T }$, it holds
\begin{align}
	\left\|\int_0^t  { \bf  P}_{\alpha }  (t-r) G(r,w(r)) dr\right\|_{\mathbb{H}^{\nu+1-\vartheta}(\Omega)} \le  \mathscr N_2 K_0 \big( T^{s\alpha \vartheta } + \mathcal R^s \big) t^{-\alpha \vartheta } \|w\|_{  C^{\alpha \vartheta }((0,T];\mathbb{H}^{\nu}(\Omega))} . \label{hhhh22A}
	\end{align}
\end{itemize} 
\end{lemma}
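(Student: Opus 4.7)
The plan is to treat the three parts essentially in parallel, each reducing to spectral bounds on the Mittag--Leffler functions (Lemma \ref{nayvesom}), the critical Lipschitz hypothesis $(\mathscr H_3)$, and a Beta-function integral. The non-routine input is just the careful choice of the exponent $\theta$ in Lemma \ref{nayvesom}(a)--(b) in each estimate, and the bookkeeping that turns the nonlinearity bound into the factorised form $K_0(T^{s\alpha\vartheta}+\mathcal R^s)$.

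For Part (a), I would simply expand ${\bf B}_\alpha(t,T)f$ in the eigenbasis and apply Parseval: $\|{\bf B}_\alpha(t,T)f\|_{\mathbb H^\nu}^2=\sum_j\lambda_j^{2\nu}|\mathscr E_{\alpha,T}(-\lambda_j t^\alpha)|^2\langle f,\varphi_j\rangle^2$. Lemma \ref{nayvesom}(b) with the choice $\theta=1-\vartheta\in(0,1)$ gives $|\mathscr E_{\alpha,T}(-\lambda_j t^\alpha)|\lesssim \lambda_j^{1-\vartheta}t^{-\alpha\vartheta}$, so the remaining weights combine into $\lambda_j^{2(\nu+1-\vartheta)}\langle f,\varphi_j\rangle^2$, yielding \eqref{hhhh1A} with $\mathcal N_2$ the product of the Mittag--Leffler constants. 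For \eqref{hhhh1B}, each term of the series is continuous in $t$ (continuity of $E_{\alpha,1}$), and the partial sums are dominated (uniformly in $\widetilde t$ in a neighbourhood of $t$) by the square-summable majorant $\lambda_j^{\nu+1-\vartheta}|\langle f,\varphi_j\rangle|$; dominated convergence in $\ell^2$ closes the argument.

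For Part (b), the first move is to bound the source pointwise in $r$. Using $(\mathscr H_3)$ with $G(r,0)=0$ and the inclusion $w\in \mathfrak X_{\alpha,\vartheta,\nu,T}(\mathcal R)$, together with $\|w(r)\|_{\mathbb H^\nu}\le r^{-\alpha\vartheta}\|w\|_{C^{\alpha\vartheta}}$ and the crude bound $1\le(T/r)^{s\alpha\vartheta}$ (since $r\le T$ and $s\alpha\vartheta>0$), one arrives at
\[
\|G(r,w(r))\|_{\mathbb H^{\sigma}}\le K_0\bigl(T^{s\alpha\vartheta}+\mathcal R^s\bigr)\,r^{-\alpha(\zeta+(1+s)\vartheta)}\|w\|_{C^{\alpha\vartheta}((0,T];\mathbb H^\nu(\Omega))}.
\]
Next, viewing ${\bf P}_\alpha(t-r)$ as an operator $\mathbb H^{\sigma}(\Omega)\to\mathbb H^{\nu}(\Omega)$, Lemma \ref{nayvesom}(a) with $\theta=\mu=\nu-\sigma\in(0,1)$ yields the kernel estimate $(t-r)^{\alpha-1}|E_{\alpha,\alpha}(-\lambda_j(t-r)^\alpha)|\lesssim \lambda_j^{-\mu}(t-r)^{\alpha(1-\mu)-1}$, so that $\|{\bf P}_\alpha(t-r)v\|_{\mathbb H^{\nu}}\lesssim (t-r)^{\alpha(1-\mu)-1}\|v\|_{\mathbb H^{\sigma}}$. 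Plugging in and invoking the Beta identity (AP.1) gives $t^{\alpha(1-\mu-\zeta-(1+s)\vartheta)}$; the hypothesis $\zeta<\min\{\alpha^{-1}-(1+s)\vartheta,\,\vartheta(1-s)-\mu\}$ guarantees both the convergence of the Beta integral ($\alpha(1-\mu)>0$, $1-\alpha(\zeta+(1+s)\vartheta)>0$) and that the residual exponent $\alpha(1-\mu-\zeta-s\vartheta)$ is nonnegative, so this factor is absorbed into a constant times $t^{-\alpha\vartheta}$, producing \eqref{hhhh2A}. For \eqref{hhhh2B} I would split
\[
\int_0^{\widetilde t}{\bf P}_\alpha(\widetilde t-r)G(r,w(r))\,dr-\int_0^{t}{\bf P}_\alpha(t-r)G(r,w(r))\,dr
\]
into $\int_t^{\widetilde t}{\bf P}_\alpha(\widetilde t-r)G(\cdot)\,dr$ and $\int_0^t[{\bf P}_\alpha(\widetilde t-r)-{\bf P}_\alpha(t-r)]G(\cdot)\,dr$. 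The first piece is $O((\widetilde t-t)^{\alpha(1-\mu)})$ by the same Beta bound on the shrinking interval; the second vanishes as $\widetilde t\to t$ by eigenbasis-wise continuity of $\rho\mapsto \rho^{\alpha-1}E_{\alpha,\alpha}(-\lambda_j\rho^\alpha)$ together with the integrable envelope $(t-r)^{\alpha(1-\mu)-1}\|G(r,w(r))\|_{\mathbb H^\sigma}$ already produced above, applying Lebesgue's dominated convergence theorem.

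Part (c) is the same computation as \eqref{hhhh2A} with only the Sobolev target changed from $\mathbb H^{\nu}$ to $\mathbb H^{\nu+1-\vartheta}$; the relevant smoothing index is now $\nu+1-\vartheta-\sigma=\mu+1-\vartheta$. Since $\mu<\vartheta$ by assumption, $\mu+1-\vartheta\in(0,1)$ lies in the admissible range of Lemma \ref{nayvesom}(a), giving $\|{\bf P}_\alpha(t-r)v\|_{\mathbb H^{\nu+1-\vartheta}}\lesssim (t-r)^{\alpha(\vartheta-\mu)-1}\|v\|_{\mathbb H^\sigma}$. Repeating the Beta-function manipulation now yields $t^{\alpha(\vartheta-\mu-\zeta-(1+s)\vartheta)}=t^{-\alpha\vartheta}\cdot t^{\alpha(\vartheta-\mu-\zeta-s\vartheta)}$; the condition $\zeta<\vartheta(1-s)-\mu$ again makes the surplus exponent nonnegative, so the time-dependence collapses to $t^{-\alpha\vartheta}$ up to a power of $T$, and \eqref{hhhh22A} follows. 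The main practical obstacle across all three parts is purely algebraic: keeping track of the chain $(\vartheta,\mu,\zeta,s,\alpha)$ so that every exponent inside the Beta integrals stays positive and every residual power of $t$ is bounded on $(0,T]$; this is where the two-sided condition on $\zeta$ in the statement of Theorem \ref{locally1} is used in an essential way.
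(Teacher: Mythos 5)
Your proposal is correct and follows essentially the same route as the paper: Parseval plus Lemma \ref{nayvesom} with the exponent choices $\theta=1-\vartheta$ (for ${\bf B}_\alpha$), $\theta=\mu$ (for ${\bf P}_\alpha$ into $\mathbb H^{\nu}$) and $\theta=\mu+1-\vartheta$ (for ${\bf P}_\alpha$ into $\mathbb H^{\nu+1-\vartheta}$), the pointwise bound on $G$ from $(\mathscr H_3)$ with $r^{-\alpha\vartheta}\le T^{s\alpha\vartheta}r^{-(1+s)\alpha\vartheta}$ and $L_3(r)\le K_0 r^{-\alpha\zeta}$, the Beta identity of \textbf{(AP.1)}, and the two-sided restriction on $\zeta$ to absorb the surplus powers of $t$. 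The only divergence is in the continuity claims \eqref{hhhh1B} and \eqref{hhhh2B}, where the paper rewrites the differences as integrals of the Mittag--Leffler derivatives from Lemma \ref{DeriML} and estimates them directly, while you use termwise continuity plus dominated convergence with the same integrable envelopes; both arguments are valid and quantitatively equivalent.
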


\begin{proof}
	
	{\underline {\it Proof of Part (a)}. }
	
	\noindent 	By applying the first part of Lemma \ref{nayvesom}, we obtain
	\begin{align*}
	\Big \|{\bf B}_\alpha(t,T)f \Big\|_{\mathbb{H}^{\nu}(\Omega)}^2 = \sum_{j=1}^\infty \left| \frac{E_{\alpha,1}(-\lambda_jt^\alpha)}{E_{\alpha,1}(-\lambda_jT^\alpha)} \right|^2 f_j^2 \le \mathcal N_2^2 t^{-2\alpha \vartheta } \norm{f}^2_{\mathbb{H}^{\nu+(1- \vartheta)}(\Omega)}, 
	\end{align*}
	where $\mathcal N_2$ is given by   \textbf{(AP.4.)} in the Appendix. This directly implies the inequality (\ref{hhhh1A}). Let us proceed to prove the convergence (\ref{hhhh1B}). 
	By the fact that $E_{\alpha,\alpha}(-z)\lesssim (1+z^2)^{-1}$ for all $z\ge 0$, see e.g. \cite{Samko,Podlubny,Diethelm}, one can apply the same techniques as (\ref{lem2aa}) to show the following inequalities 
	\begin{align}
	\left| \frac{ E_{\alpha ,\alpha  }(-\lambda_j r^\alpha )}{E_{\alpha ,1}(-\lambda_j  T^\alpha )} \right| \lesssim \frac{1+\lambda_j T^\alpha}{\left[1+(\lambda_j r^\alpha)^2\right]^{1-\frac{1-\vartheta }{2}}} \lesssim \lambda_j \left[ (\lambda_j r^\alpha)^2\right]^{ \frac{1-\vartheta }{2}-1}, \label{donvo}
	\end{align}
	where $0<(1-\vartheta)/2<(1-\mu )/2<1$. 
	Hence,  we derive that   
	\begin{align*}
	\Big\| {\bf B}_\alpha(\widetilde t,T) f  - {\bf B}_\alpha(t,T)   f  \Big\|_{\mathbb{H}^{\nu}(\Omega)}  &\le \int_{t}^{\widetilde t} r^{\alpha -1} \left\| \sum_{j=1}^\infty \lambda_j   \frac{ E_{\alpha ,\alpha  }(-\lambda_j r^\alpha )}{E_{\alpha ,1}(-\lambda_j  T^\alpha )} f_j \varphi_j \right\|_{\mathbb{H}^{\nu}(\Omega)}  dr,  \nn\\
	&\lesssim \|f\|_{\mathbb{H}^{\nu+(1- \vartheta) }(\Omega)} \int_{t}^{\widetilde t} r^{-\alpha\vartheta  -1}   dr.
	\end{align*}
	Since the integral in the above inequality tends to zero as $t$ approaches $\widetilde t$ from the right, we obtain (\ref{hhhh1B}) and finish the proof of Part (a).
	
	{\underline {\it Proof of Part (b)}. }\\
	We divide this proof into two parts as follows.\\
	\noindent {\it Step  1.} Prove the inequality (\ref{hhhh2A}).   It follows from $E_{\alpha,\alpha}(-\lambda_j(t-r)^\alpha) \le M_\alpha \lambda_j^{-\mu } (t-r)^{-\alpha \mu  }$ that 
	\begin{align}\label{loo1}
	&\left\| \int_0^t {\bf  P}_{\alpha }  (t-r) G(r,w(r)) dr \right\|_{\mathbb{H}^{\nu}(\Omega)} \nn\\
	&\quad \quad \quad \quad \quad \quad \le \int_0^t (t-r)^{\alpha-1} \left\| \sum_{j=1}^\infty E_{\alpha,\alpha}(-\lambda_j(t-r)^\alpha) G_j(w(r)) \varphi_j  \right\|_{\mathbb{H}^{\nu}(\Omega)} dr\nn\\
	&\quad \quad \quad \quad \quad \quad \le   M_\alpha \int_0^t (t-r)^{\alpha(1-\mu )-1} \left\| G(r,w(r))  \right\|_{\mathbb{H}^{{\sigma} }(\Omega)} dr, 
	\end{align}
	where $\sigma=\nu-\mu$. 		Since $w \in 	\mathfrak X_{\al,\vartheta, \nu,T } (\mathcal R) $ , we see    that   $\|w(r)\|_{\mathbb{H}^{\nu}(\Omega)} \le \mathcal R r^{-\alpha\vartheta  }$. Thus, we have in view of \eqref{H3} that
	$ 
	\left\| G(r,w(r))  \right\|_{\mathbb{H}^{{\sigma} }(\Omega)}\le {L}_3 (r)  \Big( 1+\|w(r)\|^s_{\mathbb{H}^{\nu}(\Omega)} \Big)\|w(r)\|_{\mathbb{H}^{\nu}(\Omega)}.$ 
	It follows from \eqref{loo1} that
	\begin{align}
	\left\| \int_0^t { \bf P}_{\alpha }  (t-r) G(r,w(r)) dr \right\|_{\mathbb{H}^{\nu}(\Omega)}  
	\le  M_\alpha \|w\|_{  C^{\alpha \vartheta }((0,T];\mathbb{H}^{\nu}(\Omega))} \widehat{{L}}_3(t)  \label{chieuCN}
	\end{align}
	where
	\begin{align}
	\widehat{{L}}_3(t):= \int_0^t (t-r)^{\alpha(1-\mu )-1}  \left(r^{-\alpha \vartheta  } + {\mathcal R}^{s}r^{-(1+s)\alpha \vartheta } \right) {{L}}_3(r) dr. \label{truaCN}
	\end{align}
	Our next purpose is to find an upper bound of $	\widehat{{L}}_3(t)$. In order to control this term, we observe from $0<r<T$ that 
	$r^{-\alpha \vartheta  } \le T^{s\alpha \vartheta  } r^{-(1+s)\alpha \vartheta }$, and from $K_0= \|	{L}_3(t) t^{\al \zeta }\|_{L^\infty(0,T)} 
	$ that ${L}_3(r)\le K_0r^{-\alpha\zeta}$ which yields the following estimates
	\begin{align}
	\widehat{{L}}_3(t) &\le    \big( T^{s\alpha \vartheta } + {\mathcal R}^s \big)	\int_0^t (t-r)^{\alpha(1-\mu )-1}    r^{-(1+s)\alpha \vartheta }  {L}_3(r) dr \nn\\
	& \le K_0 \big( T^{s\alpha \vartheta } + {\mathcal R}^s \big) \int_0^t (t-r)^{\alpha(1-\mu )-1}    r^{-\alpha\left((1+s) \vartheta  +\zeta\right)} dr. \nn\ 
	\end{align}
	By noting  
	$\min \Big \{ \alpha(1-\mu )-1;~ -\alpha\left((1+s) \vartheta +\zeta\right) \Big\} >-1$ as $0<\mu<1$, $\zeta<\alpha^{-1}-(1+s)\vartheta$, and using \textbf{(AP.1.)} in the Appendix, 
	we find  that
	\begin{equation}
	\int_0^t (t-r)^{\alpha(1-\mu )-1}    r^{-\alpha\left((1+s) \vartheta  +\zeta\right)} dr \le \mathscr N_1  t^{\alpha\left((1 - \mu )-(1+s) \vartheta -\zeta \right)}. \nn
	\end{equation}
	This implies that
	\begin{align}
	\widehat{L}_3(t)  \le
	K_0 \big( T^{s\alpha \vartheta } + {\mathcal R}^s \big) \mathscr N_1  T^{\alpha\left((1-\mu )-s \vartheta -\zeta\right)} ~ t^{-\alpha \vartheta }, \nn
	\end{align}
	where we have noted that  $\zeta \le  (1 -\mu   )- s \vartheta$ since $\zeta<\alpha^{-1}-\vartheta-s\vartheta\le (1-\mu)-s\vartheta$ as $\alpha^{-1}<1$ and $\vartheta>\mu$. 
	The latter estimate together with   (\ref{chieuCN}) and (\ref{truaCN})  that
	\begin{align}
	\left\| \int_0^t {\bf  P}_{\alpha }  (t-r) G(r,w(r)) dr \right\|_{\mathbb{H}^{\nu}(\Omega)}  
	\,&\le  \mathscr N_2 K_0\big( T^{s\alpha \vartheta } + \mathcal R^s \big) t^{-\alpha \vartheta  } \big\|w\big\|_{ C^{\alpha \vartheta  }((0,T];\mathbb{H}^{\nu}(\Omega))}      ,\nn
	\end{align}
	where we recall that $\mathscr N_2$ is given by \textbf{(AP.4)} in the Appendix. 
	
	\vspace*{0.2cm}
	
	\noindent {\it Step  2.} Show that  (\ref{hhhh2B}) holds. 
	By dealing  with $\left\| G(r,w(r))  \right\|_{\mathbb{H}^{{\sigma}}(\Omega)}$ using   the same arguments  in   Step 1, we derive that
	\begin{align}
	\|\mathfrak M_2 \|_{\mathbb{H}^{\nu}(\Omega)} \,& \le  \int_0^t \int_{t-r}^{\widetilde t-r}   \rho^{\alpha  -2} \left\| \sum_{j=1}^\infty  E_{\alpha  ,\alpha  -1}(-\lambda_j \rho^\alpha  ) G_j(r,u(r))\varphi_j \right\|_{\mathbb{H}^{\nu}(\Omega)} d\rho  dr   \nn\\
	\,& \lesssim  \int_0^t \int_{t-r}^{\widetilde t-r}    \rho^{\alpha(1-\mu )  -2} \|G(r,u(r))\|_{\mathbb{H}^{{\sigma}}(\Omega)} d\rho  dr   \nn \\
	\,& \lesssim   \int_0^t \int_{t-r}^{\widetilde t-r}   \rho^{\alpha(1-\mu )  -2} \left(\mathcal Rr^{-\alpha \vartheta } +\mathcal R^{1+s}r^{-(1+s)\alpha \vartheta } \right) L_3 (r) d\rho  dr   \nn\\
	\,& \lesssim   \left|  \int_0^t   \left( (\widetilde t-r)^{\alpha(1-\mu )  -1} - (t-r)^{\alpha(1-\mu )  -1} \right)  r^{-\alpha\left((1+s) \vartheta +\zeta\right)}  dr \right|  , \nn 
	\end{align} 
	where $\mathfrak{M}_2$ is formulated by (\ref{b1}). 	 By the fact that $\alpha(1-\mu )>0$ and $1-\alpha\left((1+s) \vartheta  +\zeta\right)>0$ and  using  \textbf{(AP.2.)} in the Appendix, we know that the right hand-side of the latter inequality   tends to zero,  as $\widetilde t$ approaches $t$.   Hence, $	\|\mathfrak M_2 \|_{\mathbb{H}^{\nu}(\Omega)}  \xrightarrow{\widetilde t \to t } 0.$
	Now, in the same way as above, we obtain  
	\begin{align} \label{loo2}
	\|\mathfrak M_3\|_{\mathbb{H}^{\nu}(\Omega)} 
	&\le  \int_t^{\widetilde t} (\widetilde t-r)^{\alpha-1}  \left\| \sum_{j=1}^\infty     	 E_{\alpha,\alpha}(-\lambda_j (\widetilde{t}-\tau)^\alpha) G_j(r,u(r)) \varphi_j   \right\|_{\mathbb{H}^{\nu}(\Omega)}  dr \nn\\
	&\lesssim  \int_t^{\widetilde t} (\widetilde t-r)^{\alpha(1-\mu )-1}  \|G(r,u(r))\|_{\mathbb{H}^{{\sigma}}(\Omega)}  dr\nn\\ &\lesssim \int_t^{\widetilde t} (\widetilde t-r)^{\alpha(1-\mu )-1}  r^{-\alpha\left((1+s) \vartheta +\zeta\right)}  dr,
	\end{align}  
	where $\mathfrak{M}_3$ is formulated by (\ref{b1}).	From  that $(\widetilde t-r)^{\alpha(1-\vartheta) } \le (\widetilde t-t)^{\alpha(1-\vartheta)}$ as $t\le r \le \widetilde t$, we bound the right hand-side of   \eqref{loo2} as follows
	\begin{align}
	\text{	(RHS) of } \eqref{loo2} \,&\le (\widetilde t-t)^{\alpha(1-\vartheta)} \int_0^{\widetilde t} (\widetilde t-r)^{\alpha( \vartheta-\mu  )-1}  r^{-\alpha\left((1+s) \vartheta +\zeta\right)}  dr \nn\\
	\,&\lesssim (\widetilde t-t)^{\alpha(1-\vartheta)} \int_0^{\widetilde t} (\widetilde t-r)^{\alpha(\vartheta -\mu   )-1}  r^{-\alpha\left((1+s) \vartheta +\zeta\right)}  dr, \nn
	\end{align}
	Noting that $\alpha(\vartheta-\mu  )>0$ and  $1-\alpha\left((1+s)\vartheta+\zeta\right)>0$ , we ensure that $\int_0^{\widetilde t} (\widetilde t-r)^{\alpha(\vartheta -\mu   )-1}  r^{-\alpha\left((1+s) \vartheta +\zeta\right)}  dr$ is convergent.   The above observations imply that 
	$	\|\mathfrak M_3\|_{\mathbb{H}^{\nu}(\Omega)}  \xrightarrow{\widetilde  t \to t } 0.$
	Since    $$ \int_0^{\widetilde t }   {\bf  P}_{\alpha }  (\widetilde t-r) G(r,w(r)) dr - \int_0^{t} { \bf P}_{\alpha }  (t-r) G(r,w(r)) dr =\mathfrak M_2+\mathfrak M_3,$$ we finish this step.
	
	\vspace*{0.2cm}
	
	\noindent 	{\underline {\it Proof of Part (c)}. }
	\noindent	In view of $0\le 1+[(\nu-\sigma)-\vartheta]\le 1$, one can see that
	\begin{align*}
	\,&\left\| \int_0^t  { \bf P}_{\alpha }  (t-r) G(r,w(r)) dr \right\|_{\mathbb{H}^{\nu+(1-\vartheta)}(\Omega)}  \nn\\
	& \hspace*{3cm}	\le  M_\alpha \int_0^t (t-r)^{\alpha( \vartheta-\mu  )-1} \left\| G(r,w(r))  \right\|_{\mathbb{H}^{{\sigma} }(\Omega)} dr \nn\\
	&	\hspace*{3cm} \le M_\alpha K_0(T^{s\alpha\vartheta}+\mathcal R^s) \|w\|_{ C^{\alpha \vartheta }((0,T];\mathbb{H}^{\nu}(\Omega))} \int_0^t (t-r)^{\alpha( \vartheta-\mu )-1} r^{-\alpha\left((1+s) \vartheta +\zeta\right)} dr \nn\\
	& \hspace*{3cm} \le	M_\alpha K_0(T^{s\alpha\vartheta}+\mathcal R^s) \|w\|_{ C^{\alpha \vartheta }((0,T];\mathbb{H}^{\nu}(\Omega))} \mathscr N_1  t^{\alpha\left(( \vartheta-\mu   )-(1+s) \vartheta -\zeta \right)}  \nn\\
	& \hspace*{3cm} \le   \mathscr N_2 K_0 \big( T^{s\alpha \vartheta } + \mathcal R^s \big)  t^{-\alpha \vartheta } \|w\|_{ C^{\alpha \vartheta }((0,T];\mathbb{H}^{\nu}(\Omega))},
	\end{align*} 	where we also recall that $\mathscr N_2$ is given by \textbf{(AP.4)} in the Appendix.  
	This   completes the proof.
\end{proof}

\begin{proof}[Proof of Theorem \ref{locally1}]  
	
	The proof will be based on a contraction mapping theorem on a Banach space. For this purpose, let us define the mapping $$\mathscr Q: 	\mathfrak X_{\al,\vartheta, \nu,T }(\mathcal R)  \longrightarrow 	\mathfrak X_{\al,\vartheta, \nu,T }(\mathcal R) $$ given by
	\begin{equation}\label{fixedpoint}
	\mathscr Qw= 	   {\bf B}_\alpha(t,  T) f + \int_0^t  {\bf B}_\alpha  (t-r) G(r,w(r)) dr - \int_0^{ T}    {\bf  B}_{\alpha }(t,  T)    {\bf  P}_{\alpha } (T-r) G(r,w(r)) dr.
	\end{equation}
	Since $f\in \mathbb{H}^{\nu+(1- \vartheta)}(\Omega)$, the convergence (\ref{hhhh1B}) in Part a of Lemma \ref{locally} yields that the first term  of $\mathscr Q$ is time-continuous for all  $0<t\le T$. The estimate (\ref{hhhh1A}) means that this term belongs to $ C^{\alpha \vartheta }((0,T]; \mathbb{H}^{\nu}(\Omega))$. Similarly, we observe from $G$ satisfying assumption $(\mathscr H_3)$  and the estimate (\ref{hhhh2A}),  the convergence (\ref{hhhh2B}) in in Part b of Lemma \ref{locally} that the second term of $\mathscr Q$  belongs to $ C^{\alpha \vartheta }((0,T];\mathbb{H}^{\nu}(\Omega))$. On the other hand,  using    Part c of Lemma \ref{locally} shows that the integral   
	$  \int_0^T  { \bf P}_{\alpha }  (T-r) G(r,w(r)) dr $ belongs to $ \mathbb{H}^{\nu+(1- \vartheta)}(\Omega)$ , so we deduce from Part a of Lemma \ref{locally} that
	\begin{equation}
	{\bf B}_\alpha(t,T)\int_0^T  {\bf  P}_{\alpha }  (T-r) G(r,w(r)) dr ~~~\text{belongs to }~~~  C^{\alpha \vartheta  }((0,T]; \mathbb{H}^{\nu}(\Omega) ).
	\end{equation}
	Therefore, the last term of $\mathscr Q$ also belongs to $ C^{\alpha \vartheta }((0,T];\mathbb{H}^{\nu}(\Omega))$.  
	
	\vspace*{0.2cm}
	
	\underline{Prove $\mathscr Q$   maps $\mathfrak X_{\al,\vartheta, \nu,T }(\mathcal R)$ into itself:} 
	Indeed, let $w^\dagger$, $w^\ddagger$ belong to  the space $\mathfrak X_{\al,\vartheta, \nu,T }(\mathcal R)$, then using the formula  \eqref{fixedpoint}  implies the following chain of   estimates 
	\begin{align} \label{fixedpoint}
	\,& t^{ \alpha \vartheta} \left\|\mathscr Q w^\dagger(t) - \mathscr Q w^\ddagger(t) \right\|_{\mathbb{H}^{\nu}(\Omega)} \nn\\
	\,& \hspace*{3cm} \le t^{ \alpha \vartheta }\left\| \int_0^t {\bf  P}_{\alpha }  (t-r) \Big( G(r,w^\dagger(r)) - G(r,w^\ddagger(r))\Big) dr \right\|_{\mathbb{H}^{\nu}(\Omega)} \nn\\
	\,&\hspace*{3cm} + t^{ \alpha \vartheta }\left\| {\bf B}_\alpha(t,T)\int_0^T  { \bf P}_{\alpha }  (T-r) \Big( G(r,w^\dagger(r)) - G(r,w^\ddagger(r))\Big) dr \right\|_{\mathbb{H}^{\nu}(\Omega)} \nn\\
	\,& \hspace*{3cm} \le \mathscr N_2 K_0 (T^{s\alpha\vartheta}+\mathcal R^s)  \left\|w^\dagger-w^\ddagger \right\|_{ C^{\alpha \vartheta }((0,T]; \mathbb{H}^{\nu}(\Omega))} \nn\\
	\,&\hspace*{3cm}  +  \mathcal N_2     \left\| \int_0^T  {\bf  P}_{\alpha }  (T-r) \Big( G(r,w^\dagger(r)) - G(r,w^\ddagger(r))\Big) dr \right\|_{\mathbb{H}^{\nu+(1-\vartheta)}(\Omega)} \nn \\
	\,& \hspace*{3 cm}  \le \overline{\mathscr N_2} K_0   (T^{s\alpha\vartheta}+\mathcal R^s)    \|w^\dagger-w^\ddagger\|_{  C^{\alpha \vartheta }((0,T];\mathbb{H}^{\nu}(\Omega))},\nn
	\end{align}
	where on the right hand-side of \eqref{fixedpoint}, 
	we have used  the inequalities (\ref{hhhh1A}) of  Lemma \ref{locally}, (\ref{hhhh2A}) of Lemma  \ref{locally} in the first  estimate, and the inequality (\ref{hhhh22A}) of Lemma \ref{locally} in the second estimate. 
	This implies that
	\begin{equation}
	\left\|\mathscr Q w^\dagger - \mathscr Q w^\ddagger \right\|_{  C^{\alpha \vartheta }((0,T];\mathbb{H}^{\nu}(\Omega))}	\le 	\overline{\mathscr N_2} K_0   (T^{s\alpha\vartheta}+\mathcal R^s)  \|w^\dagger-w^\ddagger\|_{  C^{\alpha \vartheta }((0,T];\mathbb{H}^{\nu}(\Omega))}. \label{ngaymoia}
	\end{equation}
	By letting  $w^\ddagger=0$ into the latter equality and noting that $\mathscr Q \overline w^\ddagger (t)=   {\bf B}_\alpha(t,  T) f$ if $w^\ddagger=0$ , we derive  
	\begin{equation}
	\left\|\mathscr Q w^\dagger -  {\bf B}_\alpha(t,  T) f \right\|_{  C^{\alpha \vartheta }((0,T];\mathbb{H}^{\nu}(\Omega))} \le 	\overline{\mathscr N_2} K_0   (T^{s\alpha\vartheta}+\mathcal R^s)  \|w^\dagger\|_{  C^{\alpha \vartheta }((0,T];\mathbb{H}^{\nu}(\Omega))}. \nn
	\end{equation}
	From \eqref{hhhh1A} and using the triangle inequality, we know that 
	\begin{align}
	\left\|\mathscr Q w^\dagger \right\|_{  C^{\alpha \vartheta }((0,T];\mathbb{H}^{\nu}(\Omega))}  &\le 
	\left\|\mathscr Q w^\dagger -  {\bf B}_\alpha(t,  T) f \right\|_{  C^{\alpha \vartheta }((0,T];\mathbb{H}^{\nu}(\Omega))} + \sup_{0\le t \le T} t^{\alpha\vartheta} \|{\bf B}_\alpha(t,  T) f \|_{\mathbb{H}^{\nu }(\Omega)}\nn\\
	&\le 	\overline{\mathscr N_2} K_0   (T^{s\alpha\vartheta}+\mathcal R^s)  \|w^\dagger\|_{  C^{\alpha \vartheta }((0,T];\mathbb{H}^{\nu}(\Omega))} + \mathcal N_2   \|f\|_{\mathbb{H}^{\nu+( 1-\vartheta) }(\Omega)} .\nn
	\end{align}	
	Since $ w^\dagger \in \mathfrak X_{\al,\vartheta, \nu,T }(\mathcal R) $, we have $\|w^\dagger\|_{  C^{\alpha \vartheta }((0,T];\mathbb{H}^{\nu}(\Omega))}  \le \mathcal R $. It implies that  
	\begin{align}
	\left\|\mathscr Q w^\dagger \right\|_{  C^{\alpha \vartheta }((0,T];\mathbb{H}^{\nu}(\Omega))}  \le \underbrace{  \overline{\mathscr N_2} K_0   (T^{s\alpha\vartheta}+\mathcal R^s) \mathcal R +\mathcal N_2   \|f\|_{\mathbb{H}^{\nu+( 1-\vartheta) }(\Omega)} }_{:=\pi(\mathcal R)} .\label{ngaymoi}
	\end{align}

	Due to the assumption $ K_0T^{s\alpha \vartheta} \in \big(  0;\min\big\{\frac{1}{2}\overline{\mathscr N_2}^{\,-1};\mathcal N_f\big\}  \big),$ we now show that there exists $0< \overline{\mathcal R}<\widehat{\mathcal R}$ which is a solution to the equation $\pi(\mathcal R)=\mathcal R$, where we denote by the constant
	$$\widehat{\mathcal R}:=\left(\frac{1-\overline{\mathscr N_2} K_0T^{s\alpha \vartheta} }{(1+s)\overline{\mathscr N_2} K_0}\right)^{1/s}. $$
	We note that  the function $\mathcal R \mapsto \widehat{\pi}(\mathcal R):= \pi(\mathcal R)-\mathcal R$ is continuous on  $(0;\widehat{\mathcal R})$ with the   values  $\widehat{\pi}(0)=\mathcal N_2   \|f\|_{\mathbb{H}^{\nu+( 1-\vartheta) }(\Omega)}$ and 
	\begin{align*}
	\widehat{\pi}(\widehat{\mathcal R})\,&=\overline{\mathscr N_2} K_0   (T^{s\alpha\vartheta}+\widehat{\mathcal R}^s) \widehat{\mathcal R} +\mathcal N_2   \|f\|_{\mathbb{H}^{\nu+( 1-\vartheta) }(\Omega)} - \widehat{\mathcal R} \nn\\
	\,&= \Big( \overline{\mathscr N_2} K_0    \widehat{\mathcal R}^s    - (1-\overline{\mathscr N_2} K_0 T^{s\alpha\vartheta} )\Big) \widehat{\mathcal R} +\mathcal N_2   \|f\|_{\mathbb{H}^{\nu+( 1-\vartheta) }(\Omega)}  \nn\\
	\,&= \left(1-\overline{\mathscr N_2} K_0T^{s\alpha \vartheta}\right)\left( \frac{1}{1+s} -1 \right)\widehat{\mathcal R} + \mathcal N_2   \|f\|_{\mathbb{H}^{\nu+( 1-\vartheta) }(\Omega)} \nn\\
	\,&=    \mathcal N_2   \|f\|_{\mathbb{H}^{\nu+( 1-\vartheta) }(\Omega)} - \frac{s}{1+s}\left(1-\overline{\mathscr N_2} K_0T^{s\alpha \vartheta}\right)\widehat{\mathcal R} \nn\\
	\,&=    \mathcal N_2   \|f\|_{\mathbb{H}^{\nu+( 1-\vartheta) }(\Omega)} - \frac{s}{1+s} \frac{\left(1-\overline{\mathscr N_2} K_0T^{s\alpha \vartheta}\right)^{1+1/s}}{(1+s)^{1/s} (\overline{\mathscr N_2} K_0)^{1/s} }  \nn\\
	\,& <  \mathcal N_2   \|f\|_{\mathbb{H}^{\nu+( 1-\vartheta) }(\Omega)} \left(1 - \frac{s}{1+s} \frac{\left(1/2\right)^{1+1/s}}{(1+s)^{1/s} s  }   (2(1+s))^{1+1/s} \right) \nn\\
	\,& = 0,
	\end{align*}
	where we note that $1-\overline{\mathscr N_2} K_0T^{s\alpha \vartheta}>\frac{1}{2}$. Therefore, there exists $0< \overline{\mathcal R}<\widehat{\mathcal R}$ such that $\pi(\mathcal R)=\mathcal R$. So it follows from (\ref{ngaymoi}) that  $\mathscr Q$   maps $\mathfrak X_{\al,\vartheta, \nu,T }(\mathcal R)$ into itself.\\

	\underline{Prove $\mathscr Q$   is a contraction mapping, then establish the existence of the mild solution:} We note that  
	\begin{align*}
	\overline{\mathscr N_2} K_0   (T^{s\alpha\vartheta}+\widehat{\mathcal R}^s)\,&= \overline{\mathscr N_2} K_0   \left( T^{s\alpha\vartheta} +  \frac{1-\overline{\mathscr N_2} K_0T^{s\alpha \vartheta} }{(1+s)\overline{\mathscr N_2} K_0} \right) \nn\\
	\,&= \frac{1-\overline{\mathscr N_2} K_0T^{s\alpha \vartheta}}{1+s} - \left(1-\overline{\mathscr N_2} K_0T^{s\alpha \vartheta}\right) +1 \nn\\
	\,&= 1- \frac{s}{1+s}   \left(1-\overline{\mathscr N_2} K_0T^{s\alpha \vartheta}\right) 
 < \frac{2+s}{2+2s}.
	\end{align*}
	Hence, we can deduce from  (\ref{ngaymoia}) that  
	\begin{align}
	\left\|\mathscr Q w^\dagger - \mathscr Q w^\ddagger \right\|_{  C^{\alpha \vartheta }((0,T];\mathbb{H}^{\nu}(\Omega))}	\,&\le 	\overline{\mathscr N_2} K_0   (T^{s\alpha\vartheta}+\widehat{\mathcal R}^s)  \|w^\dagger-w^\ddagger\|_{  C^{\alpha \vartheta }((0,T];\mathbb{H}^{\nu}(\Omega))} \nn\\
	\,&\le 	\frac{2+s}{2+2s}  \|w^\dagger-w^\ddagger\|_{  C^{\alpha \vartheta }((0,T];\mathbb{H}^{\nu}(\Omega))}. \nn  
	\end{align}
	We imply that $\mathscr Q$ is a contraction mapping on $ \mathfrak X_{\al,\vartheta, \nu,T }(\mathcal R) $ which has a unique fixed point $u$ in this space. This fixed point is the unique  mild solution of Problem (\ref{mainprob1})-(\ref{mainprob4}). In addition,   inequality (\ref{ubound}) can be easily obtained. The remain of the proof is split as the following steps. 
	
	\vspace*{0.2cm}
	
	\noindent \textbf{Part a)} \underline{Show that $ u \in   L^p (0,    T ;  \mathbb H^{\nu+(\vartheta'-\vartheta)} (\Omega))$ for all $1\le p < \frac{1}{\alpha\vartheta'}$:}

\noindent	It is easy to see   the estimate $\displaystyle \left\| {\bf B}_\alpha(t,T)f \right\|_{\mathbb H^{\nu+(\vartheta'-\vartheta)} (\Omega)} \lesssim t^{-\alpha\vartheta'} \left\|f \right\|_{\mathbb{H}^{\nu+(1-\vartheta)}(\Omega)} $ for all $t>0$. Moreover, by  applying  Lemma \ref{locally}, we obtain    
	\begin{align*}
	&\left\| {\bf B}_\alpha(t,T) \int_0^T  { \bf P}_{\alpha }  (T-r) G(r,u(r)) dr \right\|_{\mathbb H^{\nu+(\vartheta'-\vartheta)} (\Omega)}  \nn\\
	&\quad \quad \quad \quad \lesssim  t^{-\alpha\vartheta'} \left\|\int_0^T  { \bf P}_{\alpha }  (T-r) G(r,u(r)) dr\right\|_{\mathbb{H}^{\nu+(1-\vartheta)}(\Omega)}   \lesssim t^{-\alpha\vartheta'} \left\|f \right\|_{\mathbb{H}^{\nu+(1-\vartheta)}(\Omega)}.
	\end{align*}
	On the other hand, it follows from $\nu+(\vartheta'-\vartheta)\le\nu+( 1- \vartheta)$ that the Sobolev embedding $\mathbb{H}^{\nu+( 1- \vartheta)}(\Omega) \hookrightarrow \mathbb{H}^{\nu+(\vartheta'-\vartheta)}(\Omega)$ holds. Hence, we can infer from  Lemma \ref{locally}  that    
	\begin{align}
	\left\|\int_0^t  {\bf  P}_{\alpha }  (t-r) G(r,u(r)) dr\right\|_{\mathbb{H}^{\nu+(\vartheta'-\vartheta)}(\Omega)} &\lesssim  \left\|\int_0^t  { \bf P}_{\alpha }  (t-r) G(r,u(r)) dr\right\|_{\mathbb{H}^{\nu+( 1- \vartheta)}(\Omega)} \nn\\ 
	&\lesssim   t^{-\alpha \vartheta } \left\|f \right\|_{\mathbb{H}^{\nu+( 1- \vartheta)}(\Omega)}  \lesssim  t^{-\alpha \vartheta' } \left\|f \right\|_{\mathbb{H}^{\nu+( 1- \vartheta)}(\Omega)} .  \nn
	\end{align}
	Summarily, the solution $u\in  L^p (0,    T ;  \mathbb H^{\nu+(\vartheta'-\vartheta)} (\Omega))$ for all $1\le p < \dfrac{1}{\alpha\vartheta'}$ since $t^{-\alpha \vartheta' }$ clearly belongs to $L^p (0,    T ;  \mathbb R)$ 
	for all $1\le p < \dfrac{1}{\alpha\vartheta'}$. The proof is  finalized.  
	
	\vspace*{0.2cm}
	
	\noindent \textbf{Part b)} \underline{Show that $ u \in   C\left([0,T];\mathbb H^{\nu -\eta} (\Omega)\right)$:}
	Let  $t$, $t'$ such that $0\le t \le  \widetilde t\le T$. Our purpose here is to find an upper bound of  the norm $\Big\| u(\widetilde t)  -u(t) \Big\|_{\mathbb{H} ^{\nu-\eta}(\Omega)}   $  .  Since $\vartheta<\eta \le \vartheta+1$ and $0<\vartheta<1$, the number $\dfrac{1+\vartheta-\eta}{2}$  consequently belongs to $[0,1]$. Hence, replacing $\displaystyle 1-\frac{1-\vartheta }{2}$ by $\dfrac{1+\vartheta-\eta}{2}$ helps to improve the inequalities (\ref{donvo}). Indeed, we have   
	\begin{align}
	\left| \frac{ E_{\alpha ,\alpha  }(-\lambda_j r^\alpha )}{E_{\alpha ,1}(-\lambda_j  T^\alpha )} \right|  \lesssim  r^{\alpha(\eta-\vartheta-1)} \lambda_j^{\eta-\vartheta}.  
	\end{align}
	As a consequence of the above inequality, we have 
	\begin{align*}
	\Big\| {\bf B}_\alpha(\widetilde t,T) f  - {\bf B}_\alpha(t,T)   f  \Big\|_{\mathbb{H} ^{\nu-\eta}(\Omega)}  \,&\lesssim \left\|f \right\|_{\mathbb{H}^{\nu+(1-\vartheta)}(\Omega)} \int_{t}^{\widetilde t} r^{\alpha(\eta-\vartheta) -1} dr     \nn\\
	\,& \lesssim \left\|f \right\|_{\mathbb{H}^{\nu+(1-\vartheta)}(\Omega)}  \left\{
	\begin{array}{llllcccc}
	(\widetilde t-t)^{\alpha(\eta-\vartheta)}\mathbf{1}_{0<\alpha(\eta-\vartheta)\le 1} \vspace*{0.1cm} \\
	\left( 	(\widetilde t-t)^{\alpha(\eta-\vartheta)-1}+\gamma 
	\right)\mathbf{1}_{1<\alpha(\eta-\vartheta)< 2} \\
	\end{array} \right\},
	\end{align*}
	where the number $\alpha(\eta-\vartheta)$ includes in $(0;2)$. Employing   Lemma \ref{locally} allows that
	\begin{align*}
	\big\| \mathfrak M_4 \big\|_{\mathbb{H} ^{\nu-\eta}(\Omega)}   
	\,& \lesssim \left\|\int_0^T \mathscr { P}_{\alpha }  (T-r) G(r,u(r)) dr\right\|_{\mathbb{H}^{\nu+(1-\vartheta)}(\Omega)}  \left\{
	\begin{array}{llllcccc}
	(\widetilde t-t)^{\alpha(\eta-\vartheta)}\mathbf{1}_{0<\alpha(\eta-\vartheta)\le 1} \vspace*{0.1cm} \\
	\left( (\widetilde t-t)^{\alpha(\eta-\vartheta)-1}+\gamma 
	\right)\mathbf{1}_{1<\alpha(\eta-\vartheta)< 2} \\
	\end{array} \right\}  \nn\\
	\,&  \lesssim  \left\|f \right\|_{\mathbb{H}^{\nu+(1-\vartheta)}(\Omega)} \left\{
	\begin{array}{llllcccc}
	(\widetilde t-t)^{\alpha(\eta-\vartheta)}\mathbf{1}_{0<\alpha(\eta-\vartheta)\le 1} \vspace*{0.1cm} \\
	\left( (\widetilde t-t)^{\alpha(\eta-\vartheta)-1}+\gamma 
	\right)\mathbf{1}_{1<\alpha(\eta-\vartheta)< 2} \\
	\end{array} \right\} , \nn
	\end{align*}
	provided that notation $\mathfrak M_4 $ is given by (\ref{b1}). Now, let us  consider the terms $\mathfrak M_2 $ and $\mathfrak M_3 $. It indicates from $\mu<\vartheta$ and $\vartheta <\eta$ that $0\le -{\sigma}\le\eta-\nu	$, and it results   $\mathbb{H}^{{\sigma}}(\Omega) \hookrightarrow \mathbb{H}^{\nu-\eta}(\Omega)$. This suggests to estimate the term $\mathfrak M_2$. In actual fact, we have 
	\begin{align*}
	\|\widetilde t\|_{\mathbb H^{\nu-\eta} (\Omega)}
	&\lesssim    \int_0^t \int_{t-r}^{t'-r} \rho ^{\alpha  -2}  \norm{G(r,u(r))}_{ \mathbb H^{{\sigma}} (\Omega)} d\rho  dr \\
	&\lesssim  \left\|f \right\|_{\mathbb{H}^{\nu+(1-\vartheta)}(\Omega)}  \int_0^t \int_{t-r}^{\widetilde t-r} \rho ^{\alpha  -2}  \left(r^{-\alpha \vartheta  } +  r^{-(1+s)\alpha \vartheta } \right)\mathfrak{L}_3 (r)  d\rho  dr\\
	&\lesssim  \left\|f \right\|_{\mathbb{H}^{\nu+(1-\vartheta)}(\Omega)}  (\widetilde t-t)^{\alpha-1} \int_0^t   \left(r^{-\alpha \left( \vartheta +\zeta \right)  } +  r^{- \alpha \left((1+s)  \vartheta + \zeta \right) } \right)  dr,
	\end{align*}
	provided that ${L}_3(t)\le K _0 t^{-\alpha\zeta}$ as $G$ satisfies \eqref{H3}. 
	Since $ \zeta < \displaystyle\frac{1}{\alpha} - (1+s) \vartheta $, we derive that the integral on the right-hand side of the previous
	expression is convergent. Hence, we obtain immediately the estimate
	$$ \|\mathfrak M_2\|_{\mathbb H^{\nu-\eta} (\Omega)}
	\lesssim     \|u\|_{ C^{\alpha \vartheta }((0,T];\mathbb{H}^{\nu}(\Omega))}  (\widetilde t-t)^{\alpha-1} $$
	and from the local  property of $G$ as in  \eqref{H3}, we find that
	\begin{align}
	\|\mathfrak M_3\|_{\mathbb H^{\nu-\eta} (\Omega)} 
	\,&\lesssim  \int_t^{\widetilde t} (\widetilde t-\tau)^{\alpha-1}  \|G(r,u(r))\|_{\mathbb{H}^{{\sigma}}(\Omega)}  dr \nn\\
	\,&\lesssim \left\|f \right\|_{\mathbb{H}^{\nu+(1-\vartheta)}(\Omega)} \int_t^{\widetilde t} (\widetilde t-r)^{\alpha-1} \left(r^{-\alpha \vartheta  } +  r^{-(1+s)\alpha \vartheta } \right)\mathfrak{L}_3 (r)  dr    \nn\\
	&\lesssim \left\|f \right\|_{\mathbb{H}^{\nu+(1-\vartheta)}(\Omega)} \int_t^{\widetilde t} (\widetilde t-r)^{\alpha-1} \left(r^{-\alpha (\vartheta +\zeta) } +  r^{-\alpha((1+s)\vartheta+\zeta) } \right)   dr \lesssim ( \widetilde t-t). \nn
	\end{align}    
	The above explanations imply $ u \in   C\left([0,T];\mathbb H^{\nu -\eta} (\Omega)\right)$. 
\end{proof}
\section*{Appendix}
\subsection*{\textbf{(AP.1.)} A singular integral} It is useful to recall some basic properties of a singular integral. For given $z_1>0$, $z_2>0$, and $0\le a <b\le T$, 
we denote by
\begin{align}
\displaystyle \mathcal K(z_1,z_2,a,b):=\displaystyle \int_{a}^{b} (b-\tau)^{z_1-1}(\tau-a)^{z_2-1}d\tau =(b-a)^{z_1+z_2-1} {\bf B}(z_1,z_2), \label{Kdefined}
\end{align}
where $\bf B$ is the Beta function, 
$  
{\bf B}(z_1,z_2) := \int_0^1 t^{z_1-1}(1-t)^{z_2-1} dt
$.  
Moreover, a special case of the Beta function is  ${\bf B}(z,1-z) = \pi/\sin(\pi z)$, see e.g. \cite{Samko,Podlubny,Lax,Brezis,
	Diethelm}. 

\subsection*{\textbf{(AP.2.)} A useful limit} For  $a>0$, $b>0$, $t>0$, $h>0$, the following convergence holds 
$$ \int_0^t (t+h-r)^{a-1}r^{b-1}dr \xrightarrow{h  \to 0^+}  \int_0^t (t-r)^{a-1}r^{b-1}dr .$$
Indeed, it can be proved by noting that $\displaystyle  \int_0^t (t-r)^{a-1}r^{b-1}dr=t^{a+b-1} \mathbf B(a,b)$ and $$\displaystyle \int_0^t   (t+h-r)^{a-1}   r^{b-1}dr = (t+h)^{a+b-1}\int_0^{t/(t+h)} (1-s)^{a-1}s^{b-1} ds  \xrightarrow{h \to 0^+} t^{a+b-1}\mathbf B(a,b). $$

\subsection*{\textbf{(AP.3.)} Proof of Lemma \ref{nayvesom}} The first inequality is estimated as follows
\begin{align}
z^{\al-1} E_{\al, \al} (-\lambda_j z^\al) &\le M_\al z^{\al-1} \frac{1}{1+\lambda_j z^\al}  = M_\al z^{\al-1} \Big(  \frac{1}{1+\lambda_j z^\al}  \Big)^\theta  \Big(  \frac{1}{1+\lambda_j z^\al}  \Big)^{1-\theta}\nn\\
& \le { M}_\al \lambda_j^{-\theta} z^{\al(1-\theta)-1}, \nn
\end{align}
and the second inequality is showed as follows
\begin{align}
\mathscr E_{\al, T} (-\lambda_j t^\al)&=	\left| \frac{E_{\alpha,1}(-\lambda_j t^\al)}{E_{\alpha,1}(-\lambda_j T^\alpha)} \right| \le \frac{ M_\al  }{ m_\al } \left(  \frac{1+\lambda_j T^\alpha} {1+\lambda_j t^\al}\right)^{1-\theta} \left(   \frac{1+\lambda_j T^\alpha}{1+\lambda_j t^\al}\right)^{\theta} \nn\\
&\le \frac{ M_\al  }{ m_\al }   T^{\alpha(1-\theta)}  (1+\lambda_j  T^\alpha)^{\theta}  t^{-\al(1-\theta)}   \nn\\
&\le M_\al m_\al^{-1}  T^{\alpha(1-\theta)} \Big( T^{\al \theta }+\lambda_1^{-\theta} \Big) \lambda_j^\theta t^{-\al(1-\theta)}.\nn
\end{align}
The proof is completed.

\subsection*{\textbf{(AP.4.)} List of  constants}   Here, we list some important constants appeared in this paper, where some of them contain the constant $ C_1(\nu, \theta)$, $C_2(\nu,\sigma)$ in the embeddings (\ref{Embed1}) and (\ref{Embed2}). These constants cannot be omitted in some proofs of this paper.
\begin{align}
\left\{
\begin{array}{llllcccc}
\displaystyle \mathcal M_1 &=& \mathcal M_1(\al,\theta,T) &:=& \displaystyle  M_\al^2  m_\al^{-1}  T^{\alpha(1-\theta)} \Big( T^{\al \theta }+\lambda_1^{-\theta} \Big), \vspace*{0.2cm}  \\
\displaystyle \mathcal M_2 &=& \mathcal M_2(\al,\theta,T) &:=& \displaystyle  \frac{M_\alpha^2}{m_\alpha}
\frac{T^{\alpha(1-\theta)}}{\alpha^2\theta(1-\theta)}(T^\alpha+\lambda_1^{-1})   ,  \vspace*{0.2cm}\\
\displaystyle  \mathscr M_1   &=& \mathscr M_1 (\al,\theta,T) &:=& \displaystyle \frac{\pi M_\al  \lambda_1^{-\theta} T^{\al(1-\theta)    }+\pi \mathcal M_1  }{\sin\Big(\pi\alpha(1-\theta)\Big)}, \vspace*{0.2cm}  \\
\displaystyle \overline{\mathcal M} _1 &=& \overline{\mathcal M} _1(q,\alpha,\nu,\sigma,T) &:=& \displaystyle             C_2(\nu,\sigma)  {M}_\al \frac{T^{\al +1/q}}{\al (\alpha q+1)^{1/q} } , \vspace*{0.2cm} \\
\displaystyle \overline{\mathcal M} _2 &=& \overline{\mathcal M} _2(q,\alpha,\theta,T) &:=& \displaystyle             \frac{T^{\alpha(1-\theta)}( T^{\alpha\theta+1/q}+\lambda_1^{-\theta})  }{(1-\alpha(1-\theta)q)^{1/q} \alpha(1-\theta) } , \vspace*{0.2cm} \\
\displaystyle \overline{\mathcal M} _3 &=& \overline{\mathcal M} _3(\alpha,T) &:=& \displaystyle  \frac{M_\alpha T^\alpha}{\alpha} \left( 1+ \frac{M_\alpha}{m_\alpha}(T^\alpha+\lambda_1^{-1})  \right)  , \vspace*{0.2cm} \\
\displaystyle  \mathscr M_2   &=& \mathscr M_2 (q,\alpha,\nu,\sigma,T) &:=& \displaystyle \|L_2\|_{L^\infty(0,T)} \sum_{1\le j\le3} \overline{\mathcal M}_j , \vspace*{0.2cm}  \\
\displaystyle \mathcal N_{1} &=& \mathcal N_{1}(\al,\theta,\nu,T)  &:=& \displaystyle   \mathcal M_1 M_\al^{-1}   C_1(\nu, \theta) \Big(1-  \|L_1\|_{L^\infty(0,T)}  \mathscr M_1\Big)^{-1} , \vspace*{0.2cm} \\
\mathcal N_2 &=& \mathcal N_2(\alpha,\vartheta,T) &:=& \displaystyle  M_\al   m_\al^{-1}  T^{\alpha\vartheta} \Big( T^{\al (1-\vartheta) }+\lambda_1^{\vartheta-1} \Big) , \vspace*{0.2cm} \\ 
\displaystyle \mathscr N_1  &=& \mathscr N_1(\alpha,\mu,\vartheta ,\zeta) &:=& \displaystyle \max   \Big\{ \mathbf B(\alpha z_j;1-\alpha(1+s) \vartheta  -\alpha\zeta ), j=1,2 \Big\},   ~  \left\{ \hspace*{-0.2cm} \begin{array}{llcc}
z_1= \vartheta -\mu   , \\
z_2=	1 -\mu  ,
\end{array} \right.	   \vspace*{0.2cm} \\
\displaystyle \mathscr N_2  &=& \mathscr N_2(\alpha,\mu ,\vartheta ,\zeta,s,T) &:=& \displaystyle   M_\alpha   \mathscr N_1 \max\Big\{   T^{\alpha\left(z_j -s \vartheta -\zeta\right)}, j=1,2 \Big\} , \vspace*{0.2cm} \\
\displaystyle   \overline{\mathscr N_2}  &=& \overline{\mathscr N_2}(\alpha,\mu ,\vartheta ,\zeta,s,T) &:=& \displaystyle   \mathscr N_2   (  1   + \mathcal N_2T^{-\alpha\vartheta}   ) , \vspace*{0.2cm} \\
\displaystyle   \mathcal N_f  &=& \mathcal N_f(\alpha,\nu,\vartheta,T,s) &:=& \displaystyle  \left( \frac{s}{\mathcal N_2   \|f\|_{\mathbb{H}^{\nu+( 1-\vartheta) }(\Omega)}} \right)^s \frac{1}{(2 +2s)^{1+s}} , \vspace*{0.2cm} \\
\displaystyle \widehat{\mathcal R} &=&  \widehat{\mathcal R}(\alpha,\mu ,\vartheta ,\zeta,s,T) &:=&  \displaystyle\left( \frac{1-\overline{\mathscr N_2} K_0T^{s\alpha \vartheta}}{(1+s)\overline{\mathscr N_2} K_0}  \right)^{1/s}  , \vspace*{0.2cm}\\
\eta_{glo} &=&  \eta_{glo}(\alpha,\theta,\nu')  &:=& \left\{
\begin{array}{llllcccc}
\min\left\{\alpha(\theta +\nu'-1);\alpha-1\right\}\mathbf{1}_{0<\alpha(\theta +\nu'-1)\le 1} \vspace*{0.1cm} \\
\min  \left\{  \alpha(\theta +\nu'-1)-1;\alpha-1 \right\}
\mathbf{1}_{1<\alpha(\theta +\nu'-1)< 2} \\
\end{array} \right\} , \vspace*{0.2cm}\\
\eta_{cri} &=& \eta_{glo}(\alpha,\eta,\vartheta)  &:=& \left\{
\begin{array}{llllcccc}
\min\left\{\alpha(\eta-\vartheta);\alpha-1\right\}\mathbf{1}_{0<\alpha(\eta-\vartheta)\le 1} \vspace*{0.1cm} \\
\min  \left\{  \alpha(\eta-\vartheta)-1;\alpha-1 \right\}
\mathbf{1}_{1<\alpha(\eta-\vartheta)< 2} \\
\end{array} \right\}.  
\end{array}
\right.  \nn
\end{align}

\end{document}